\documentclass[11pt]{ip-journal}
\usepackage[all]{xy}
\usepackage{color}
\usepackage{hyperref}
\usepackage{graphicx}

\newtheorem{Theorem}{Theorem}[section]
\newtheorem{Lemma}[Theorem]{Lemma}
\newtheorem{Corollary}[Theorem]{Corollary}

\newtheorem{Proposition}[Theorem]{Proposition}

\newtheorem{Remark}[Theorem]{Remark}

\def\a{\alpha}
\def \e{\epsilon}
\def \C{{\mathbb C}}

\newcommand{\pair}[1]{\left\langle #1 \right\rangle}
\newcommand{\norm}[1]{\left\|#1 \right\|}
\newcommand{\HOX}[1]{\marginpar{\footnotesize #1}}
\def\inter{\text{\rm int}}
\def\supp{\text{\rm supp}}
\def\diam{\text{\rm diam}}
\def\p{\partial}
\def\S{\mathcal S}
\def\R{\mathbb R}

\def\N{\mathbb N}
\def\Id{\mathop{Id}}

\def\End{\text{\rm End}}

\begin{document}
\title[Inverse problems for the connection Laplacian]{Inverse problems for the connection Laplacian}

\author[Y. Kurylev]{Yaroslav Kurylev}
\address{Department of Mathematics, University College London, Gower Street, London UK, WC1E 6BT.}
\email{y.kurylev@ucl.ac.uk}

\author[L. Oksanen]{Lauri Oksanen}
\address{Department of Mathematics, University College London, Gower Street, London UK, WC1E 6BT.}
\email{l.oksanen@ucl.ac.uk}

\author[G.P. Paternain]{Gabriel P. Paternain}
\address{ Department of Pure Mathematics and Mathematical Statistics,
University of Cambridge,
Cambridge CB3 0WB, UK}
\email {g.p.paternain@dpmms.cam.ac.uk}

%\subjclass{53C25, 53C21, 58F17, 35J15}
%\date{8 September 2015}

\begin{abstract} We reconstruct a Riemannian manifold and a Hermitian vector bundle with compatible connection from the hyperbolic Dirichlet-to-Neumann operator associated with the wave equation of the connection Laplacian.
The boundary data is local and the reconstruction is up to the natural gauge transformations of the problem.
As a corollary we derive an elliptic analogue of the main result which solves a Calder\'on problem for connections on a cylinder.

\end{abstract}

\maketitle

\section{Introduction}

The purpose of the present paper is to show how to reconstruct a Riemannian metric and a Hermitian vector bundle with compatible connection from partial boundary measurements associated with the wave equation of the connection Laplacian (or rough Laplacian). The recovery is possible up to the natural gauges of the problem,
and the proof uses techniques from the Boundary Control method \cite{Belishev1987}.

There is considerable literature on the topic, and we shall review it in due course, but the strength of our results lies in the geometric generality involved:  there are no restrictions on the Riemannian manifold, Hermitian vector bundle or connection. Our methods also include a transparent and direct proof in the case of the trivial vector bundle that avoids gluing of local reconstructions. The problem is motivated by the Aharonov-Bohm effect which asserts that different gauge equivalence classes of electromagnetic potentials have different physical effects that can be detected by experiments. The solution to the inverse problem presented in this paper shows in great generality that different gauge equivalence classes of Hermitian connections (e.g. Yang-Mills
potentials) will have different boundary data and therefore are detectable by boundary measurements.

We proceed to state our results in more detail. Let $(M,g)$ be a smooth, compact, connected Riemannian manifold of dimension $m$ with non-empty boundary 
$\partial M$. 
Let $E\to M$ be a smooth Hermitian vector bundle of rank $n$,
and let us denote by $\pair{\cdot, \cdot}_E$ 
the Hermitian inner product on each fiber.
Let $\nabla$ be a connection compatible with the Hermitian structure, that is,
if we think of $\nabla$ as operating on sections
\[\nabla: C^\infty(M;E)\to C^\infty(M; E\otimes T^*M)\]
then for any pair $u,v \in C^\infty(M;E)$, we have 
\[d\langle u,v\rangle_{E}=\langle \nabla u, v\rangle_{E}+\langle u,  \nabla v \rangle_{E}.\]
Note that both the sides of the above equation are differential forms, that is, sections in $C^\infty(M; T^* M)$.

We can define a natural $L^{2}$-inner product of sections by setting
\[\langle u,v\rangle_{L^{2}(M;E)} =\int_{M}\langle u,v\rangle_{E}\,dx.\]
Here $dx$ is the Riemannian volume measure of $(M,g)$, and
we do not assume that $M$ is oriented.
Similarly we get a natural $L^{2}$-inner product in $C^\infty(M;E\otimes T^*M)$. The elements in
$C^\infty(M;E\otimes T^*M)$ can be thought of as 1-forms taking values in $E$. A pointwise product $\langle \alpha,\beta\rangle_{E}$ is a complex-valued 2-tensor on $M$ which can be contracted with $g$ to obtain a complex-valued function, and then integrated in $M$. In other words, if $\alpha=\alpha_{i}dx^{i}$ and $\beta=\beta_{i}dx^{i}$, then
\[\langle \alpha,\beta\rangle_{L^{2}(M;E\otimes T^*M)} =\int_{M}g^{ij}\langle \alpha_{i},\beta_{j}\rangle_{E}\,dx.\]
We denote by $\nabla^*$ the adjoint of $\nabla$ 
with respect to these $L^2$-inner products, and define the {\it connection Laplacian} as
\[P=\nabla^*\nabla.\]

We denote by $\End(E)$ the vector bundle whose fiber at $x \in M$ is the space of linear maps from the fiber $E_x$ to itself,
and say that a section $V \in C^\infty(M; \End(E))$ is a
{\it potential} if it is symmetric in the sense that for any pair of sections $u,v$ of $E$, 
\begin{align}
\label{symm_V}
\langle u,Vv\rangle_{E}=\langle Vu, v\rangle_{E}.
\end{align}

Let $V$ be a potential and consider the wave equation on sections, % with Dirichlet data $f$, %in $C_{0}^\infty((0,\infty)\times\partial M,E)$, 
\begin{align}
&(\partial_{t}^{2}+P+V)u(t,x)=0,\;\;\;\;\;(0,\infty)\times M,\label{eq:wave}\\
&u|_{(0,\infty)\times\partial M}=f,\;\;\;\;\;\;\;\;\;\;(0,\infty)\times\partial M,\notag\\
&u|_{t=0}=\partial_{t}u|_{t=0}=0,\;\;\;\;\;\;\;\;\;\;\;\;\;\;\;\;\text{in}\;M.\notag
\end{align}
Let $T > 0$, let $\S \subset \p M$ be open, and 
define the restricted Dirichlet-to-Neumann operator
$$
\Lambda_\S^{2T}f = \nabla_{\nu}u|_{(0,2T) \times \S}, \quad f \in C_0^\infty((0,2T) \times \S; E),
$$
where $\nu$ is the interior unit normal on $\p M$
and $u$ is the solution of (\ref{eq:wave}).

Our main result is that, for a sharp time $T > 0$, 
the Hermitian vector bundle $E|_\S$ 
and the restricted Dirichlet-to-Neumann operator $\Lambda_\S^{2T}$
determine the Riemannian manifold $(M, g)$, the Hermitian vector bundle $E$,
the connection $\nabla$ and potential $V$.
Here $E|_\S$ is the pullback bundle $j^* E$
given by the inclusion map $j: \S \to M$.
%and we use the notation $E|_\Gamma$ also for other smooth submanifolds $\Gamma$ of $M$.

\begin{Theorem}
Let $(M_{i},g_{i}, E_{i}, \nabla_{i}, V_i)$, $i=1,2$,
be two smooth Hermitian vector bundles that are defined on smooth, compact and connected Riemannian manifolds with boundary, and that are equipped with smooth Hermitian connections and smooth potentials. Suppose that $T > 0$ and open $\S_i \subset \p M_i$, $i=1,2$, satisfy
\begin{align*}
%\label{assumption_T}
T > \max_{x \in M_i} d_{g_i}(x, \S_i), \quad i=1,2,
\end{align*}
where $d_{g_i}$ is the distance function on $(M_i, g_i)$.
Suppose, furthermore, that
there is a Hermitian vector bundle isomorphism $\phi:E_{1}|_{\S_1} \to E_{2}|_{\S_2}$ intertwining the Dirichlet-to-Neumann
operators
$\Lambda_{\S_{1}}^{2T}$ and $\Lambda_{\S_{2}}^{2T}$, that is, $\phi^*\Lambda_{\S_{2}}^{2T}= \Lambda_{\S_{1}}^{2T}\phi^{*}$. 
Then there is a Hermitian vector bundle isomorphism $\Phi:E_{1} \to E_{2}$ that covers an isometry between $(M_i, g_i)$, $i=1,2$,
and that satisfies $\Phi^*\nabla_{2}=\nabla_{1}$,
$\Phi^*V_{2}=V_{1}$
and $\Phi|_{E_{1}|_{\S_1}} = \phi$. 
\label{thm:maint}
\end{Theorem}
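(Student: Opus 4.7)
The plan is to extend the Boundary Control (BC) method of Belishev and Kurylev to the Hermitian bundle setting. Four ingredients are needed: (i) a Blagoveshchenskii-type identity computing wave inner products from boundary data, (ii) Tataru-type approximate boundary controllability, (iii) reconstruction of the base Riemannian manifold via boundary distance functions, and (iv) reconstruction of the bundle, connection and potential via focused waves and parallel transport.

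For (i), an integration by parts in space-time expresses $\langle u^f(T), u^h(T)\rangle_{L^2(M;E)}$, for $f,h \in C_0^\infty((0,2T)\times\S;E)$, purely in terms of $\Lambda^{2T}_\S$ acting on suitable boundary traces and of the Hermitian structure on $E|_\S$. Combined with the hypothesis $\phi^*\Lambda^{2T}_{\S_2} = \Lambda^{2T}_{\S_1}\phi^*$, this identity equates the $L^2$ inner products on the two sides for $\phi$-matched boundary data. For (ii), Tataru's unique continuation theorem applies to the connection-Laplacian wave operator because its coefficients are smooth and time-independent; it yields density of $\{u^f(T): f \in C_0^\infty(\Gamma;E)\}$ in $L^2(M(\Gamma);E)$, where $M(\Gamma)$ is the double domain of influence of $\Gamma \subset (0, 2T) \times \S$. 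The sharp-time hypothesis $T > \max_{x \in M_i} d_{g_i}(x,\S_i)$ lets one take $\Gamma$ with $M(\Gamma) = M_i$.

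Combining (i) and (ii) one can test purely from boundary data whether $u^f(T)$ is supported in a prescribed domain of influence; a standard BC argument then reconstructs on each side the boundary-distance representation $\mathcal{R}(M_i) = \{r_x : \S_i \to [0,T],\ r_x(z) = d_{g_i}(x,z)\}$ endowed with the sup-metric, which is homeomorphic to $M_i$ and carries an induced metric isometric to $(M_i, g_i)$. Transporting via $\phi$ yields the base isometry $\psi : M_1 \to M_2$ extending $\phi|_{\S_1}$.

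The main novelty is (iv). For a point $x \in M_1$ and a length-minimizing geodesic $\gamma$ from $\S_1$ to $x$, I would construct focusing boundary sources $f_n$ whose waves $u^{f_n}(T)$ localize at $x$ with prescribed fiber value obtained by $\nabla_1$-parallel transport along $\gamma$ of a chosen initial vector in $E_{1,\gamma(0)}$. Applying (i) to the $\phi$-matched sequence on the other side identifies a candidate Hermitian map $\Phi_x : E_{1,x} \to E_{2,\psi(x)}$. The hard part is to show that $\Phi$ is well-defined (independent of $\gamma$), smooth, and intertwines the connections and potentials. Independence of $\gamma$ amounts to matching the holonomies of $\nabla_1$ and $\nabla_2$, which I would extract from the transport equation satisfied by the leading amplitude of the focused waves; the subprincipal amplitude encodes the connection one-form together with the potential, yielding $\Phi^*\nabla_2 = \nabla_1$ and $\Phi^*V_2 = V_1$. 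Smoothness of $\Phi$ then follows from the smooth dependence of focused waves on $x$ and on the initial data, and the boundary normalization $\Phi|_{E_1|_{\S_1}} = \phi$ is built into the construction.
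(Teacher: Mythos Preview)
Your steps (i)--(iii) are essentially the same as the paper's, and the sharp-time approximate controllability is indeed the engine. The gap is in (iv). You propose to recover the bundle map $\Phi$ by reading off parallel transport from the leading amplitude of focused (Gaussian-beam-type) waves, and the connection form and potential from the subprincipal amplitude. But this conflates two different mechanisms. Focusing sequences produced by the BC method (via the Blagoveshchenskii identity and density) do \emph{not} carry parallel-transport information: the fiber value of the limit $u^{f_n}(T)\to e\,\delta_x$ is an arbitrary $e\in E_x$ selected through the minimization, not the $\nabla$-transport of a prescribed boundary vector. Conversely, genuine Gaussian beams do encode parallel transport in their amplitude, but constructing them requires knowing the interior connection, and inverting the map ``boundary data $\mapsto$ parallel transport along geodesics'' is exactly the non-abelian X-ray transform. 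Injectivity of that transform is only known under geometric restrictions (simple/negatively curved manifolds, etc.), so your argument as stated would not give the theorem in full generality; the holonomy-matching you invoke is precisely the hard inverse problem, not something you can ``extract from the transport equation''.

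The paper avoids this entirely. After reconstructing $g$ locally, it uses BC focusing only to build an orthonormal frame $e_\ell(x)$ and to compute, from inner products alone, the functions $x\mapsto \langle e_\ell(x),Wh(x)\rangle_E$; these give the representation of $u^h(T,x)$ in that frame. Time-translation invariance lets one differentiate to obtain $(P+V)u^h$ in the frame, and then the local formula $Pu=d^*du-2(A,du)+(d^*A)u-(A,Au)$ is read off on test sections to recover $A$ and $V$ directly---no geometric optics, no X-ray transform. Globalization is then handled by an iterative continuation-of-data scheme (transposing $L_{\Gamma,B}$, unique continuation to move the ``receivers'' inward) together with a careful gluing of the local trivializations via the inner products; your sketch (``smooth dependence of focused waves'') does not address this, and the paper's Section~4 is where most of the work lies. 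If you want to salvage a geometric-optics route, you would need to restrict to manifolds where the non-abelian X-ray transform is known to be injective.
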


Let us denote by $\pi_i : E_i \to M_i$, $i=1,2$, the associated bundle projections, and recall that a vector bundle isomorphism $\Phi : E_1 \to E_2$ determines a diffeomorphism $\Psi : M_1 \to M_2$ via the equation $$\Psi \circ \pi_1 = \pi_2 \circ \Phi.$$ The isomorphism $\Phi$ covering an isometry means that $\Psi^* g_2 = g_1$.

It is a simple exercise to check that if an isomorphism 
$\Phi$ as in Theorem \ref{thm:maint} exists, 
then the restriction of $\Phi$ on $E_{1}|_{\S_1}$
intertwines the Dirichlet-to-Neumann operators. Hence Theorem \ref{thm:maint} is optimal in terms of the gauge invariances.

We recall that a generalized Laplacian $H$ on $E$
is a differential operator such that its principal symbol 
is 
$$
|\xi|^2 = g^{ij}(x) \xi_i \xi_j, 
\quad (x,\xi) \in T^* M,
$$
and we say that $H$ is symmetric if
$$
\pair{u, Hv}_{L^2(M;E)} = \pair{Hu, v}_{L^2(M;E)},
\quad u,v \in C_0^\infty(M;E).
$$
A symmetric generalized Laplacian $H$ on $E$
can be written in the form $P + V$ for some Hermitian connection $\nabla$
and potential $V$,
see e.g. \cite[Proposition 2.5]{Berline1992},
and wave equations for generalized Laplacians are the most 
general hyperbolic equations for which 
unique continuation is known to hold in the whole domain of influence,
see Theorem \ref{th_uniq_cont} below.
Such time sharp unique continuation, that goes back to the seminal paper \cite{Tataru1995},
is crucial to our proof.  

Let us also point out that  
if the symmetry assumptions in Theorem \ref{th_uniq_cont}
are weakened, then 
all the known uniqueness results in the scalar case 
require additional assumptions on the global geometry of $(M,g)$,
see \cite{Eskin2007,Kurylev2000a,Lassas2012}. 
We discuss 
the difficulties related to weaker symmetry assumptions
in more detail in Remark \ref{rem_nonsymm} below.

As a corollary of Theorem \ref{thm:maint}, let us consider the case when $(M,g)$ is known, 
$E$ is the trivial bundle $M\times\C^n$ with its usual Hermitian inner product and $V=0$.
Then $\nabla$ is of the form
\begin{align}
\label{connection_locally}
d_{A} = d+A,
\end{align}
where $A=A_{i}dx^{i}$
and each $A_{i}(x)$, $x \in M$, is a skew-Hermitian ($n\times n$)-matrix.
The Dirichlet-to-Neumann operator depends on $A$ and we write $\Lambda_{\p M}^{2T} = \Lambda_{\p M; A}^{2T}$. 

\begin{Corollary} 
Let $d_A$ and $d_B$ be two Hermitian connections on the trivial bundle $M \times \C^n$
over a fixed Riemannian manifold $(M,g)$,
and suppose that 
$\Lambda^{T}_{\p M; A}=\Lambda^{T}_{\p M; B}$ for
$T > \max_{x \in M} d_g(x, \p M)$.
Then, there exists a smooth $U:M\to U(n)$ such that $U|_{\partial M}=\Id$ and 
\begin{align}
\label{AB_gauge}
B=U^{-1}dU+U^{-1}AU.
\end{align}
\label{cor:test}
\end{Corollary}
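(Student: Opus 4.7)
The plan is to apply Theorem \ref{thm:maint} with $(M_{1},g_{1}) = (M_{2}, g_{2}) = (M,g)$, $E_{1} = E_{2} = M \times \C^n$, $V_{1} = V_{2} = 0$, $\S_{1} = \S_{2} = \p M$ and $\phi = \Id$. The hypothesis $\Lambda^{T}_{\p M; A} = \Lambda^{T}_{\p M; B}$ directly supplies the intertwining condition $\phi^{*}\Lambda^{2T'}_{\p M; B} = \Lambda^{2T'}_{\p M; A}\phi^{*}$ needed by the theorem (enlarging $T$ if necessary so that the time condition there is met, which is permitted since a larger $T$ only strengthens the assumption). The entire difficulty is absorbed by Theorem \ref{thm:maint}; what remains is to identify the bundle isomorphism it produces as a boundary-trivial gauge transformation.

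Theorem \ref{thm:maint} furnishes a Hermitian bundle isomorphism $\Phi : M \times \C^n \to M \times \C^n$ covering an isometry $\Psi : (M,g) \to (M,g)$, with $\Phi|_{\p M} = \Id$ and $\Phi^{*} d_{B} = d_{A}$. The boundary condition forces $\Psi|_{\p M} = \Id$. At each boundary point $x$, the differential $d\Psi_{x}$ is a linear isometry of $T_x M$ that equals the identity on $T_x \p M$; it must send the interior unit normal $\nu(x)$ to itself (rather than to $-\nu(x)$) because $\Psi$ is a self-map of $M$, so $d\Psi_x = \Id$ for every $x \in \p M$. Since an isometry of a connected Riemannian manifold is determined by its $1$-jet at a single point, $\Psi = \Id$ on all of $M$. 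This rigidity step is the only nontrivial thing beyond invoking the theorem.

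Thus $\Phi$ is a gauge transformation and may be written $\Phi(x,v) = (x, U(x)^{-1} v)$ for a smooth $U : M \to U(n)$ satisfying $U|_{\p M} = \Id$. Identifying sections of $M \times \C^n$ with $\C^n$-valued functions, a direct computation of the pullback connection gives
\begin{equation*}
(\Phi^{*} d_{B}) t = U \cdot d_{B}(U^{-1} t) = dt + \bigl( U B U^{-1} - (dU)\, U^{-1} \bigr) t.
\end{equation*}
Equating this with $d_{A} t = dt + A t$ yields $A = U B U^{-1} - (dU) U^{-1}$, which rearranges (using $d(U U^{-1}) = 0$) into the stated identity $B = U^{-1} dU + U^{-1} A U$, completing the argument.
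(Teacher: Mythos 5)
Your argument is essentially correct, but it is not the paper's route; the authors explicitly remark that ``the full power of Theorem \ref{thm:maint} is not needed'' for this corollary. You invoke the global Theorem \ref{thm:maint} --- and with it the entire gluing construction of Section 4 and the boundary-extension proposition --- and then eliminate the isometry $\Psi$ by a rigidity argument: $\Psi|_{\p M}=\Id$ forces $d\Psi_x$ to fix $T_x\p M$ and preserve the inward half-space, hence $d\Psi_x=\Id$, hence $\Psi=\Id$ on the connected $M$. That rigidity step is correct and is the genuinely new content of your proof; the concluding pullback computation is also right. The paper instead stays entirely inside the local reconstruction of Section 3: Theorem \ref{th_local_connection} with $\Gamma=\p M$ produces an orthonormal frame $\mathcal E$ of $E$ over $M_{\p M}=M\setminus N$, where the cut locus $N$ has measure zero; enforcing smoothness of the representations of $Wh$ (Lemma \ref{lem_smoothness_test}) extends $\mathcal E$ to a global smooth frame agreeing with the constant frame $\mathcal B$ on $\p M$, and the transition function $U$ between $\mathcal E$ and $\mathcal B$, together with the reconstructed representation $\widetilde A$ of $d_A$ in $\mathcal E$, yields the orbit $\mathcal O(A)$ directly. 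The paper's route buys an explicit, constructive $U$ and avoids Section 4 altogether; yours buys brevity at the cost of much heavier machinery. One caveat: your parenthetical ``enlarging $T$ if necessary \dots is permitted since a larger $T$ only strengthens the assumption'' is backwards --- agreement of the Dirichlet-to-Neumann maps on a longer time interval is a \emph{strictly stronger} hypothesis, which you cannot grant yourself. Theorem \ref{thm:maint} needs agreement on $(0,2T)$ with $T>\max_{x}d_g(x,\p M)$, and the paper's own derivation likewise uses $\Lambda^{2T}_{\p M;A}$ with that $T$; the corollary's ``$\Lambda^{T}$'' must be read as ``$\Lambda^{2T}$'' (a slip in the statement), and your proof, like the paper's, requires that reading rather than any genuine enlargement of the time interval.
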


Note that if $A$ and $B$ satisfy (\ref{AB_gauge}),
then $U^{-1}d_{A}U=d_{B}$ and hence
$P_{B}=U^{-1}P_{A}U$, where $P_i = d_i^* d_i$, $i=A,B$. Thus if $u$ solves the wave equation for $P_{B}$, then $Uu$ solves it
for $P_{A}$. Hence the above corollary can not be improved, that is, 
 if  $U:M\to U(n)$ satisfies $U|_{\partial M}=\Id$
and (\ref{AB_gauge}) holds, then $\Lambda^{T}_{\p M; A}=\Lambda^{T}_{\p M; B}$ for any
$T$. In the context of the gauges in Theorem \ref{thm:maint}, we have
that $\phi$ is the identity and $\Phi(x,s)=(x,U(x)s)$, where $(x,s)\in M\times\C^n$.

The situation of the corollary is the one that appears in the literature. For the abelian case $n=1$, the corollary in essentially proved in \cite{Kurylev1995} via the Boundary Control method.
The Boundary Control method
was pioneered for the isotropic wave equation on a domain in \cite{Belishev1987} and developed for manifolds in \cite{Belishev1992}.
Note, however, that in \cite{Kurylev1995} the boundary spectral data is used, and therefore the result 
does not give the sharp time $T$. 

In \cite{Finch2001}, the corollary is proved under the further assumptions that $M$ is 
a two dimensional domain,  $g$ is the Euclidean metric tensor and the connection is small in a suitable sense.
The proof uses geometric optics solutions and reduces the problem to an injectivity result about the non-abelian Radon transform, which is of independent interest; see \cite{Eskin2004} for the case of the Euclidean metric
and compactly supported connections.  More recently, the injectivity result for the non-abelian Radon transform was extended to any simply connected surface with strictly convex boundary and no conjugate points \cite{Paternain2012} and to higher dimensions and negative curvature \cite{Guillarmou2015}.

There is a result due to G. Eskin \cite{Eskin2005}  that implies Corollary \ref{cor:test} under the assumption that $M$ is a domain in Euclidean space with obstacles. %, but the metric is arbitrary. 
Our proof seems however simpler. Eskin also proves a related theorem for the case of time-dependent Yang-Mills potentials in \cite{Eskin2008a}. A survey on these results, including amended statements, is given in \cite{Eskin2006}. 

The proof of Corollary \ref{cor:test} follows directly from of our local reconstruction procedure and well-known properties of the cut locus, so the full power of Theorem \ref{thm:maint} is not needed. As far as we are aware, there are no previous results for this problem when the bundle is not trivial; perhaps the closest in spirit is the result in \cite{Kurylev2009a} for the hyperbolic Dirac equation. However in this reference it is assumed that the data is given on
the whole boundary for an infinite time interval, whereas our main result assumes only partial data and is sharp in terms of $T$. One of the main contributions of the present paper is to develop a  new method to glue local reconstructions. The method allows us to reconstruct an isomorphic copy of the structure $(g,E,\nabla,V)$ on the interior of $M$ given the data $\Lambda_\S^{2T}$ corresponding to a sharp time $T$.

Let us mention that there is a recent stability result for  Gel'fand's inverse interior spectral problem \cite{Bosi2017}.
There the problem is studied for compact Riemannian manifolds without boundary, and the result is closer to Corollary \ref{cor:test} than Theorem \ref{thm:maint} in the sense that the global geometry needs to be considered only along the cut locus of suitable semi-geodesic coordinates. 
As the proof in \cite{Bosi2017} uses also techniques from the Boundary Control method, we conjecture that if $d_A$ and $d_B$ are as in Corollary \ref{cor:test}, then 
$
d_1(\mathcal O(B), \mathcal O(A)) \le \omega(d_2(\Lambda^{T}_{\p M; A}, \Lambda^{T}_{\p M; B}))
$
where $\omega$ is a modulus continuity of the same double logarithmic type as in \cite{Bosi2017},
$d_1$ and $d_2$ are suitable distance functions,
and 
$$
\mathcal O(A) = \{U^{-1} A U + U^{-1} d U;\ U : M \to U(n),\ U|_{\p M} = Id \}
$$
is the orbit of $A$ under the gauge group.

As a final corollary,
let us consider an elliptic analogue of Theorem \ref{thm:maint}.
This application is very much in the spirit of \cite[Theorem 1.5]{Ferreira2013}
where an elliptic scalar valued equation was considered.

Let $(M_0,g_0)$ be a compact, connected  Riemannian manifold with boundary, and let $C=\mathbb{R}\times M_0$ be the infinite cylinder with the product metric $g=dt^2+g_0$.
Here $dt^2$ is the Euclidean metric on $\R$.
We consider a Hermitian vector bundle $E_0 \to M_0$ with a Hermitian connection $\nabla_{0}$, and
define the operator $P_{0}=\nabla^*_{0}\nabla_{0}$.
Moreover, we have an induced Hermitian bundle $E$ with connection $\nabla$ on $C$, that is,
 $E=\pi^*E_0$ and $\nabla=\pi^*\nabla_0$, where $\pi:C\to M_0$ is the canonical projection.

Let us denote by 
$\lambda_{1}\leq \lambda_{2}\leq \dots$ the Dirichlet eigenvalues of the operator $P_0$.
A point $\lambda\in \mathbb{C}\setminus[\lambda_{1}, \infty)$ is not in the continuous spectrum of the operator $\nabla^*\nabla=-\partial_{t}^{2}+P_0$ and, for any $f\in C_{0}^{\infty}(\partial C;E)$, the equation
\[(-\partial_{t}^{2}+P_0-\lambda)u=0\;\text{in}\;C,\;\;\;\;\;u|_{\partial C}=f,\]
has a unique bounded solution $u\in C^{\infty}(C;E)$.
We define the elliptic Dirichlet-to-Neumann map
\[
\Lambda(\lambda) f = \nabla_{\nu} u|_{\p C}, \quad
\Lambda(\lambda):C_{0}^{\infty}(\partial C;E)\to C^{\infty}(\partial C;E).\]

Our application is the following recovery result:

\begin{Corollary} 
The Hermitian vector bundle $E|_{\p C}$ and the elliptic Dirichlet-to-Neumann map
$\Lambda(\lambda)$ for a fixed $\lambda\in \mathbb{C}\setminus[\lambda_{1},\infty)$ determine the structure $(M_{0},g_{0},E_{0},\nabla_{0})$.
\label{thm:ell}
\end{Corollary}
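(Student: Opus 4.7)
The strategy is to reduce the elliptic problem on the cylinder to the hyperbolic problem on $M_0$ by partial Fourier transform in the axial variable $t$, and then invoke Theorem~\ref{thm:maint}.

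For $f\in C_0^\infty(\p C;E)$ write $\hat f(\tau,y)$ for its Fourier transform in $t$, a Schwartz family in $\tau\in\R$ of smooth sections of $E_0|_{\p M_0}$. Since $\lambda\in\C\setminus[\lambda_1,\infty)$ is fixed, the curve $\mu(\tau)=\lambda-\tau^2$ stays in the resolvent set of $P_0$ for every $\tau\in\R$, so the Dirichlet problem
\[ (P_0-(\lambda-\tau^2))v_\tau=0\text{ on }M_0,\qquad v_\tau|_{\p M_0}=\hat f(\tau,\cdot) \]
has a unique solution $v_\tau\in C^\infty(M_0;E_0)$ depending smoothly on $\tau$. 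Setting $u(t,x)=(2\pi)^{-1}\int_{\R} e^{it\tau}v_\tau(x)\,d\tau$ and using elliptic resolvent estimates to control the growth of $v_\tau$ in $\tau$, one verifies that $u$ is the unique bounded solution defining $\Lambda(\lambda)f$, and that the partial Fourier transform of $\Lambda(\lambda)f$ in $t$ equals $\Lambda_0(\lambda-\tau^2)\hat f(\tau,\cdot)$, where $\Lambda_0(\mu)$ denotes the elliptic Dirichlet-to-Neumann map of $P_0-\mu$ on $M_0$. Thus $\Lambda(\lambda)$ together with $E|_{\p C}$ determines $\Lambda_0(\mu)$ on the analytic arc $\{\lambda-\tau^2:\tau\in\R\}$ in the resolvent set.

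Elliptic Fredholm theory makes $\mu\mapsto\Lambda_0(\mu)$ a meromorphic operator-valued function on $\C$ with poles exactly at the Dirichlet eigenvalues $\lambda_j$ of $P_0$, so analytic continuation from the arc above determines $\Lambda_0(\mu)$ on the whole resolvent set. Inspecting the principal parts at the poles recovers the boundary spectral data of $P_0$: the eigenvalues $\lambda_j$ together with the Neumann traces $\nabla_\nu\phi_j|_{\p M_0}$ of an orthonormal basis of eigensections, up to the usual gauge of choosing an orthonormal basis in each eigenspace. Here $E_0|_{\p M_0}$ is recovered from $E|_{\p C}$ by restriction to any slice $\{t\}\times\p M_0$.

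The boundary spectral data of $P_0$ determines the hyperbolic Dirichlet-to-Neumann map of $(\p_t^2+P_0)w=0$ on $M_0$ for every $T>0$, through the classical spectral representation of the wave solution operator. Applying Theorem~\ref{thm:maint} with $V_i=0$, $\S_i=\p M_0$ and $T>\max_{x\in M_0}d_{g_0}(x,\p M_0)$ then recovers $(M_0,g_0,E_0,\nabla_0)$ up to a bundle isomorphism covering an isometry, as required. The most delicate step is the Fourier-transform identification: one must show that $\tau\mapsto v_\tau$ and $\tau\mapsto\Lambda_0(\lambda-\tau^2)\hat f(\tau,\cdot)$ are of at most polynomial growth in $\tau$, so that Fourier inversion is valid in the sense of tempered distributions and the reconstructed $u$ is indeed the unique bounded solution of the cylinder problem.
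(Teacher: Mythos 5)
Your proposal is correct and follows essentially the same route as the paper: separation of variables in the axial direction to extract $\Lambda_0(\lambda-k^2)$ for all $k\in\R$, meromorphic continuation in the spectral parameter, passage to the hyperbolic Dirichlet-to-Neumann map of $\partial_t^2+P_0$, and an application of Theorem \ref{thm:maint}. The only difference is technical rather than conceptual: the paper plugs the single mode $e^{ikt}h$ into a weighted-Sobolev extension of $\Lambda(\lambda)$ (Proposition \ref{prop:easy}), whereas you Fourier-transform compactly supported data and justify inversion via polynomial-in-$\tau$ resolvent bounds — two equivalent implementations of the same reduction, and your explicit detour through the boundary spectral data is exactly what the paper's citation to \cite{Katchalov2001} encapsulates.
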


Here, the structure is determined up to the natural gauge invariances as in Theorem \ref{thm:maint}.
It is possible to prove also a version of the corollary assuming that $\lambda$ is in the continuous spectrum of $-\partial_{t}^{2}+P_0$ as long as it avoids the eigenvalues $\lambda_i$. This extension can be carried out as
in \cite[Theorem 1.7]{Ferreira2013} but we do not include it here.

This paper is organized as follows. Section 1 is the introduction and states the main results. In Section 2 we include preliminaries, mostly having to do with the direct problem, finite speed of propagation, unique continuation and approximate controllability. The results here are standard, but some details are provided to ensure the usual techniques fit our setting.  Section 3 contains the local reconstruction procedure near the boundary.  We first reconstruct the metric $g$ and the core of the section is the reconstruction of the Hermitian bundle and the connection. The main local result is Theorem \ref{th_local_connection} and Corollary \ref{cor:test} is immediately derived from this theorem and well-known properties of the cut locus. Section 4 contains the global reconstruction procedure, expains in detail how to build up the structure from local data and finishes the proof of Theorem \ref{thm:maint}. In the final Section 5 we prove Corollary \ref{thm:ell}.

\section{Preliminaries}

\subsection{Local trivializations}

The connection $\nabla$ is of the form (\ref{connection_locally})
on a local trivialization of $E$.
Let us derive local expressions for $d_{A}^*$ and $P = d_{A}^*d_{A}$.
To this end, we consider a section $u: M \to E$ and
a $E$-valued 1-form $\beta=\beta_{i}dx^{i}$
supported on a local trivialization. As $A$ is skew-hermitian,
\begin{align*}
\langle Au,\beta\rangle_{L^{2}(M;E\otimes T^*M)}= \int_{M}g^{ij}\langle A_{i}u,\beta_{j}\rangle_{E}\,dx=-\int_{M}\langle u,g^{ij}A_{i}\beta_{j}\rangle_{E}\,dx.
%\label{eq:adA}
\end{align*}
We define
$(A,\beta) = g^{ij}A_{i}\beta_{j}$
and see that
$d_{A}^{*}=d^{*}-(A,\cdot)$.
Thus
\[Pu=d^{*}du+d^*(Au)-(A,du)-(A,Au).\]
We recall that for a 1-form $\alpha$ in local coordinates
$$
d^* \alpha = - |g|^{-1/2} \frac{\p}{\p x^i} \left( |g|^{1/2} g^{ij} \alpha_j \right),
$$
hence $d^*(Au)=(d^{*}A)u-(A,du)$, and
\begin{equation}
Pu=d^*du-2(A,du)+(d^*A)u-(A,Au).
\label{eq:P}
\end{equation}
This exposes the nature of $P$: the principal part is the usual Laplacian and the first order term given by $-2(A,du)$. %and a zeroth order term $(d^*A)u-(A,Au)$.

%It is instructive to write $P$ when $n=1$. In this case $A=i\theta$, where $\theta$ is a real-valued 1-form and hence:
%\[Pu=\Delta u-2i(\theta,du)+(id^*\theta+|\theta|^{2})u,\]
%where $(\theta,du)=g^{ij}\theta_{i} \partial_{x^{j}} u$.

When working near the boundary $\p M$, it is convenient to use 
boundary normal coordinates, that is, 
semigeodesic coordinates adapted to the boundary.
Let $\Gamma \subset \p M$ be open. Then 
the semigeodesic coordinates adapted to $\Gamma$ are 
given by the map 
\begin{align}
\label{semigeodesic_coord}
(s,y) \mapsto \gamma(s; y, \nu), \quad y \in \Gamma,\ s \in [0, \sigma_\Gamma(y)),
\end{align}
where the cut distance $\sigma_\Gamma : \Gamma \to (0,\infty)$ is defined by 
\begin{align} 
\label{def_cut_distance}
\sigma_\Gamma(y) &= \max \{ s \in (0, \tau_M(y)];\ d_g(\gamma(s; y, \nu), \Gamma) = s\},
\\\notag
\tau_M(y) &= \sup \{ s \in (0, \infty);\ \gamma(s; y, \nu) \in M^\inter \}.
\end{align}
Here %$d_g(\cdot, \cdot)$ is the Riemannian distance and 
$\gamma(\cdot; x, \xi)$ is the geodesic with the initial data $(x,\xi) \in TM$.
We recall that $\nu$ is the interior unit normal on $\p M$,
and define 
\begin{align}
\label{semigeod_strip}
M_\Gamma = \{\gamma(s;y,  \nu);\ y \in \Gamma,\ s \in [0, \sigma_\Gamma(y))\}. 
\end{align}
Then a point $x \in M_\Gamma$ is represented in the coordinates (\ref{semigeodesic_coord}) by $(s, y)$, 
where $s$ is the distance $d_g(x,\Gamma)$ and $y$ is the unique closest point to $x$ in $\Gamma$.
Moreover, $g$ has the form $ds^2 + h_{jk}(s,y) dy^j dy^k$ and the principal part of $P$ is 
\begin{align}
\label{eq_P_in_boundary_normal}
-\p_s^2 - h^{jk}(s,y) \p_{y^j} \p_{y^k}.
\end{align}

\subsection{The direct problem}
\label{sec_direct}

Let us consider the initial-boundary value problem
\begin{align}
\label{eq_wave_initialbd}
&(\partial_{t}^{2}+P + V)u(t,x)=F, \quad (0,T)\times M,\\%\label{eq:wave}\\
&u|_{(0,T)\times\partial M}=f, \quad (0,T)\times\partial M,\notag\\
&u|_{t=0}=\psi,\ \partial_{t}u|_{t=0}=\phi, \quad \text{in}\;M,\notag
\end{align}
where $T > 0$.
When $f = 0$ we have the energy estimate
\begin{align}
\label{energy_estimate}
&\norm{u(t)}_{H_0^1(M;E)} + \norm{\p_t u(t)}_{L^2(M;E)}
\\\notag&\qquad\le C (\norm{\psi}_{H_0^1(M;E)} + \norm{\phi}_{L^2(M;E)} + \norm{F}_{L^2((0,t) \times M;E)}),
\end{align}
for all $t \in (0,T)$.
For a proof in the scalar valued case, we refer to \cite[Section 7.2]{Evans1998}. The proof is analogous in the vector valued case and we omit it. 
%This implies existence and uniqueness of a solution to (\ref{eq_wave_initialbd}) when $f = 0$, $F \in L^2((0,T) \times M; E)$, $\psi \in H_0^1(M;E)$ and $\phi \in L^2(M;E)$.
We have also higher regularity results under suitable compatibility conditions. In what follows, we need only the following estimate 
\begin{align}
\label{energy_estimate_higher}
&\norm{u}_{H^m((0,T) \times M;E)} \le C (\norm{\phi}_{H^{m-1}(M;E)} + \norm{F}_{H^{m-1}((0,T) \times M;E)}),
\end{align}
where $m \ge 1$, $f$ and $\psi$ vanish, $F$ is compactly supported in the time interval $(0,T)$ (but not necessarily in space), and $\phi$ 
is compactly supported in $M^\inter$, see e.g. \cite{Evans1998}. 
We can extend $f \in C_0^\infty((0,\infty) \times \p M; E)$
as a smooth function on the whole domain $(0,\infty) \times M$ and substract it from $u$. By using 
(\ref{energy_estimate_higher}) we see that the solution of (\ref{eq:wave}) is smooth for such sources $f$.

We need a sharp regularity result for the Neumann trace.
The result is due to Lasiecka, Lions and Triggiani
in the scalar valued case \cite{Lasiecka1986}.
The proof in the present setting is analogous but we give it for the convenience of the reader. 
We will use the following identity
\begin{align}
\label{intpart_nabla}
&\langle \nabla^*u,v\rangle_{L^{2}(M;E)} - \langle u, \nabla v\rangle_{L^{2}(M;E\otimes T^*M)} 
\\\notag&\qquad= 
\langle d^*u,v\rangle_{L^{2}(M;E)} - \langle u,dv\rangle_{L^{2}(M;E\otimes T^*M)} 
= \int_{\partial M}\langle i_{\nu}u,v\rangle_{E}\,dS,
\end{align}
where $u \in C^\infty(M;E\otimes T^*M)$, $v \in C^\infty(M;E)$,
and $dS$ is the Riemannian volume of $(\p M, g)$.
This follows from \cite[Prop. 2.9.1]{Taylor1996} since the principal symbol of $\nabla$ coincides with the principal symbol of $d$.

\begin{Theorem}
\label{th_regularity_neumann}
Suppose that $F$, $f$ and $\psi$ vanish and let $\phi \in L^2(M;E)$.
Then the solution $u$ of (\ref{eq_wave_initialbd}) satisfies 
$\nabla_\nu u \in L^2((0,T)\times\partial M; E)$.
\end{Theorem}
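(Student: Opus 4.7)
The statement is a geometric version of the hidden regularity theorem of Lasiecka--Lions--Triggiani, and the route I would take is their multiplier method adapted to sections of $E$. First, by (\ref{energy_estimate_higher}) applied with $m=2$, for $\phi_k \in C_0^\infty(M^\inter;E)$ approximating $\phi\in L^2(M;E)$ the corresponding solutions $u_k$ are smooth up to $\p M$ and $\nabla_\nu u_k$ is a classical function. It therefore suffices, by linearity and density, to prove the a priori bound
$$
\norm{\nabla_\nu u_k}_{L^2((0,T)\times\p M;E)}\le C\,\norm{\phi_k}_{L^2(M;E)};
$$
the $L^2$ limit of $\nabla_\nu u_k$ is then identified with the distributional Neumann trace of $u$ in the usual way.

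To prove this estimate, I would pick a smooth vector field $h$ on $M$ with $h|_{\p M}=\nu$ (available via a tubular neighborhood) and form the multiplier identity
$$
0 = 2\,\mathrm{Re}\!\int_0^T\!\!\int_M \langle (\p_t^2+P+V)u,\,\nabla_h u\rangle_E\,dx\,dt.
$$
Integrating the $\p_t^2 u$ contribution by parts in $t$ and using the compatibility identity $2\,\mathrm{Re}\langle \nabla_h v,v\rangle_E = h(|v|_E^2)$ with $v=\p_t u$, a further integration by parts in $x$ converts the bulk term into $\int_0^T\!\int_M(\mathrm{div}\,h)|\p_t u|_E^2\,dx\,dt$ plus energy contributions at $t=0,T$; no spatial boundary term survives, since $\p_t u|_{\p M}=0$. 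For the $Pu$ contribution, the identity (\ref{intpart_nabla}) combined with $P=\nabla^*\nabla$ gives
$$
\langle Pu,\nabla_h u\rangle_{L^2(M;E)} = \langle \nabla u,\nabla\nabla_h u\rangle_{L^2(M;E\otimes T^*M)} + \int_{\p M}\langle \nabla_\nu u,\nabla_h u\rangle_E\,dS.
$$
Because $u|_{\p M}=0$ forces all tangential covariant derivatives of $u$ to vanish on $\p M$, one has $\nabla_h u|_{\p M}=\nabla_\nu u$, so the boundary integral equals $\int_{\p M}|\nabla_\nu u|_E^2\,dS$, the quantity I want to control. The interior piece $2\,\mathrm{Re}\langle\nabla u,\nabla\nabla_h u\rangle_E$ I would expand in local coordinates: its principal part is $h(|\nabla u|_E^2)$, which upon another integration by parts in $x$ yields a further boundary term $\int_{\p M}|\nabla_\nu u|_E^2\,dS$, an interior $\mathrm{div}\,h$ piece, and --- the only genuinely new feature compared to the scalar case --- commutator contributions $\langle \nabla_{\p_j}u,F_{ki}u\rangle_E$ from $[\nabla_{\p_k},\nabla_{\p_i}]u=F_{ki}u$, together with corrections involving derivatives of $h^i$, all pointwise bounded by $|\nabla u|_E^2+|u|_E^2$. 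The $Vu$ contribution is clearly dominated by $C\norm{u}_{L^2}\norm{\nabla u}_{L^2}$.

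Collecting everything, a positive multiple of $\int_0^T\!\int_{\p M}|\nabla_\nu u|_E^2\,dS\,dt$ will be bounded by
$$
C\sup_{t\in[0,T]}\!\big(\norm{u(t)}_{H^1_0(M;E)}^{2}+\norm{\p_t u(t)}_{L^2(M;E)}^{2}\big) + C\!\int_0^T\!\big(\norm{u(t)}_{H^1_0(M;E)}^{2}+\norm{\p_t u(t)}_{L^2(M;E)}^{2}\big)\,dt,
$$
which by the energy estimate (\ref{energy_estimate}) is at most $C\norm{\phi}_{L^2(M;E)}^{2}$, finishing the proof. The hard part will be the bookkeeping in the previous paragraph: one has to verify that each spatial boundary contribution either vanishes thanks to the Dirichlet data or combines, with the correct sign, into a positive multiple of the desired $\int_{\p M}|\nabla_\nu u|_E^2\,dS$, and that the curvature commutators and Christoffel-type corrections produced by the covariant expansion are genuinely of order $\le 1$ in $u$. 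None of these corrections is analytically serious, but the identity has to be written out with enough care that the sign of the net boundary contribution is transparent.
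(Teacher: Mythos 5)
Your proposal is correct and follows essentially the same route as the paper: the Lasiecka--Lions--Triggiani multiplier argument with multiplier $\nabla_h u$ for an extension $h$ of $\nu$, the identity (\ref{intpart_nabla}) to produce the boundary term, comparison of principal parts to absorb the covariant/curvature corrections into an $H^1$-bounded remainder, the energy estimate (\ref{energy_estimate}), and a density argument to pass from $C_0^\infty$ data to $\phi\in L^2(M;E)$. The only difference is presentational: you name the curvature commutators explicitly, whereas the paper bundles them into a single remainder $R$ with $|R|\le C\norm{u}_{H^1((0,T)\times M;E)}^2$.
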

\begin{proof}
We will first suppose that $\phi \in C_0^\infty(M;E)$.
Then $u$ is smooth by (\ref{energy_estimate_higher}).
We extend $\nu$ as a smooth vector field on the whole domain $M$,
and denote this extension still by $\nu$.
We have 
\begin{align*}
&\pair{Pu, \nabla_\nu u}_{L^2((0,T) \times M;E)}
\\&\quad= \pair{\nabla u, \nabla \nabla_\nu u}_{L^2((0,T) \times M; E\otimes T^*M)}
+ \int_0^T \int_{\partial M}|\nabla_\nu u|_E^2 \,dS.
\end{align*}
Here $|u|_E^2 = \pair{u,u}_E$.
In local coordinates, the principal part of both 
$$
\pair{\nabla_j u, \nabla_k \nabla_\nu u}_E g^{jk} \quad \text{and} \quad 
\frac 1 2 \nu (\pair{\nabla_j u, \nabla_k u}_E g^{jk})
$$ 
is $\pair{\p_j u, \nu^p \p_p \p_k u}_E g^{jk}$. Thus 
\begin{align*}
\pair{\nabla u, \nabla \nabla_\nu u}_{L^2((0,T) \times M; E\otimes T^*M)}
= \frac 1 2 \int_0^T \int_M \nu (\pair{\nabla_j u, \nabla_k u}_E g^{jk}) dx + R,
\end{align*}
where the remainder term $R$ satisfies $|R| \le C \norm{u}_{H^1((0,T) \times M;E)}^2$.
Moreover, 
\begin{align*}
\int_0^T \int_{M} \nu (\pair{\nabla_j u, \nabla_k u}_E g^{jk}) dx 
&= - \int_0^T \int_{M} (\mathop{\rm div} \nu) \pair{\nabla_j u, \nabla_k u}_E g^{jk} dx 
\\&\qquad- \int_0^T \int_{\p M} \pair{\nabla_j u, \nabla_k u}_E g^{jk} dS.
\end{align*}

As $u$ vanishes on the boundary, we have in the boundary normal coordinates $(s, y) \in [0, \epsilon) \times \p M$ that
$$
\pair{\nabla_j u, \nabla_k u}_E g^{jk} = |\p_s u|_E^2 = |\nabla_\nu u|_E^2.
$$
Hence
\begin{align}
\label{eq_LLT_Pu_term}
\pair{Pu, \nabla_\nu u}_{L^2((0,T) \times M; E)}
= \frac 1 2 \norm{\nabla_\nu u}_{L^2((0,T) \times \p M;E)}^2 + R,	
\end{align}
where the remainder term $R$ satisfies $|R| \le C \norm{u}_{H^1((0,T) \times M;E)}^2$.

Analogously 
\begin{align*}
&\pair{\p_t^2 u, \nabla_\nu u}_{L^2((0,T) \times M; E)}
\\&\qquad= -\frac 1 2 \int_0^T \int_M \nu \pair{\p_t u, \p_t u}_E dx
+ \left[ \int_{M} \pair{\p_t u, \nabla_\nu u}_E dx \right]_{t=0}^{t=T},
\end{align*}
and 
\begin{align*}
\int_0^T \int_M \nu \pair{\p_t u, \p_t u}_E dx
&= - \int_0^T \int_M (\mathop{\rm div} \nu) \pair{\p_t u, \p_t u}_E dx
\\&\qquad- \int_0^T \int_{\p M} \pair{\p_t u, \p_t u}_E dx,
\end{align*}
where the second term on the right-hand side is zero since $u=0$ on $\p M$.
Hence
\begin{align}
\label{eq_LLT_ptu_term}
&|\pair{\p_t^2 u, \nabla_\nu u}_{L^2((0,T) \times M;E)}|
\le C \norm{u}_{H^1((0,T) \times M;E))}^2 
\\\nonumber&\qquad\qquad\qquad
+ C \max_{t=0,T}(\norm{u(t)}_{H_0^1(M;E)}^2 + \norm{\p_t u(t)}_{L^2(M;E)}^2).
\end{align}
Clearly
\begin{align}
\label{eq_LLT_Vu_term}
|\pair{V u, \nabla_\nu u}_{L^2((0,T) \times M;E)}|
\le C \norm{u}_{H^1((0,T) \times M;E)}^2.
\end{align}
Combining (\ref{eq_LLT_Pu_term})-(\ref{eq_LLT_Vu_term}) with the energy estimate (\ref{energy_estimate}),
we get
$$
\norm{\nabla_\nu u}_{L^2((0,T) \times \p M;E)}^2 \le C \norm{\phi}_{L^2(M;E)}^2.
$$
The claim follows since $C_0^\infty(M;E)$ is dense in $L^2(M;E)$.
\end{proof}

We will next discuss how (\ref{eq_wave_initialbd}) 
can be solved for non-smooth $\phi$ that are supported in the interior of $M$.
Let $K \subset M^\inter$ be compact and choose $\chi \in C_0^\infty(M)$ such that $\chi = 1$ near $K$.
Define first the map
$$
\mathcal W : H_0^{m-1}((0,T) \times M;E) \to 
H_0^{m}(M;E), 
\quad \mathcal WF = \chi u(T),
$$
where $u$ solves (\ref{eq_wave_initialbd}) 
with $f = 0$, $\psi = 0$ and $\phi = 0$.
For $\phi \in C_0^\infty(M; E)$ 
satisfying $\supp(\phi) \subset K$
it holds that 
$$
\pair{\mathcal W F, \phi}_{L^2(M; E)} = 
\pair{F, v}_{L^2((0,T) \times M; E)} 
$$
where $v$ is the solution of 
\begin{align}
\label{adjoint_of_W}
&(\partial_{t}^{2}+P+V)v(t,x)=0, \quad (0,T)\times M,\\%\label{eq:wave}\\
&v|_{(0,T)\times\partial M}=0, \quad (0,T)\times\partial M,\notag\\
&v|_{t=T}=0,\ \partial_{t}v|_{t=T}=-\phi, \quad \text{in}\;M.\notag
\end{align}
Then the adjoint of $\mathcal W$,
restricted on the subspace 
$$\dot H^{-m}(K;E) = \{\phi \in H^{-m}(M;E);\ \supp(\phi) \subset K\},$$
is the unique continuous extension 
$$
\dot H^{-m}(K;E) \to H^{-m+1}((0,T) \times M;E)
$$
of the map
solving (\ref{adjoint_of_W}) for smooth $\phi$.
We may reverse time to get the solution $u$ of 
(\ref{eq_wave_initialbd}) with $\phi \in \dot H^{-m}(K;E)$
and $F$, $f$ and $\psi$ vanishing.

Let us now consider the traces of such a solution $u$.
As the principal part of $P+V$ is of the form (\ref{eq_P_in_boundary_normal})
in the boundary normal coordinates $(s,y) \in [0, \epsilon) \times \p M$,
we may repeat the proof of \cite[Th. B.2.9]{Hormander1985} without any changes in the present, vector valued setting. 
This implies that
$u$ is in $\overline H_{(\mu,\sigma)}^{loc}((0,T) \times (0, \epsilon) \times \Gamma; E)$ where $\mu + \sigma \le -m + 1$ and $\Gamma \subset \p M$ is a coordinate neighbourhood.
Taking now large $\mu \in \R$ and small $\sigma \in \R$,
we may apply \cite[Th. B.2.7]{Hormander1985}
to see that there is $m' \in \R$ such that 
that the maps $s \mapsto u(\cdot, s, \cdot)$ and $s \mapsto \nabla_\nu u(\cdot, s, \cdot)$
are continuous with values in $H^{-m'}((0,T) \times \Gamma;E)$.
In particular, the traces $u|_{(0,T) \times \p M}$ and $\nabla_\nu u|_{(0,T) \times \p M}$ are well-defined for 
the solution $u$ of 
the wave equation (\ref{eq_wave_initialbd}) with $\phi \in \dot H^{-m}(K;E)$
and $F$, $f$ and $\psi$ vanishing.

\subsection{Finite speed of propagation, unique continuation and approximate controllability}

The equation (\ref{eq:wave}) has the following
finite speed of propagation property:

\begin{Theorem}
\label{th_finite_speed}
Let $T > 0$, $U \subset M$ be open and define the cone
$$
\mathcal C = \{(t,x) \in (0,T) \times M;\ d_g(x, U) < T- t \}.
$$
Suppose that $f \in C_0^\infty((0,T) \times \p M;E)$
vanishes in the intersection 
$$
\mathcal C \cap ((0,T) \times \p M).
$$
%and suppose that $\phi = 0$, $\psi = 0$ and $F = 0$.
Then the solution $u$ of (\ref{eq:wave}) vanishes in $\mathcal C$.
In particular, if $\Gamma \subset \p M$ is open, $r \in (0,T)$,
and $\supp(f) \subset (T-r,T) \times \Gamma$, then 
$
\supp(u(T))$ is contained in the domain of influence 
$$
M(\Gamma, r) = \{x \in M;\ d_g(x,\Gamma) \le r\}.
$$
\end{Theorem}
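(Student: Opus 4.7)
My plan is to prove both parts by the classical Gronwall-type energy estimate on the backward cone $\mathcal{C}$, adapted to the vector bundle setting. Since the principal symbol of $P + V$ equals $g^{ij}\xi_i\xi_j$ and the identity (\ref{intpart_nabla}) furnishes the Green's formula for $\nabla$, the argument for the scalar wave equation goes through with only cosmetic changes. Throughout I will use that the solution $u$ is smooth under the given hypotheses, so all manipulations below are justified.

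For $t \in [0, T]$ I would set $\Omega(t) = \{x \in M : d_g(x, U) < T - t\}$ and consider the local energy
\[
E(t) = \int_{\Omega(t)} \left(|\p_t u|_E^2 + |\nabla u|_E^2 + |u|_E^2\right) dx,
\]
where $|v|_E^2 = \pair{v,v}_E$. The initial conditions in (\ref{eq:wave}) give $E(0) = 0$. Differentiating in $t$, substituting $\p_t^2 u = -\nabla^*\nabla u - Vu$, and applying (\ref{intpart_nabla}) on $\Omega(t)$ to integrate by parts in the $\nabla u$-term, the two bulk expressions $\pm 2\operatorname{Re}\pair{\nabla^*\nabla u, \p_t u}_{L^2(\Omega(t);E)}$ cancel. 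What survives is (a) lower-order bulk contributions bounded by $CE(t)$, arising from $V$ and from differentiating $\int|u|_E^2$; (b) a boundary integral
\[
B(t) = -2\operatorname{Re}\int_{\p\Omega(t)} \pair{\nabla_\nu u, \p_t u}_E\, dS
\]
coming from (\ref{intpart_nabla}) (with $\nu$ the interior normal to $\Omega(t)$); and (c) a negative flux contribution
\[
F(t) = -\int_{\p\Omega(t) \cap M^\inter} \left(|\p_t u|_E^2 + |\nabla u|_E^2 + |u|_E^2\right) dS
\]
reflecting the unit Lipschitz constant of $d_g(\cdot,U)$, i.e. that $\Omega(t)$ shrinks at speed~$1$.

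Next I would dispose of the boundary terms. On $\p\Omega(t) \cap \p M \subset \mathcal{C} \cap ((0,T) \times \p M)$ the hypothesis forces $f = 0$, hence $u$ and $\p_t u$ vanish, killing the contribution to $B(t)$ there. On $\p\Omega(t) \cap M^\inter$, Cauchy--Schwarz gives $|2\operatorname{Re}\pair{\nabla_\nu u, \p_t u}_E| \le |\nabla u|_E^2 + |\p_t u|_E^2$, and combining with $F(t)$ yields $B(t) + F(t) \le -\int_{\p\Omega(t) \cap M^\inter} |u|_E^2\, dS \le 0$. Hence $\tfrac{d}{dt} E(t) \le C E(t)$, and Gronwall combined with $E(0) = 0$ gives $u \equiv 0$ in $\mathcal{C}$.

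For the second statement I would fix $x_0 \in M \setminus M(\Gamma, r)$, set $\delta = d_g(x_0, \Gamma) - r > 0$, and apply the first part with $U = B_g(x_0, \delta/2)$. For $x \in \Gamma$ the triangle inequality gives $d_g(x, U) \ge r + \delta/2 > r$, so $\mathcal{C} \cap ((0,T) \times \Gamma) \subset (0, T - r) \times \Gamma$, where $f$ vanishes by the support assumption; outside $\Gamma$, $f$ is zero on $\p M$ by definition. The first part then forces $u \equiv 0$ in $\mathcal{C}$, and since $x_0 \in U$ gives $(t, x_0) \in \mathcal{C}$ for all $t < T$, continuity up to $t = T$ yields $u(T, x_0) = 0$; varying $x_0$ finishes the proof. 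The main obstacle in this plan is the sign accounting in $B(t) + F(t)$: one must verify that the $1$-Lipschitz constant of $d_g(\cdot,U)$ matches the characteristic speed of $P+V$ exactly, so that the Cauchy--Schwarz estimate of the indefinitely-signed $B(t)$ is absorbed precisely by the magnitude of the negative flux $F(t)$.
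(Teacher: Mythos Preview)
Your argument is the standard energy method on shrinking cone slices, and it is essentially correct. The paper itself does not supply a proof of this theorem: it simply refers to \cite[Lemma 4.1]{Isozaki2014} for the scalar case and states that the vector-valued case is analogous, which is exactly what your write-up fills in. So there is nothing to compare on the level of ideas; what you wrote is the expected content of the omitted proof.

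One technical point you should be aware of (though it does not undermine your plan): the level sets $\partial\Omega(t)\cap M^{\inter}$ of $d_g(\cdot,U)$ are in general only Lipschitz, not smooth, so the ``Reynolds transport'' step producing $F(t)$ and the integration by parts producing $B(t)$ are not literally available for every $t$. The usual fix is either to replace $U$ by a slightly larger open set and the distance function by a smooth approximation from above with gradient norm at most $1$, or to run the argument for almost every $t$ via the coarea formula and conclude by integrating the differential inequality. This is routine and does not affect the sign balance $B(t)+F(t)\le 0$ that you correctly identify as the crux.
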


We refer to \cite[Lemma 4.1]{Isozaki2014}
for a proof in the scalar valued case. 
The proof in the present setting is analogous and we omit it.

The operator $P+V$ is of principally scalar form, and the local unique continuation result \cite{Eller2002} can be applied. 
The local result implies the following result due to Eller and Toundykov \cite{Eller2012} that is analogous to the semi-global Holmgren theorem.

\begin{Theorem}
\label{th_uniq_cont}
Let $T>0$ and let $\Gamma \subset \p M$ be open.
Let $s \in \R$, and suppose that $u \in H^{s}((0,2T) \times M;E)$
satisfies $(\p_t^2 + P+V) u = 0$ and
$$
u|_{(0,2T) \times \Gamma} = 0, \quad \nabla_\nu u|_{(0,2T) \times \Gamma} =0.
$$
Then $u(T,x) = 0$ whenever $x \in M(\Gamma,T)^\inter$.
\end{Theorem}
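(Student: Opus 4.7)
The plan is to reduce the vector-valued problem to the sharp scalar unique continuation theorem of Tataru, as implemented by Eller \cite{Eller2002} and Eller--Toundykov \cite{Eller2012} for principally scalar systems. Since $P+V$ has principal symbol $|\xi|_g^2\,\Id_E$, the principal part of $\p_t^2 + P + V$ is the scalar wave operator tensored with the identity on the fiber; all matrix-valued coupling between fiber components enters only through the first-order term $-2(A, d\cdot)$ in \eqref{eq:P} and the zeroth-order terms $(d^*A) - (A,A\cdot) + V$. This is exactly the ``principally scalar'' setting in which Tataru's argument carries through without change.

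Locally, I would proceed as follows. Fix $x_0 \in M(\Gamma,T)^\inter$ with $r := d_g(x_0,\Gamma) < T$. Following Tataru (and H\"ormander), I would construct around each point of the relevant spacetime region a family of pseudo-convex hypersurfaces $\{\varphi = c\}$ with $\varphi$ strictly spacelike, i.e.\ $|\varphi'_x|_g^2 > |\varphi'_t|^2$. For a scalar-principal wave operator, strict spacelikeness implies pseudo-convexity in the sense of H\"ormander, which is the crucial geometric input yielding the sharp time. A matrix Carleman estimate of the schematic form
\[
\tau^3 \norm{e^{\tau \varphi} v}_{L^2}^2 + \tau \norm{e^{\tau \varphi} \nabla v}_{L^2}^2 \le C \norm{e^{\tau \varphi} (\p_t^2 + P + V) v}_{L^2}^2
\]
for compactly supported test sections $v$ and large $\tau$ then follows: the factors $\tau^3$ and $\tau$ on the left-hand side dominate the matrix-valued off-diagonal terms $\norm{e^{\tau\varphi}Vv}$ and $\norm{e^{\tau\varphi}A\cdot\nabla v}$ produced by the connection and the potential, so the estimate is insensitive to these couplings. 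A standard cutoff/bootstrap argument converts this into local unique continuation across any strictly spacelike hypersurface.

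With local unique continuation in hand, the global conclusion follows by a standard foliation argument: starting from a neighbourhood of $(0,2T) \times \Gamma$, where $u$ vanishes by the Cauchy data hypothesis, one iterates a family of strictly spacelike surfaces covering the double cone $\{(t,x) : d_g(x,\Gamma) + |t - T| < T\}$, each step extending the set where $u$ vanishes, to conclude $u(T,x_0) = 0$ for every $x_0 \in M(\Gamma,T)^\inter$. For the low-regularity hypothesis $u \in H^s$ with arbitrary $s \in \R$, I would first mollify in time by $\rho_\e(t)$; since the coefficients of $\p_t^2 + P + V$ are time-independent, $\rho_\e *_t u$ solves the same equation with vanishing Cauchy data on $(\e, 2T-\e) \times \Gamma$ and is smooth in $t$. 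Applying the higher-regularity result on slightly shrunken time intervals and letting $\e \to 0$ recovers the claim for $u$ itself.

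The main obstacle I anticipate is the careful handling of the matrix-valued first-order coupling $A$ in the Carleman estimate: one must verify that the single power of $\tau$ gained on $\norm{e^{\tau\varphi}\nabla v}$ actually dominates $\norm{e^{\tau\varphi} A \cdot \nabla v}$ uniformly across fiber components, which is precisely where the principally scalar hypothesis is essential. For operators with genuinely matrix-valued principal symbol (e.g.\ Dirac-type systems), pseudo-convexity would have to be redefined and the sharp time is not known to hold. Modulo this point, the upgrade from the scalar wave equation to a principally scalar system is a direct adaptation of Tataru's argument, and the low-regularity passage is a standard mollification.
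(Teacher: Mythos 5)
First, note that the paper does not actually prove this theorem: it observes that $P+V$ is principally scalar and cites the local unique continuation result of \cite{Eller2002} together with the semi-global version of Eller--Toundykov \cite{Eller2012}. Your overall architecture --- reduce to the principally scalar case, prove a local Carleman-based result, iterate along a foliation, mollify in time for low regularity --- matches those references. However, your local step contains a genuine error. The claim that strict spacelikeness of $d\varphi$, i.e.\ $|\varphi'_x|_g^2 > |\varphi'_t|^2$, implies pseudo-convexity in H\"ormander's sense is false: for the flat wave operator the level sets of $\varphi = x^1$ satisfy your condition but are not pseudo-convex (the Hessian condition fails along the null bicharacteristics tangent to the surface), and more generally the timelike hypersurfaces one must cross to propagate vanishing from the lateral set $(0,2T)\times\Gamma$ into the sharp domain of influence are not pseudo-convex in the parameter range needed to reach $T = \max_{x} d_g(x,\Gamma)$. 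Consequently the power-weight Carleman estimate you write down does not hold for these weights. This is not a technicality: the Alinhac--Baouendi counterexamples show that unique continuation across timelike surfaces genuinely fails for wave operators with time-dependent zeroth-order terms, so an estimate of your schematic form --- whose constants are insensitive to the time behaviour of $A$ and $V$ --- cannot be true in general.

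The missing idea is Tataru's: because the coefficients of $\p_t^2+P+V$ are independent of $t$, one conjugates not only by $e^{\tau\varphi}$ but also by the pseudodifferential Gaussian regularization $e^{-\epsilon|D_t|^2/(2\tau)}$ in the time variable, and proves the Carleman estimate for this doubly conjugated operator; the partial analyticity (here, $t$-independence) of the coefficients is what permits crossing non-characteristic but non-pseudo-convex surfaces and hence yields the sharp time. The adaptation to principally scalar systems --- which is precisely the content of \cite{Eller2002} --- works because the conjugations leave the scalar principal part untouched and the matrix-valued terms enter only at lower order, exactly as you say; but that absorption takes place inside Tataru's estimate, not inside the classical H\"ormander one. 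Your time-mollification step for $u\in H^s$ is sound since the coefficients are $t$-independent, modulo the separate point that the traces on $(0,2T)\times\Gamma$ must be shown to be well defined, which the paper arranges in Section 2.2.
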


Let us denote $Wf = u(T)$, where $u$ is the solution of (\ref{eq:wave}).
The formal adjoint of $W$ is $W^* \phi = \nabla_\nu v|_{(0,T) \times \p M}$,
where $v$ is the solution of (\ref{adjoint_of_W}). 
Indeed,
\begin{align*}
%\label{31.2}
0 &= \pair{(\partial_{t}^{2}+P+V)u, v}_{L^2((0,T) \times M;E)} -
\pair{u, (\partial_{t}^{2}+P+V)v}_{L^2((0,T) \times M;E)}
\\ \nonumber
&= 
\left[ \pair{\p_t u, v}_{L^2(M;E)} - \pair{u, \p_t v}_{L^2(M;E)}\right]_{t=0}^{t=T}
\\ \nonumber
&\qquad+
\pair{\nabla_\nu u, v}_{L^2((0,T) \times \p M;E)} - \pair{u, \nabla_\nu v}_{L^2((0,T) \times \p M;E)}
\\ \nonumber
&= 
\pair{u(T), \phi}_{L^2(M;E)}
- \pair{f, \nabla_\nu v}_{L^2((0,T) \times \p M;E)}.
\end{align*}
As discussed in the end of the previous section,
for any $m \in \R$ and compact $K \subset M^\inter$
there is $m' \in \R$ such that 
\begin{align}
\label{Wstar_continuity}
W^* : \dot H^{-m}(K;E) \to H^{-m'}((0,T) \times \p M;E).
\end{align}
For the purposes of the present paper,
apart from the case $m=0$ described in Theorem \ref{th_regularity_neumann},
the optimal value of $m'$ is irrelevant.
We may consider $L^2(K; E)$ 
as the subspace of $L^2(M; E)$ consisting of functions supported in $K$ (i.e. $\dot H^0(K;E)$).

If $\Gamma \subset \p M$ is open and nonempty and $r > 0$, then
the map 
$$
\phi \mapsto \nabla_\nu v|_{(0,r) \times \Gamma} : 
L^2(M(\Gamma, r);E) \to L^2((0,r) \times \Gamma; E)
$$ 
is injective by Theorem \ref{th_uniq_cont}. A duality argument implies that the wave equation (\ref{eq:wave})
is approximately controllable in the sense of the lemma below. 
This is well-known in the scalar valued case, see e.g. \cite{Katchalov2001}.
The proof in the present setting is analogous, however, we give it for the convenience of the reader. 

\begin{Lemma}
\label{lem_density}
Let $\Gamma \subset \p M$ be open and $r > 0$. Then
\begin{align}
\label{set_uT}
\{Wf ;\ f \in C_0^\infty((T-r, T) \times \Gamma; E) \}
\end{align}
is dense in $L^2(M(\Gamma,r);E)$.
\end{Lemma}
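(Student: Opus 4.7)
The plan is to prove the claim by a duality argument. By Theorem \ref{th_finite_speed}, any $Wf$ with $f$ supported in $(T-r,T)\times\Gamma$ is supported in $M(\Gamma,r)$, so the set in (\ref{set_uT}) is contained in $L^2(M(\Gamma,r);E)$. It therefore suffices, by Hahn--Banach, to show that if $\phi\in L^2(M(\Gamma,r);E)$ (extended by zero to $M$) is orthogonal to every $Wf$ of the stated form, then $\phi=0$ almost everywhere.

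The next step is to translate the orthogonality condition into information on the adjoint wave. Let $v$ be the solution of (\ref{adjoint_of_W}) with final datum $-\phi$. The pointwise identity $\pair{Wf,\phi}_{L^2(M;E)}=\pair{f,\nabla_\nu v}_{L^2((0,T)\times\p M;E)}$ was checked in the excerpt for smooth $\phi$; by Theorem \ref{th_regularity_neumann} the trace map $\phi\mapsto \nabla_\nu v$ is bounded from $L^2(M;E)$ into $L^2((0,T)\times\p M;E)$, and both sides of this pairing depend continuously on $\phi\in L^2$, so the identity extends to all $\phi\in L^2(M;E)$. Testing against arbitrary $f\in C_0^\infty((T-r,T)\times\Gamma;E)$ then yields $\nabla_\nu v=0$ on $(T-r,T)\times\Gamma$, while the Dirichlet boundary condition in (\ref{adjoint_of_W}) already gives $v=0$ on that set.

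The heart of the argument is a reflection trick that converts the one-sided data at time $T$ into a bona fide Cauchy problem on a symmetric time slab. Set $u=\p_t v$. Then $u$ still solves $(\p_t^2+P+V)u=0$ with $u|_{(0,T)\times\p M}=0$, and at $t=T$ one has $u(T)=-\phi$ and, crucially, $\p_t u(T)=\p_t^2 v(T)=-(P+V)v(T)=0$ since $v(T)=0$. Define the even reflection
\[
\tilde u(t,x)=\begin{cases} u(t,x), & 0<t\le T,\\ u(2T-t,x), & T<t<2T.\end{cases}
\]
Because $\p_t u$ vanishes at $t=T$, $\tilde u$ solves $(\p_t^2+P+V)\tilde u=0$ in the distributional sense on $(0,2T)\times M$. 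Moreover, $\tilde u|_{(0,2T)\times\p M}=0$, and on $(T,T+r)\times\Gamma$ the trace $\nabla_\nu \tilde u(t,x)=\nabla_\nu u(2T-t,x)=\p_t\nabla_\nu v(2T-t,x)=0$; combining with what we already know on $(T-r,T)\times\Gamma$ gives $\nabla_\nu\tilde u=0$ on the whole of $(T-r,T+r)\times\Gamma$.

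Now apply the sharp unique continuation result, Theorem \ref{th_uniq_cont}, to $\tilde u$ on the time interval $(T-r,T+r)$, which has length $2r$ and is centred at $T$: one obtains $\tilde u(T,x)=0$ for $x\in M(\Gamma,r)^\inter$. But $\tilde u(T)=u(T)=-\phi$, hence $\phi=0$ on $M(\Gamma,r)^\inter$, and since $\phi$ is supported in $M(\Gamma,r)$ and the boundary of this set has measure zero, $\phi=0$ in $L^2(M;E)$. I expect the main subtlety to be bookkeeping the regularity of $v$ (which only has energy-type regularity since $\phi\in L^2$) so that the reflected function $\tilde u$ lies in some $H^s$ to which Theorem \ref{th_uniq_cont} genuinely applies, and verifying that the trace conditions for $\tilde u$ hold in the appropriate distributional sense; this is handled by Theorem \ref{th_regularity_neumann} together with the trace discussion at the end of Section \ref{sec_direct}.
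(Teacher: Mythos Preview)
Your proof is correct and follows essentially the same route as the paper: a duality argument reducing to the vanishing of the Cauchy data of the adjoint solution on $(T-r,T)\times\Gamma$, a reflection across $t=T$, and then Theorem~\ref{th_uniq_cont}. The only variation is that the paper applies the \emph{odd} reflection directly to $v$, whereas you first pass to $u=\partial_t v$ and take the \emph{even} reflection; since the time derivative of the odd reflection of $v$ is precisely the even reflection of $\partial_t v$, the two arguments are the same up to one differentiation. Your phrasing has the minor cosmetic advantage that the conclusion $\tilde u(T)=-\phi=0$ matches the literal statement of Theorem~\ref{th_uniq_cont}, while the paper implicitly uses that the unique continuation also kills $\partial_t v(T)$ (which follows by applying the theorem at nearby central times, or from the full diamond vanishing in the underlying Eller--Toundykov result).
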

\begin{proof}
By the finite speed of propagation,
the set (\ref{set_uT}) is a subspace of $L^2(M(\Gamma,r);E)$.
It is enough to show that the orthogonal complement of this subspace contains only the origin. 
Suppose that $\phi \in L^2(M(\Gamma,r);E)$ satisfies 
\begin{align}
\label{ortho_lem_dens}
(Wf,\phi)_{L^2(M;E)}=0, \quad 
f \in  C_0^\infty((T-r, T) \times \Gamma; E). 
\end{align}
Recall that $W^* \phi = \nabla_\nu v|_{(0,T) \times \p M}$
where $v$ is the solution of (\ref{adjoint_of_W}). 
Hence (\ref{ortho_lem_dens}) implies that $\nabla_\nu v|_{(T-r,T) \times \Gamma} = 0$. 
We extend $v$ across the surface $t=T$ by using the odd reflection $v(t,x)=-v(2T-t,x)$. Then the extension satisfies the wave equation 
\begin{align*}
%\label{adjoint_of_W}
&(\partial_{t}^{2}+P+V) v(t,x)=0, \quad (0,2T)\times M,\\%\label{eq:wave}\\
&v|_{(0,2T)\times\partial M}=0, \quad (0,2T)\times\partial M,\notag\\
&v|_{t=T}=0,\ \partial_{t} v|_{t=T}=-\phi, \quad \text{in}\;M,\notag
\end{align*}
together with the additional boundary condition 
$\nabla_\nu v|_{(T-r,T+r) \times \Gamma} = 0$.
Theorem \ref{th_uniq_cont} implies that $\phi = 0$.
Here we used also the fact
that the boundary of $M(\Gamma, r)$ is of measure zero \cite{Oksanen2011}.
\end{proof}

As described in the scalar valued case in Section 4.4 of \cite{Lassas2012},
in order to determine the cut distance $\sigma_\Gamma$ from the restricted Dirichlet-to-Neumann map, we need to use a perturbation argument that is based on a refined version of approximate controllability and modified domains of influence. 
Let $\Gamma \subset \p M$ and $h : \Gamma \to \R$, and define
\begin{align*}
M(\Gamma, h) & = \{x \in M;\ 
\inf_{y \in \Gamma} (d_g(x, y) - h(y)) \le 0\},
\end{align*}
and denote for $T > 0$
\def\B{\mathcal B}
\begin{align*}
\B(\Gamma, h; T) = \{(t, y) \in (0, T) \times \Gamma;\ T - h(y) < t \}. 
\end{align*}
If $r > 0$ and $h(y) = r$, $y \in \Gamma$,
then $M(\Gamma, h)$ coincides with our earlier definition of $M(\Gamma, r)$.
We denote by $1_S$ the indicator function of a set $S \subset M$,
that is, $1_S(x) = 1$ if $x \in S$ and $1_S(x) = 0$ otherwise.

For the convenience of the reader, we give a proof of the following lemma. An analogous lemma is stated in \cite{Lassas2012}
without a proof. 

\begin{Lemma}
\label{lem_density_h}
Let $T > 0$ and suppose that
$\Gamma \subset \p M$ is open.
Let $L \in \N$, let $\Gamma_\ell \subset \Gamma$ be open
and let $h_\ell \in C(\bar \Gamma_\ell)$, $\ell = 1, \dots, L$.
We define 
\begin{align}
\label{def_h}
h = \sum_{\ell = 1}^L h_\ell 1_{\Gamma_\ell},
\end{align}
and suppose that $h \le T$ pointwise.
Then 
\begin{align}
\label{density_h_Wf}
\{ Wf;\ f \in C_0^\infty(\B(\Gamma, h; T); E) \}
\end{align}
is dense in $L^2(M(\Gamma, h); E)$.
\end{Lemma}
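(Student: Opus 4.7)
The plan is to mimic the duality proof of Lemma \ref{lem_density}, but to feed the unique continuation machine a \emph{local} ball centered at each interior point of $M(\Gamma, h)$ so as to cope with the piecewise height $h$. By the finite speed of propagation (Theorem \ref{th_finite_speed}), the set in (\ref{density_h_Wf}) is contained in $L^2(M(\Gamma, h); E)$, so it suffices to show that its orthogonal complement is trivial. Let $\phi \in L^2(M(\Gamma, h); E)$ satisfy
\[
(Wf, \phi)_{L^2(M;E)} = 0, \qquad f \in C_0^\infty(\B(\Gamma, h; T); E).
\]
Since $W^* \phi = \nabla_\nu v|_{(0,T) \times \p M}$, where $v$ solves (\ref{adjoint_of_W}) with $\partial_t v|_{t=T} = -\phi$, this means that $\nabla_\nu v$ vanishes on $\B(\Gamma,h;T)$, that is, at every $(t,y)$ with $y \in \Gamma_\ell$ and $T - h_\ell(y) < t < T$. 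Extend $v$ across $t=T$ by the odd reflection $v(t,x) = -v(2T-t,x)$ as in Lemma \ref{lem_density}; the extension still satisfies the wave equation, $v|_{(0,2T) \times \p M} = 0$, and now $\nabla_\nu v$ vanishes on the symmetric set $\{(t,y)\in(0,2T)\times\Gamma_\ell : |t-T| < h_\ell(y)\}$ for each $\ell$.

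I would then argue pointwise. Fix $x_0 \in M(\Gamma,h)^\inter$; by definition of $M(\Gamma,h)$ together with $h = \sum h_\ell 1_{\Gamma_\ell}$, there are an index $\ell$ and a point $y_0 \in \Gamma_\ell$ with $d_g(x_0, y_0) < h_\ell(y_0)$. Choose $\epsilon>0$ so small that $d_g(x_0, y_0) < h_\ell(y_0) - 3\epsilon$, and use the continuity of $h_\ell$ on $\bar\Gamma_\ell$ to find an open neighbourhood $U \subset \Gamma_\ell$ of $y_0$ with $h_\ell > h_\ell(y_0) - \epsilon$ on $U$. Set $T_* = h_\ell(y_0) - \epsilon$. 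Then $\nabla_\nu v$ and $v$ both vanish on $(T - T_*, T+ T_*) \times U$, and $v$ vanishes on all of $(0,2T) \times \p M$.

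Applying Theorem \ref{th_uniq_cont} (Tataru) on the time interval $(T + s - T_*, T + s + T_*)$ for $|s| < \epsilon$, which is contained in the region where the Cauchy data of $v$ on $U$ vanishes, yields $v(T+s, x) = 0$ for every $x$ with $d_g(x, U) < T_* - |s|$. Since $d_g(x_0, y_0) < T_* - 2\epsilon$, this holds on a fixed open neighbourhood $V$ of $x_0$ for all $|s| < \epsilon$. Thus $v \equiv 0$ on $(T-\epsilon, T + \epsilon) \times V$, which forces $\phi = -\partial_t v|_{t=T} = 0$ on $V$. Covering $M(\Gamma,h)^\inter$ by such neighbourhoods and invoking the fact that $\p M(\Gamma, h)$ has measure zero (as in \cite{Oksanen2011}, used already in Lemma \ref{lem_density}), we conclude $\phi = 0$ in $L^2(M(\Gamma,h);E)$, proving density.

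The main obstacle, relative to the constant-height case of Lemma \ref{lem_density}, is the variable and merely piecewise-continuous nature of $h$: Tataru's theorem must be applied with a localized radius $T_* = h_\ell(y_0) - \epsilon$ rather than globally, and one has to slide the time window in $s$ to promote the one-slice conclusion $v(T, x_0)=0$ to vanishing of $v$ on an open spacetime neighbourhood, which is what actually yields $\partial_t v(T, \cdot) = -\phi$ equal to zero near $x_0$. Ensuring this slide stays inside the region where the Cauchy data of $v$ vanishes is where continuity of each $h_\ell$ on $\bar\Gamma_\ell$ is genuinely used.
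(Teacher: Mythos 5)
Your proof takes a genuinely different route from the paper's. The paper approximates $h$ from below by a simple function $h_\epsilon=\sum_j T_j 1_{\Gamma_j}$ with disjoint open pieces, proves density for simple functions by induction on the number of pieces (splitting $M(\Gamma,h_\epsilon)=M_0\cup M_1$ and approximating $1_{M_0}\psi$ and $1_{M_1}(\psi-1_{M_0}\psi)$ separately via Lemma \ref{lem_density}), and then passes to the limit using the volume convergence $|M(\Gamma,h_\epsilon)|\to|M(\Gamma,h)|$ from \cite{Oksanen2011}. You instead run the duality and unique continuation argument of Lemma \ref{lem_density} directly, localized near each point; this is a legitimate and arguably more self-contained strategy. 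Your idea of sliding the time window to promote the single-slice conclusion to vanishing of $v$ on a spacetime neighbourhood, hence to $\partial_t v(T,\cdot)=0$ near $x_0$, is the right one, although the interval you name, $(T+s-T_*,T+s+T_*)$, is \emph{not} contained in the region where the Cauchy data vanishes when $s\neq 0$; you must apply Theorem \ref{th_uniq_cont} on the interval of half-length $T_*-|s|$ centred at $T+s$, which is what your stated conclusion actually corresponds to. Also, where the sets $\Gamma_\ell$ overlap, $h(y_0)$ is the \emph{sum} of the $h_\ell(y_0)$, so reducing to a single $h_\ell$ discards part of the region where the data is known to vanish; it is cleaner to work with the lower semicontinuity of $h$ itself to produce the neighbourhood $U$.

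There is, however, one genuine gap. The claim that every $x_0\in M(\Gamma,h)^\inter$ admits $y_0$ with $d_g(x_0,y_0)<h(y_0)$ is false: take $M$ the closed unit disk, $\Gamma=\p M$ and $h\equiv 1$; then $M(\Gamma,h)=M$, so the centre is an interior point, yet $d_g(0,y)=1=h(y)$ for every $y\in\Gamma$. Your argument therefore only shows $\phi=0$ on the open set $\{x:\ \inf_{y\in\Gamma}(d_g(x,y)-h(y))<0\}$, and the measure-zero fact you invoke at the end concerns $\p M(\Gamma,h)$, which does not cover interior points at which the infimum vanishes. To close the gap you must show that $\{x:\ \inf_{y\in\Gamma}(d_g(x,y)-h(y))=0\}$ is null, e.g. via the eikonal property $|\nabla\rho|=1$ a.e. for $\rho(x)=\inf_{y}(d_g(x,y)-h(y))$ together with a Lebesgue density argument, or equivalently via the convergence $|M(\Gamma,h-\epsilon)|\to|M(\Gamma,h)|$ --- which is precisely the input from \cite{Oksanen2011} that the paper's own proof uses in its limiting step. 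With that supplement your argument is sound.
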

\begin{proof}
Let $\epsilon > 0$.
There is a simple function 
\begin{align*}
h_\epsilon(y) = \sum_{j=1}^J T_j 1_{\Gamma_j}(y),
\end{align*}
where $J \in \N$, $T_j \in (0,T)$ and $\Gamma_j \subset \Gamma$ are open and disjoint, 
such that $h < h_\epsilon + \epsilon$ almost everywhere on $\Gamma$ and 
$h_\epsilon < h$ on $\bar \Gamma$, see e.g. \cite[Lemma 4.2]{Oksanen2011}.

We show by induction on $J$ that the density holds when $h = h_\epsilon$.
The base case $J=1$ follows from Lemma \ref{lem_density}.
We define $\tilde h_\epsilon = h_\epsilon - T_J 1_{\Gamma_J}$,
and use the shorthand notation $M_0 = M(\Gamma, \tilde h_\epsilon)$
and $M_1 = M(\Gamma_J, T_J)$.
Let $\psi \in L^2(M(\Gamma, h_\epsilon); E)$.
Note that 
$M(\Gamma, h_\epsilon) = M_0 \cup M_1$.
By the induction hypothesis there is a sequence of smooth functions 
$(f_k^0)_{k=1}^\infty$ supported in $\B(\Gamma, \tilde h_\epsilon; T))$ 
such that 
\begin{align*}
W f_k^0 \to  1_{M_0} \psi, \quad k \to \infty.
\end{align*}
Moreover, by Lemma \ref{lem_density} there is a sequence of smooth functions 
$(f_k^1)_{k=1}^\infty$ supported in $\B(\Gamma_{J}, T_J; T))$ such that
\begin{align*}
W f_k^1 \to  1_{M_1} (\psi - 1_{M_0} \psi), \quad k \to \infty.
\end{align*}
Thus $W (f_k^0+f_k^1) \to \psi$.
This proves that the density holds for $h_\epsilon$.
%\begin{align*}
%u^{f_k^0+f_k^1}(T) &\to  1_{M_0} (1 - 1_{M_1}) \psi + 1_{M_1} \psi 
%= (1_{M_0 \setminus M_1} + 1_{M_1}) \psi 
%\\&= 1_{M_0 \cup M_1} \psi = \psi.
%\end{align*}

Suppose now that $\psi \in L^2(M(\Gamma, h);E)$.
We have shown that there is a smooth function 
$f$ supported in $\B(\Gamma, h_\epsilon; T)$ such that 
\begin{align*}
\norm{1_{M(\Gamma, h_\epsilon)} \psi - Wf}_{L^2(M;E)}^2 < \epsilon.
\end{align*}
Thus
\begin{align*}
%\label{convergence_in_uniq_cont_lem}
\norm{ \psi - Wf}_{L^2(M;E)}^2
&< \epsilon + \left(\int_{M(\Gamma, h)} |\psi|_E^2 dx - \int_{M(\Gamma, h_\epsilon)} |\psi|_E^2 dx\right).
\end{align*}
The Riemannian volumes converge $|M(\Gamma, h_\epsilon)| \to |M(\Gamma, h)|$ as $\epsilon \to 0$, see \cite[Lemma 4.3]{Oksanen2011}.
Thus the claimed density holds.
\end{proof}

\section{Local reconstruction near the boundary}

In this section we show how to recover the coefficients of $P+V$, up to the gauge invariances,
near the accessible part of the boundary $\S$
given the map $\Lambda_\S^{2T}$.
The main novelty is the recovery of the connection and potential by using such sources $f$ that $Wf$ localizes near a point in $M$.
The basic idea of finding localized $Wf$ 
given $\Lambda_\S^{2T}$ is described in Lemma \ref{lem_orthogonal}, and the inner products appearing in this lemma are shown to be determined by $\Lambda_\S^{2T}$ in Corollary \ref{cor_test_convergence}.
The localization technique is refined in Lemma \ref{lem_conv_to_delta}, and the localized solutions are then used to probe the connection and potential in the proof of Theorem \ref{th_local_connection}.

\subsection{Inner products}

We begin by generalizing an integration by parts technique 
due to Blagovestchenskii in the $1+1$ dimensional scalar valued case \cite{Blagovescenskiui1971}.
For a multidimensional scalar valued case this was first used by Belishev  \cite{Belishev1987}.

\begin{Lemma}
\label{lem_Blago}
Let $T > 0$, let $\S \subset \p M$ be open, and let $f$ and $h$ be functions in $C_0^\infty((0,2T) \times \S;E)$.
Then
\begin{align*}
&\pair{W f, Wh}_{L^2(M; E)}
\\&\qquad= \pair{f, J \Lambda_\S^{2T} h}_{L^2((0,2T) \times \S;E)} - \pair{f, (\Lambda_\S^{2T})^* J h}_{L^2((0,2T) \times \S;E)},
\end{align*}
where $J$ is the integral operator in the time variable with the kernel $\mbox{\rm sgn}(t-s) 1_L(t,s)/4$.
Here 
$L=\{(s,t) \in \R^2:\;0\leq t+s\leq 2T,\;t,s>0\}$.
\end{Lemma}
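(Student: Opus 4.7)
The approach is the classical Blagovestchenskii trick, adapted to the vector-valued setting. Let $u$ and $v$ solve (\ref{eq:wave}) with boundary data $f$ and $h$ respectively, and define $\Psi(t, s) := \pair{u(t), v(s)}_{L^2(M;E)}$, so that $\pair{Wf, Wh} = \Psi(T, T)$. Differentiating under the inner product and using the wave equation gives
\[
(\partial_t^2 - \partial_s^2)\Psi(t, s) = \pair{u(t), (P+V) v(s)}_{L^2(M;E)} - \pair{(P+V) u(t), v(s)}_{L^2(M;E)}.
\]
The pointwise symmetry (\ref{symm_V}) of $V$ kills the zeroth-order contribution, and for $P = \nabla^*\nabla$ I would apply (\ref{intpart_nabla}) twice. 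The common bulk term $\pair{\nabla u(t), \nabla v(s)}_{L^2(M;E\otimes T^*M)}$ cancels, leaving only boundary integrals on $\p M$. Since $f$ and $h$ are supported in $\S$ and $\nabla_\nu u|_\S = \Lambda f$, $\nabla_\nu v|_\S = \Lambda h$ with $\Lambda := \Lambda_\S^{2T}$, this reduces to
\[
(\partial_t^2 - \partial_s^2)\Psi(t, s) = \pair{f(t), (\Lambda h)(s)}_{L^2(\S;E)} - \pair{(\Lambda f)(t), h(s)}_{L^2(\S;E)} =: F(t, s),
\]
whose right-hand side is expressed entirely in terms of the given boundary data and the DN map.

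The vanishing Cauchy data of $u$ and $v$ imply that $\Psi = \p_t \Psi = 0$ along $\{t = 0\}$ and $\Psi = \p_s \Psi = 0$ along $\{s = 0\}$. Regarding $\Psi$ as the solution of the inhomogeneous $1{+}1$-dimensional wave equation on the quadrant $\{t, s > 0\}$ with these vanishing data, d'Alembert's formula applies. For the target point $(T, T)$, the backward characteristic triangle is $\Delta = \{(\tau, \sigma) :\ 0 < \tau < \sigma,\ \tau + \sigma < 2T\}$; this triangle lies in $\{\sigma > 0\}$, so no reflection off the boundary $\{s = 0\}$ is required, and
\[
\Psi(T, T) = \tfrac{1}{2}\iint_\Delta F(\tau, \sigma)\, d\tau\, d\sigma.
\]

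To recast this as the stated inner products, I would symmetrise under the swap $(\tau, \sigma) \leftrightarrow (\sigma, \tau)$, which converts the indicator $\mathbf{1}_\Delta = \mathbf{1}_{\tau < \sigma}\, \mathbf{1}_L$ into an asymmetric weight proportional to $\mathrm{sgn}(\tau - \sigma)\, \mathbf{1}_L$, at the cost of relabelling the arguments of $f$, $h$ and $\Lambda$. Collecting the four resulting bilinear terms so that every occurrence of $\Lambda h$ is gathered into $\pair{f, J \Lambda h}$ and every occurrence of $\Lambda f$ is rewritten, via $\pair{\Lambda f, Jh} = \pair{f, \Lambda^* Jh}$, into the desired adjoint form, recovers the kernel $\mathrm{sgn}(t-s)\, \mathbf{1}_L(t,s)/4$. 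The principal obstacle is precisely this algebraic bookkeeping, together with careful tracking of the time supports in $(0, 2T)$; everything upstream is a direct application of the pointwise symmetry (\ref{symm_V}), the integration-by-parts identity (\ref{intpart_nabla}), and d'Alembert's formula.
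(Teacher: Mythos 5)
Your route is the same as the paper's: define $w(t,s)=\pair{u^f(t),u^h(s)}_{L^2(M;E)}$, use the pointwise symmetry of $V$ and the Green identity (\ref{intpart_nabla}) to show that $w$ solves the $1+1$-dimensional wave equation with source
$F(t,s)=\pair{f(t),\Lambda h(s)}_{L^2(\S;E)}-\pair{\Lambda f(t),h(s)}_{L^2(\S;E)}$, and evaluate at $(T,T)$. Everything up to and including the d'Alembert representation
\begin{align*}
w(T,T)=\frac12\iint_{\Delta}F(\tau,\sigma)\,d\tau\,d\sigma,\qquad \Delta=\{0<\tau<\sigma,\ \tau+\sigma<2T\},
\end{align*}
is correct, and is in fact more detailed than the paper's one-line conclusion.

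The gap is in the final conversion. Relabelling $(\tau,\sigma)\leftrightarrow(\sigma,\tau)$ is a tautology and cannot change the kernel $\tfrac12\,1_{\{\tau<\sigma\}}1_L$ into $\pm\tfrac14\,\mbox{\rm sgn}(\tau-\sigma)\,1_L$ against the same integrand $F$ (note $F(\sigma,\tau)$ is not $\pm F(\tau,\sigma)$ for general $f,h$). The two kernels differ by $\tfrac14\,1_L$, so your triangle formula agrees with the stated one precisely when
\begin{align*}
\iint_{L}F(\tau,\sigma)\,d\tau\,d\sigma=0,
\end{align*}
and this identity is not bookkeeping: it is a second use of the wave structure. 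One way to get it is to integrate $(\p_t^2-\p_s^2)w=F$ over $L$ and apply the divergence theorem; the flux through $\{t=0\}$ and $\{s=0\}$ vanishes by the Cauchy data, and the flux through $\{t+s=2T\}$ is the integral of a tangential derivative of $w$, hence equals $w(2T,0)-w(0,2T)=0$. Equivalently, run d'Alembert a second time with $s$ as the time variable, using the Cauchy data $w=\p_s w=0$ on $\{s=0\}$ that your argument never touches; this gives $w(T,T)=-\tfrac12\iint_{\Delta'}F$ over the reflected triangle $\Delta'$, and averaging the two representations is what produces the antisymmetric kernel supported on all of $L$. (Carrying this out I find the kernel $\mbox{\rm sgn}(s-t)\,1_L/4$ with the convention $(J\phi)(t)=\int K(t,s)\phi(s)\,ds$; reconciling this with the paper's $\mbox{\rm sgn}(t-s)$ is a matter of which slot of the kernel is the output variable and is immaterial for Corollary \ref{cor_test_convergence}, but the extra identity above is genuinely needed to reach a kernel of this form.)
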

\begin{proof} 
We write $u^f = u$ for the solution of (\ref{eq:wave})
and define the function $w(t,s) = \langle u^{f}(t),u^{h}(s)\rangle_{L^{2}(M;E)}$. We have
\begin{align*}
&(\partial_{t}^{2}-\partial_{s}^{2})w(t,s) =\langle \partial_{t}^{2}u^{f}(t),u^{h}(s)\rangle_{L^{2}(M;E)}-\langle u^{f}(t),\partial_{s}^{2}u^{h}(s)\rangle_{L^{2}(M;E)}
\\
&\quad=-\langle \nabla^*\nabla u^{f}(t),u^{h}(s)\rangle_{L^{2}(M;E)}+\langle u^{f}(t),\nabla^*\nabla u^{h}(s)\rangle_{L^{2}(M;E)}
\\
&\quad=-\int_{\partial M}\langle \nabla_{\nu} u^{f}(t),u^{h}(s)\rangle_{E}\,dS 
+ \int_{\partial M}\langle u^{f}(t), \nabla_{\nu} u^{h}(s)\rangle_{E}\,dS\\
&\quad=
\int_{\partial M}\langle f(t), \Lambda^{2T}_\S h(s)\rangle_{E}\,dS
-\int_{\partial M}\langle \Lambda^{2T}_\S f(t), h(s)\rangle_{E}\,dS.
\end{align*}
Since $w(0,s)=w(t,0)=\partial_{t}w(0,s)=\partial_{s}w(0,s)=0$ and $w$ solves the above $1+1$ dimensional wave equation, the result follows by considering $w(T,T)$.
\end{proof}

\begin{Corollary}
\label{cor_test_convergence}
Let $T > 0$, $\S \subset \p M$ be open.
Then $\Lambda_\S^{2T}$ determines the inner products
\begin{align}
\label{the_inner_products}
\pair{Wf, Wh}_{L^2(M;E)}, \quad f,h \in C_0^\infty((0,2T) \times \S;E).
\end{align}
Moreover, $\Lambda_\S^{2T}$ determines,
for all $(f_j)_{j=1}^\infty \subset C_0^\infty((0,2T) \times \S;E)$, if 
the sequence $(Wf_j)_{j=1}^\infty$
converges, in the strong or weak sense, in $L^2(M;E)$.
\end{Corollary}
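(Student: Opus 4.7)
The first claim is immediate from Lemma \ref{lem_Blago}. The right-hand side of the identity there is expressed entirely in terms of $\Lambda_\S^{2T}$, the explicit time-integral operator $J$, and pairings on $L^2((0,2T)\times\S;E)$. Since the Hermitian bundle $E|_\S$ is part of the data, the inner product on $L^2((0,2T)\times\S;E)$ is determined, and with it the adjoint $(\Lambda_\S^{2T})^*$. Hence the pairings in (\ref{the_inner_products}) are computable from $\Lambda_\S^{2T}$ and $E|_\S$.

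The plan for strong convergence is to apply the Cauchy criterion in the Hilbert space $L^2(M;E)$. The sequence $(Wf_j)$ converges strongly if and only if
\[
\norm{W(f_j-f_k)}_{L^2(M;E)}^2
= \pair{Wf_j,Wf_j} - \pair{Wf_j,Wf_k} - \pair{Wf_k,Wf_j} + \pair{Wf_k,Wf_k}
\]
tends to zero as $j,k\to\infty$, and each of these inner products is determined by the first part of the corollary.

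For weak convergence, the plan is to combine finite speed of propagation (Theorem \ref{th_finite_speed}) with approximate controllability (Lemma \ref{lem_density}). By finite speed of propagation, every $Wf$ with $f\in C_0^\infty((0,2T)\times\S;E)$ is supported in the closed domain of influence $M(\S,2T)$, so the entire sequence $(Wf_j)$ lies in the closed subspace $L^2(M(\S,2T);E)\subset L^2(M;E)$. Because this subspace is closed, weak convergence of $(Wf_j)$ in $L^2(M;E)$ coincides with weak convergence in $L^2(M(\S,2T);E)$. By Lemma \ref{lem_density}, the family $\{Wh:h\in C_0^\infty((0,2T)\times\S;E)\}$ is dense in $L^2(M(\S,2T);E)$. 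Hence $(Wf_j)$ converges weakly if and only if it is norm-bounded and $\pair{Wf_j,Wh}_{L^2(M;E)}$ converges for every test $h\in C_0^\infty((0,2T)\times\S;E)$; boundedness is recorded by $\norm{Wf_j}^2 = \pair{Wf_j,Wf_j}$. All these quantities are determined by $\Lambda_\S^{2T}$ via the first part. The only substantive point beyond Hilbert-space bookkeeping is the use of finite speed of propagation to confine the sequence to a subspace on which approximate controllability makes a countable dense set of test vectors available from the data itself; once this is in place there is no real obstacle.
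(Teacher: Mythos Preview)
Your argument is essentially correct and follows the same scheme as the paper, but two points deserve attention.

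First, to evaluate the right-hand side of Lemma~\ref{lem_Blago} you need the $L^2$-inner product on $(0,2T)\times\S$, and that requires the Riemannian surface measure $dS$ of $(\S,g)$, not merely the Hermitian structure on the fibers of $E|_\S$. Your sentence ``Since the Hermitian bundle $E|_\S$ is part of the data, the inner product on $L^2((0,2T)\times\S;E)$ is determined'' conflates the fiber metric with the base measure. The paper closes this gap explicitly: $\Lambda_\S^{2T}$ determines the boundary distances $d_g(x,y)$ for $x,y\in\S$, hence $g|_\S$ and $dS$, so the boundary pairings (and the adjoint $(\Lambda_\S^{2T})^*$) are indeed computable from $\Lambda_\S^{2T}$ alone.

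Second, in the weak-convergence step the radius is off. Since $Wf=u^f(T)$ depends only on $f|_{(0,T)}$, every $Wf$ with $f\in C_0^\infty((0,2T)\times\S;E)$ is supported in $M(\S,T)$, not $M(\S,2T)$; and Lemma~\ref{lem_density} (with $\Gamma=\S$, $r=T$) yields density of $\{Wh\}$ in $L^2(M(\S,T);E)$, which is what you actually need. In general $\{Wh\}$ is \emph{not} dense in $L^2(M(\S,2T);E)$. With $T$ in place of $2T$ your argument goes through and is in fact more explicit than the paper's, which for this step simply refers to \cite[Lemma~3]{Lassas2012}.
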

\begin{proof}
We allow the metric tensor $g$ to be {\it a priori} unknown on $\S$. 
However,
 $\Lambda_\S^{2T}$ determines the distances $d_g(x,y)$, $x,y \in \S$,
see e.g. \cite[Section 2.2]{Dahl2009},
and these distances determine $g$ on $\S$.
Thus we can assume without loss of generality that the Riemannian volume measure $dS$ of $(\S, g)$ is known, and Lemma 
\ref{lem_Blago} implies that $\Lambda_\S^{2T}$ determines the inner products (\ref{the_inner_products}).

For the second claim, we observe that the inner products (\ref{the_inner_products}) can be used to determine if
$(Wf_j)_{j=1}^\infty$ is a Cauchy sequence in $L^2(M;E)$.
This allows us to determine if $(Wf_j)_{j=1}^\infty$
converges in the strong sense.
Moreover, using again (\ref{the_inner_products})
we can determine if $(Wf_j)_{j=1}^\infty$ is bounded in $L^2(M;E)$,
and we may test the weak convergence 
analogously to \cite[Lemma 3]{Lassas2012}.
\end{proof}

%and recall the shorthand notation $Wf = u(T)$, where $u$ is the solution of (\ref{eq:wave}). 
% where $f \in C_0^\infty((0,T) \times \S; E)$.
%Notice that (\ref{assumption_T}) implies $M(\S,T) = M$.
%A sequence $(\phi_j)_{j=1}^\infty$ converges weakly in $L^2(M; E)$ if and only if
%the sequence is bounded\footnote{
%The boundedness is hard to test in the non-symmetric case. 
%The integration by parts technique should work if one of the factors in the inner product is a solution to the adjoint wave eq and the other is a solution to the original eq. But we can not compute norms in the non-symmetric case. 
%We should be able to mitigate this difficulty by assuming that all the geodesics (reflected on $\p M$) intersect $\S$ before time $T$.
%Then it should follow that the set (\ref{set_uT}) for $r=T$ is the whole 
%$L^2(M;E)$ and the geometric condition should also be necessary for this. This result probably can not be found in the literature as such for systems.
%The result allows us to test for weak convergence under the additional geometric assumption. 
%}
%and 
%$\pair{\phi_j, Wh}$ converges for all $h \in C_0^\infty((0,T) \times \S; E)$.
%In particular, we can test for 
%$(f_j)_{j=1}^\infty \subset C_0^\infty((0,T) \times \S; E)$ if the sequence $(W (\mu f_j))_{j=1}^\infty$
%converges weakly by using the inner products (\ref{the_inner_products}).

\subsection{Reconstruction of the metric tensor}

Our reconstruction of the metric tensor is based on the proof
in \cite{Lassas2012}. 
The following lemma is a variation of \cite[Lemma 6]{Lassas2012}.
We give a short proof for the convenience of the reader.

\begin{Lemma}
\label{lem_domi_test}
Let $T > 0$, $s \in (0,T]$, let $\Sigma, \Gamma \subset \p M$ be open and let $h : \Gamma \to [0,T]$.
%Let $\mu \in C^\infty(\overline{\Sigma \cup \Gamma})$
%be strictly positive.
Suppose that $h$ is of form (\ref{def_h}).
Then the following are equivalent:
\begin{itemize}
\item[(i)] $M(\Sigma, s) \subset M(\Gamma, h)$.
\item[(ii)] For all $f_0 \in C_0^\infty(\B(\Sigma, s; T); E)$
there is a sequence $(f_j)_{j=1}^\infty$ in $C_0^\infty(\B(\Gamma, h; T); E)$
such that $(W(f_0 - f_j))_{j=1}^\infty$ 
converges to zero in $L^2(M;E)$.
\end{itemize}
\end{Lemma}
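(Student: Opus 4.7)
The plan is to prove the two implications separately, using finite speed of propagation (Theorem \ref{th_finite_speed}) to localize the supports of the waves and the approximate controllability results (Lemmas \ref{lem_density} and \ref{lem_density_h}) to produce the relevant approximating sequences.

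For (i) $\Rightarrow$ (ii), the first step is to observe that $\supp(Wf_0) \subset M(\Sigma, s)$ for every $f_0 \in C_0^\infty(\B(\Sigma, s; T); E)$. This follows from Theorem \ref{th_finite_speed} via a finite partition of unity: one covers the compact set $\supp(f_0) \subset \B(\Sigma, s; T) = (T-s, T) \times \Sigma$ by open slabs $(T - r, T) \times \Sigma'$ with $\Sigma' \subset \Sigma$ open and $r \in (0, s)$, applies the second part of Theorem \ref{th_finite_speed} to each piece, and notes that $M(\Sigma', r) \subset M(\Sigma, s)$. Hypothesis (i) then places $Wf_0$ in $L^2(M(\Gamma, h); E)$, and Lemma \ref{lem_density_h} directly supplies a sequence $(f_j) \subset C_0^\infty(\B(\Gamma, h; T); E)$ with $Wf_j \to Wf_0$ in $L^2(M;E)$, which is (ii).

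For (ii) $\Rightarrow$ (i), I would argue by contrapositive. Suppose $x_0 \in M(\Sigma, s) \setminus M(\Gamma, h)$. Since $x_0 \notin M(\Gamma, h)$, the number $c := \inf_{y \in \Gamma}(d_g(x_0, y) - h(y))$ is strictly positive, and the triangle inequality yields $d_g(x, y) \ge h(y) + c/2$ for all $x \in B(x_0, c/2)$ and $y \in \Gamma$, whence $B(x_0, c/2) \cap M(\Gamma, h) = \emptyset$. Setting $U = \{x \in M : d_g(x, \Sigma) < s\}$, an open subset of $M(\Sigma, s)$, I would then produce a nonempty open subset $V \subset U \cap B(x_0, c/2)$: when $d_g(x_0, \Sigma) < s$ this is immediate since $x_0 \in U$; when $d_g(x_0, \Sigma) = s$, sliding $x_0$ a small distance back along a minimizing geodesic from the closest $y_0 \in \Sigma$ to $x_0$ produces some $x_0' \in U \cap B(x_0, c/2)$, around which one takes a small ball inside $U$. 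By construction $V \subset M(\Sigma, s)$ and $V \cap M(\Gamma, h) = \emptyset$.

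I would then pick a nonzero $\phi \in C_0^\infty(V; E)$ and, by Lemma \ref{lem_density}, select $f_0 \in C_0^\infty(\B(\Sigma, s; T); E)$ with $\norm{Wf_0 - \phi}_{L^2(M;E)} < \norm{\phi}_{L^2(M;E)}/2$. The same partition-of-unity argument as in the first step shows that for every $f_j \in C_0^\infty(\B(\Gamma, h; T); E)$ the wave $Wf_j$ is supported in $M(\Gamma, h)$ and hence vanishes on $V$. Therefore
\[\norm{W(f_0 - f_j)}_{L^2(M;E)} \ge \norm{Wf_0}_{L^2(V;E)} \ge \norm{\phi}_{L^2(M;E)} - \norm{Wf_0 - \phi}_{L^2(M;E)} > \frac{1}{2}\norm{\phi}_{L^2(M;E)},\]
contradicting (ii). The main obstacle I anticipate is the geometric construction of $V$ on the frontier $d_g(x_0, \Sigma) = s$: one needs existence of a minimizing geodesic from $y_0$ to $x_0$ (which follows from compactness of $(M, g)$) and then only has to move back an arbitrarily small distance to land strictly inside the open set $U$.
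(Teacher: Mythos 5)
Your proof is correct and follows essentially the same route as the paper: (i)$\Rightarrow$(ii) via the density statement of Lemma \ref{lem_density_h}, and (ii)$\Rightarrow$(i) by contrapositive, combining finite speed of propagation with the approximate controllability of Lemma \ref{lem_density} on an open set contained in $M(\Sigma,s)\setminus M(\Gamma,h)$. The only difference is cosmetic: the paper obtains that open set by citing \cite[Lemma 6]{Lassas2012} and tests against the fixed function $1_U$, whereas you construct the set directly (correctly, noting only that in a manifold with boundary the minimizer is a length-minimizing path rather than a geodesic) and use a norm lower bound via an approximating $\phi$.
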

\begin{proof}
The implication from (i) to (ii) follows from the density of (\ref{density_h_Wf}) in $L^2(M(\Gamma, h); E)$.
We will now show that (ii) implies (i).
We denote 
\begin{align*}
&M_0 = M(\Sigma, s),
\quad 
M_1 = M(\Gamma, h),
\\
&S_0 = \B(\Sigma, s; T), 
\quad 
S_1 = \B(\Gamma, h; T).
\end{align*}
Let us assume that (i) does not hold.
There is a nonempty open set $U \subset M_0$ such that $U \cap M_1 = \emptyset$,
see \cite[Lemma 6]{Lassas2012}.
By Lemma \ref{lem_density} there is a smooth function 
$f_0$ supported in $S_0$ such that $\int_U Wf_0 dx \ne 0$.
However, by finite speed of propagation $Wf|_U = 0$ for any 
$f$ supported in $S_1$. Thus 
\begin{equation*}
\pair{W (f_0 - f), 1_U}_{L^2(M;E)} = \pair{Wf_0, 1_U}_{L^2(M;E)} \ne 0,
\end{equation*}
for all $f$ supported in $S_1$ and (ii) does not hold.
\end{proof}

By Corollary \ref{cor_test_convergence}
we can determine, given the restricted Dirichlet-to-Neumann map  $\Lambda_\S^{2T}$, whether the condition (ii) in Lemma \ref{lem_domi_test} holds for a function $f_0$ and a sequence
$(f_j)_{j=1}^\infty$, assuming that $\Sigma, \Gamma \subset \S$.

\begin{Remark}
\label{rem_nonsymm}
Suppose for the moment that we weaken the symmetry assumptions 
by not requiring (\ref{symm_V}). Then the closest analogue of the identity in Lemma \ref{lem_Blago} allows us to compute the inner products 
$\pair{\tilde u(T), Wh}_{L^2(M;E)}$ 
where $\tilde u$ is the solution of \ref{eq:wave}
with $V$ replaced by its formal adjoint $V^*$.
It seems to be difficult to use such inner products to test for convergence as in the condition (ii). 
In the non-symmetric scalar valued case \cite{Kurylev2000a},
a global condition on the billiard flow of $(M,g)$
is assumed in order for the map 
$$
W : L^2((0,T) \times \S) \to L^2(M),
$$
not only to have a dense range, but to be surjective. 
In this case, it is easy to test for a variant of the condition (ii)
where convergence in the norm is replaced by weak convergence.
In \cite{Lassas2012} a similar difficulty is treated by imposing an asymptotic spectral condition of the type that was first studied in \cite{Hassell2002}.
\end{Remark}

Let $\Gamma \subset \p M$ be open and let $T > 0$.
We recall that the cut distance $\sigma_\Gamma$ is defined by (\ref{def_cut_distance}),
and define 
\begin{align}
\label{31.1}
&\sigma_\Gamma^T(y) = \min(\sigma_\Gamma(y), T), \quad y \in \Gamma,
\\ \nonumber
&M_\Gamma^T = \{\gamma(s;y,  \nu);\ y \in \Gamma,\ s \in [0, \sigma_\Gamma^T(y))\}.
\end{align}

\begin{Theorem}
\label{th_local_metric}
Let $T > 0$ and let $\Gamma \subset \p M$ be open. 
Then the Riemannian manifold $(\Gamma, g)$, the Hermitian vector bundle $E|_\Gamma$ and 
$\Lambda_\Gamma^{2T}$ determine %the Riemannian manifold 
$(M_\Gamma^T, g)$.
\end{Theorem}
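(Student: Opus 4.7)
The plan is to follow the Boundary Control approach of \cite{Lassas2012}. By Corollary \ref{cor_test_convergence}, $\Lambda_\Gamma^{2T}$ determines all pairings $\pair{Wf, Wh}_{L^2(M;E)}$ for $f, h \in C_0^\infty((0, 2T) \times \Gamma; E)$ and in particular allows one to detect strong convergence of sequences $(Wf_j)$ in $L^2(M;E)$. Combined with Lemma \ref{lem_domi_test}, this means we can decide, from the data alone, whether an inclusion $M(\Sigma, s) \subset M(\Gamma, h)$ holds for any open $\Sigma \subset \Gamma$, any $s \in (0, T]$, and any $h \le T$ of the form (\ref{def_h}). The bundle structure plays no essential role at this stage, as such inclusions can be probed using sections $f \cdot e$ with $f$ scalar and $e$ a local reference section of $E$ over $\Gamma$, reducing everything to the scalar analysis of \cite{Lassas2012}.

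The first step is to reconstruct the cut distance function $\sigma_\Gamma^T : \Gamma \to (0, T]$. The underlying geometric characterization is that $s < \sigma_\Gamma^T(y)$ if and only if $s < T$ and $\gamma(s; y, \nu)$ has $y$ as the unique nearest point in $\Gamma$ at distance exactly $s$. This property is encoded in the data by a family of inclusions of the form $M(V, s) \subset M(\Gamma, h_{V, \epsilon})$, where $V$ ranges over open neighborhoods of $y$ with $\bar V \subset \Gamma$, $\epsilon > 0$, and
\[
h_{V, \epsilon} = (s - \epsilon)\, 1_V + s \cdot 1_{\Gamma \setminus \bar V}
\]
is of the form (\ref{def_h}) because $V$ and $\Gamma \setminus \bar V$ are disjoint open sets. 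For $s < \sigma_\Gamma^T(y)$, the point $\gamma(s; y, \nu)$ lies in $M(V, s)$ but witnesses the failure of the above inclusion, since its distance to $V$ equals $s > s - \epsilon$ while its distance to $\Gamma \setminus V$ exceeds $s$ for small $V$. The converse is obtained by a standard limiting argument as $V \searrow \{y\}$ and $\epsilon \to 0$. This identifies the parameter set
\[
\mathcal P = \{(y, s) \in \Gamma \times [0, T) : s < \sigma_\Gamma^T(y)\},
\]
which is in bijection with $M_\Gamma^T$ via $(y, s) \mapsto \gamma(s; y, \nu)$ and inherits a smooth manifold structure from $\Gamma \times [0, T)$.

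The second step is to recover the metric tensor $g$ on $M_\Gamma^T$. In semigeodesic coordinates $g = ds^2 + h_{jk}(s, y)\, dy^j dy^k$, so it suffices to reconstruct the tangential metric $h_{jk}(s, y)$ at each $(y, s) \in \mathcal P$. For every $z \in \Gamma$ we reconstruct the boundary distance
\[
r_{(y, s)}(z) = d_g(\gamma(s; y, \nu), z)
\]
by the same type of inclusion tests, namely by identifying the infimum of those $r > 0$ for which $\gamma(s; y, \nu) \in M(W, r)$ holds for every open $W \ni z$. Using the given Riemannian structure of $\Gamma$ together with the expansion $r_{(y, s)}(y + \delta y)^2 = s^2 + h_{jk}(s, y)\, \delta y^j \delta y^k + O(|\delta y|^3)$ in a chart on $\Gamma$, we read off $h_{jk}(s, y)$ and thereby recover $(M_\Gamma^T, g)$.

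The main technical obstacle is converting the geometric characterization of $\sigma_\Gamma^T$ into a condition testable by the restricted class of piecewise-continuous $h$ in Lemma \ref{lem_density_h} and controlling the limit as the neighborhood $V$ shrinks to $\{y\}$. This delicate step is the heart of the argument in \cite[Proposition 1]{Lassas2012}, and it transfers unchanged to the present bundle-valued setting because the cut-locus analysis is purely metric and sees only the scalar part of the problem.
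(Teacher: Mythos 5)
Your overall strategy coincides with the paper's: Corollary \ref{cor_test_convergence} together with Lemmas \ref{lem_density_h} and \ref{lem_domi_test} lets one read off from $\Lambda_\Gamma^{2T}$ the relation $\{(\Sigma,s,h):\ M(\Sigma,s)\subset M(\Gamma,h)\}$, and the remainder is the purely geometric argument of Sections 4.2--4.4 of \cite{Lassas2012}, which the paper simply cites. Your reconstruction of $\sigma_\Gamma^T$ via the test functions $h_{V,\epsilon}=(s-\epsilon)\,1_V+s\,1_{\Gamma\setminus\bar V}$ is in the right spirit; the stated equivalence is not literally an ``if and only if'' (at $s=\sigma_\Gamma(y)$ the nearest point in $\Gamma$ can still be unique, e.g.\ at a conjugate point or where the geodesic exits $M$), but this is exactly the boundary case handled by the limiting arguments of \cite[Prop.~2]{Lassas2012} that both you and the paper invoke.

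The genuine gap is in your recovery of the tangential metric. The expansion $r_{(y,s)}(y+\delta y)^2=s^2+h_{jk}(s,y)\,\delta y^j\delta y^k+O(|\delta y|^3)$ is false in general: the Hessian of $z\mapsto d_g(x,z)^2$ at the foot point $y$ of the normal geodesic is governed by the index form (Jacobi fields along $\gamma(\cdot;y,\nu)$), not by the tangential metric at the interior endpoint. Concretely, on the unit sphere with $\Gamma$ the equator one has $g=ds^2+\cos^2(s)\,dy^2$ in boundary normal coordinates, yet
\begin{equation*}
d_g\bigl(\gamma(s;y,\nu),(0,y+\delta)\bigr)^2=s^2+s\cot(s)\,\delta^2+O(\delta^4),
\end{equation*}
and $s\cot s\neq\cos^2 s$ for $s>0$; the two coefficients agree only as $s\to 0$. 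So this step, as written, does not produce $h_{jk}(s,y)$. The standard repair is to keep exactly what your inclusion tests do give, namely the boundary distance functions $r_x(z)=d_g(x,z)$ for $x\in M_\Gamma^T$ and $z\in\Gamma$ expressed in the coordinates $(s,y)$, and then determine the inverse metric from the eikonal equation $g^{jk}\partial_j r_{\cdot}(z)\,\partial_k r_{\cdot}(z)=1$ satisfied by sufficiently many of the functions $x\mapsto r_x(z)$ near a given interior point (equivalently, pass through the boundary distance representation as in \cite[Ch.~3]{Katchalov2001} and \cite{Lassas2012}). With that substitution your argument is the paper's.
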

\begin{proof}
By combining Corollary \ref{cor_test_convergence} and Lemmas \ref{lem_density_h} and \ref{lem_domi_test}
we can determine the relation 
\begin{align}
\label{domi_incldata}
\{ (\Sigma, s, h);\ M(\Sigma, s) \subset M(\Gamma, h) \}
\end{align}
for any open  $\Sigma \subset \Gamma$, $s \in(0,T]$ and 
a function $h$ of form (\ref{def_h}).
This relation determines $\sigma_\Gamma^T$ and the Riemannian manifold $(M_\Gamma^T, g)$
by using the purely geometric method described in Sections 4.2-4.4
of \cite{Lassas2012}.
Note that the relations with $M(\Gamma, h)$ replaced by the union of two domains of influence 
are obtained by using piecewise continuous functions $h$ as in  \cite[Lem. 6]{Lassas2012}, 
and that also the two limiting arguments in the proof of \cite[Prop. 2]{Lassas2012} are needed.
\end{proof}

%We may use Lemma \ref{lem_density} and the knowledge of the weakly convergent sequence as above and the inner products in Lemma \ref{lem_Blago}
%to recover the representation of the Riemannian manifold $(M_\S, g)$
%in the boundary normal coordinates (\ref{semigeodesic_coord})
%exactly as in the scalar case, see e.g. \cite{Lassas2013}. 

\subsection{Reconstruction of the connection}
\def\B{\mathcal B}
\def\E{\mathcal E}

Our reconstruction method is based on a use 
of sequences of sources $(f_j)_{j=1}^\infty$ such that 
$\supp(W f_j)$ converges to a point. 
%Such sequences can be found with the help of the lemma below.

\begin{Lemma}
\label{lem_orthogonal}
Let $\Gamma_1, \Gamma_2 \subset \p M$ be open and $r_1, r_2 > 0$.
Suppose that for a sequence $(f_j)_{j=1}^\infty \subset C_0^\infty((T-r_1,T) \times \Gamma_1; E)$
the sequence $(W f_j)_{j=1}^\infty$
converges weakly to a function $\phi \in L^2(M;E)$, and that 
$$
\pair{W f_j, W h}_{L^2(M;E)} \to 0, \quad h \in C_0^\infty((T-r_2, T) \times \Gamma_2; E).
$$
Then $\supp(\phi) \subset M(\Gamma_1, r_1) \setminus M(\Gamma_2, r_2)^\inter$.
\end{Lemma}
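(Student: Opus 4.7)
The plan is to establish the containment by proving the two set inclusions separately: $\supp(\phi) \subset M(\Gamma_1, r_1)$ using finite speed of propagation and the supports of the $Wf_j$, and $\phi|_{M(\Gamma_2, r_2)^\inter} = 0$ using approximate controllability together with the weak convergence hypothesis.

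For the first inclusion I would invoke Theorem \ref{th_finite_speed}. Since $f_j \in C_0^\infty((T-r_1, T) \times \Gamma_1; E)$, the theorem gives $\supp(Wf_j) \subset M(\Gamma_1, r_1)$ for every $j$. Thus for any $\chi \in L^2(M;E)$ supported in the open set $M \setminus M(\Gamma_1, r_1)$ we have $\pair{Wf_j, \chi}_{L^2(M;E)} = 0$, and passing to the weak limit yields $\pair{\phi, \chi}_{L^2(M;E)} = 0$. This forces $\phi$ to vanish a.e.\ on $M \setminus M(\Gamma_1, r_1)$, so $\supp(\phi) \subset M(\Gamma_1, r_1)$.

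For the second inclusion the key observation is that, because $Wf_j \rightharpoonup \phi$ weakly in $L^2(M;E)$, for each fixed $h \in C_0^\infty((T-r_2, T) \times \Gamma_2; E)$ we have
\[
\pair{\phi, Wh}_{L^2(M;E)} = \lim_{j \to \infty} \pair{Wf_j, Wh}_{L^2(M;E)} = 0,
\]
where the second equality is precisely the hypothesis. Hence $\phi$ is $L^2$-orthogonal to the subspace $\{Wh : h \in C_0^\infty((T-r_2, T) \times \Gamma_2; E)\}$. By Lemma \ref{lem_density}, this subspace is dense in $L^2(M(\Gamma_2, r_2); E)$ (viewed as functions in $L^2(M;E)$ supported on $M(\Gamma_2, r_2)$). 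Consequently $\phi$ is orthogonal to every $L^2$ function supported in $M(\Gamma_2, r_2)$, which means $\phi = 0$ a.e.\ on $M(\Gamma_2, r_2)$ and in particular on the open set $M(\Gamma_2, r_2)^\inter$. Therefore $\supp(\phi) \cap M(\Gamma_2, r_2)^\inter = \emptyset$.

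Combining the two inclusions gives $\supp(\phi) \subset M(\Gamma_1, r_1) \setminus M(\Gamma_2, r_2)^\inter$, as claimed. There is no serious obstacle here: the only place one must be slightly careful is the interchange of limits, which is avoided by using weak convergence against the \emph{fixed} element $Wh$ before exploiting density, rather than trying to pass to the limit inside a Cauchy-type argument.
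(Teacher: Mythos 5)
Your proof is correct and is exactly the argument the paper has in mind: the authors simply write that the lemma ``follows immediately from the density of the set (\ref{set_uT})'', and your two inclusions (finite speed of propagation for $\supp(\phi)\subset M(\Gamma_1,r_1)$, then weak convergence plus Lemma \ref{lem_density} applied to $(\Gamma_2,r_2)$ for the vanishing on $M(\Gamma_2,r_2)^\inter$) are the intended expansion of that one-line proof.
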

\begin{proof}
The lemma follows immediately from the density of the set (\ref{set_uT}).
\end{proof}

%Let $\Gamma \subset \p M$ be open. Let $x = (s, y) \in M_\Gamma$, $\epsilon > 0$, and let $Y \subset \Gamma$ be a neighborhood of $y$.
%Then
%\footnote{
%This geometric idea does not work in the disjoint data case, i.e. the case when the two components of the Cauchy data are restricted on different parts of the boundary. The integration by parts technique should work if one of the factors in the inner product is a solution with source supported on one part of the boundary and the source of the other factor is supported on the other part of the boundary.
%Lemma \ref{lem_orthogonal} works but now $S_1$ and $S_2$ are far away.
%We should still be able to recover the geometry locally (localized solutions are not needed for this), and possibly also the lower order terms if the boundary is convex (we have recently done this in the scalar case).
%} 
%\begin{align}
%\label{cap_convergences_to_pt}
%M(Y, s + \epsilon) \setminus M(\Gamma, s) \to x, \quad \text{as $Y \to y$ and $
%\epsilon \to 0$}.
%\end{align}

%Let $r > 0$ and let $Y \subset \S$ be open.
%We denote by $\dot C^\infty((T-r,T) \times Y; E)$ 
%the functions in $C^\infty([0,T] \times \p M; E)$ that vanish when 
%$t \le T-r$ or $y \notin Y$.

\begin{Lemma}
\label{lem_basis}
Let $T > 0$, $\Gamma \subset \p M$ be open,
and let $x \in \Gamma \cup M^\inter$ satisfy $d_g(x,\Gamma) < T$. 
Then there are functions $h_\ell \in C_0^\infty((0, 2T) \times \Gamma; E)$
such that $W h_\ell(x)$, $\ell=1,\dots,n$, form an orthonormal basis of the fiber $E_x$ of $E$ at $x$. 
\end{Lemma}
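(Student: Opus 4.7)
The plan is to split on whether $x$ lies in the boundary part $\Gamma$ or in the interior $M^\inter$, and in both cases reduce to the following: the evaluation map $h \mapsto W h(x)$, defined on $C_0^\infty((0,2T) \times \Gamma; E)$ with values in $E_x$, is surjective onto $E_x$. Once this is established, Gram--Schmidt applied to $n$ sources whose values at $x$ span $E_x$ produces the desired orthonormal basis $\{W h_\ell(x)\}$.

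The case $x \in \Gamma$ is almost immediate. For $h \in C_0^\infty((0,2T) \times \Gamma; E)$, extended by zero to $(0,2T) \times \p M$, the solution $u^h$ of (\ref{eq:wave}) is smooth (by the remarks in Section \ref{sec_direct}) and satisfies $u^h|_{\p M} = h$, so $W h(x) = u^h(T, x) = h(T,x)$. Choose a chart of $\Gamma$ around $x$ trivializing $E|_\Gamma$, a local orthonormal frame $e_1,\dots,e_n$ of $E$ in this chart, a spatial cutoff $\chi \in C_0^\infty(\Gamma)$ with $\chi(x)=1$, and a temporal cutoff $\psi \in C_0^\infty((0,2T))$ with $\psi(T)=1$; then $h_\ell(t,y) := \psi(t)\chi(y) e_\ell(y)$ gives $W h_\ell(x) = e_\ell(x)$, an orthonormal basis.

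For $x \in M^\inter$, I would prove surjectivity by duality. Suppose for contradiction some $v \in E_x \setminus \{0\}$ is orthogonal (with respect to $\pair{\cdot,\cdot}_{E_x}$) to the range. Then $\pair{v \delta_x, W h}_{L^2(M;E)} = 0$ for every admissible $h$, where $v\delta_x$ is the $E$-valued Dirac mass at $x$ with coefficient $v$. Fix a compact $K \subset M^\inter$ containing $x$ in its interior and $m > \dim(M)/2$, so that $v\delta_x \in \dot H^{-m}(K;E)$. The construction of Section \ref{sec_direct} then identifies $W^*(v\delta_x)$ with $\nabla_\nu w|_{(0,T)\times \p M}$, where $w$ solves the adjoint problem (\ref{adjoint_of_W}) with terminal data $\p_t w(T) = -v\delta_x$. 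The orthogonality forces both $w$ and $\nabla_\nu w$ to vanish on $(0,T) \times \Gamma$.

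Extend $w$ to $(0,2T) \times M$ by the odd reflection $w(t,y) := -w(2T - t, y)$ for $t > T$; since $w(T)=0$ and $(\p_t^2 + P + V) w$ vanishes at $t = T^-$ (because $w(T)=0$ and $\p_t^2 w(T) = -(P+V)w(T) = 0$), this extension is a distributional solution of the wave equation that still vanishes, together with its normal derivative, on $(0,2T) \times \Gamma$. The crucial step is to apply Theorem \ref{th_uniq_cont} not to $w$ itself (whose value at $t=T$ is already zero) but to $\p_t w$, which satisfies the same equation by time-translation invariance and inherits the vanishing traces on $(0,2T)\times\Gamma$ because $\p_t$ commutes with spatial traces. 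The theorem yields $\p_t w(T,y) = 0$ for $y \in M(\Gamma, T)^\inter$; but $\p_t w(T) = -v\delta_x$ and $x \in M(\Gamma, T)^\inter$ (by $d_g(x,\Gamma) < T$), so $v\delta_x = 0$ as a distribution on a neighborhood of $x$, forcing $v = 0$. The main obstacle is the distributional bookkeeping: checking that the odd reflection produces a genuine $H^s$ solution across $t = T$ to which Theorem \ref{th_uniq_cont} applies, which follows from (\ref{Wstar_continuity}) together with the vanishing of $w(T)$ and $\p_t^2 w(T)$ that makes the reflected object continuous through $t=T$ with a sufficient number of derivatives.
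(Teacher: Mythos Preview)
Your proof is correct and follows essentially the same route as the paper's: the boundary case via $Wh(x)=h(T,x)$, and the interior case by pairing against $e\delta_x$, using $W^*(e\delta_x)=\nabla_\nu v|_{(0,T)\times\partial M}$, extending $v$ by odd reflection across $t=T$, and invoking the unique continuation Theorem~\ref{th_uniq_cont}.

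The one place where you are in fact more careful than the paper is worth pointing out. The paper writes simply ``Theorem~\ref{th_uniq_cont} implies that $e=0$,'' but as stated that theorem only yields $v(T,\cdot)=0$ on $M(\Gamma,T)^\inter$, which is already built into the problem. What is needed is $\partial_t v(T,\cdot)=0$ there, and you supply this explicitly by applying the theorem to $\partial_t w$ (which solves the same equation and inherits the vanishing Cauchy data on $(0,2T)\times\Gamma$). This is the right fix; alternatively one can note that the semi-global Holmgren theorem underlying Theorem~\ref{th_uniq_cont} actually gives vanishing in a full space-time double cone, from which $\partial_t v(T,\cdot)=0$ follows as well. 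Either way, your argument and the paper's agree in substance.
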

\begin{proof}
If $x \in \Gamma$, then $Wh(x) = h(T,x)$ and the claim clearly holds in this case. Suppose now that $x \in M^\inter$. 
It is enough to show that the fiber $E_x$ is spanned by the vectors
$$
Wh(x), \quad h \in C_0^\infty((0, T) \times \Gamma; E).
$$
In order to show this it is enough to show that if $e \in E_x$ and
\begin{align}
\label{innerp0_lem_basis}
\pair{e, Wh(x)}_E = 0, \quad h \in C_0^\infty((0, T) \times \Gamma; E),
\end{align}
then $e = 0$.

We recall that the adjoint of $W$ is given by $W^* \phi =\nabla_\nu v|_{(0,T) \times \p M}$, where $v$ is the solution of (\ref{adjoint_of_W}),
and that the continuity (\ref{Wstar_continuity}) holds. 
We choose $\phi = e \delta_x$. 
The restriction 
$W^* \phi|_{(0,T) \times \Gamma} =\nabla_\nu v|_{(0,T) \times \Gamma}$ vanishes by (\ref{innerp0_lem_basis}),
and $v|_{(0,T) \times \Gamma}$ vanishes by the boundary condition in (\ref{adjoint_of_W}).
We extend $v$ on the time interval $(0,2T)$ by the odd reflection with respect to $t=T$, and denote the extension still by $v$.
The extension satisfies $(\p_t^2 + P+V)v = 0$ on $(0,2T) \times M$.
Theorem \ref{th_uniq_cont} implies that $e = 0$.
\end{proof}

\begin{Lemma}
\label{lem_smoothness_test}
Let $\Gamma \subset \p M$ be open, let $T > 0$,
and let $e : M \to E$ be a section of $E$.
Let $U \subset M^\inter \cup \Gamma$ be open in $M$
and suppose also that $U \subset M(\Gamma, T)$.
Suppose, furthermore, that 
$
x \mapsto \pair{e(x), Wh(x)}_{E}
$
is smooth on $U$ for all $h \in C_0^\infty((0, 2T) \times \Gamma; E)$.
Then $e$ is smooth on $U$.
\end{Lemma}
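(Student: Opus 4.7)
The plan is to construct, locally around each point $x_0 \in U$, a smooth frame of $E$ built from sections of the form $Wh_\ell$, expand $e$ in this frame, and exhibit the expansion coefficients as smooth functions given directly by the hypothesised pairings.

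Fix $x_0 \in U$ with either $x_0 \in \Gamma$ or $d_g(x_0, \Gamma) < T$. Lemma \ref{lem_basis} then produces $h_1, \dots, h_n \in C_0^\infty((0,2T) \times \Gamma; E)$ such that $Wh_\ell(x_0)$, $\ell = 1, \dots, n$, is an orthonormal basis of $E_{x_0}$. Standard interior regularity for the wave equation (iterating (\ref{energy_estimate_higher}) and applying Sobolev embedding) implies that each $u^{h_\ell}$ is smooth on $(0, 2T) \times (M^\inter \cup \Gamma)$, so $Wh_\ell(x) = u^{h_\ell}(T,x)$ is a smooth local section of $E$ near $x_0$. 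The Gram matrix $G_{k\ell}(x) = \pair{Wh_k(x), Wh_\ell(x)}_E$ is therefore smooth and equals $\delta_{k\ell}$ at $x_0$, hence remains invertible on some open neighbourhood $V \subset U$ of $x_0$, with $G^{-1}$ smooth on $V$. Writing $e(x) = \sum_\ell c_\ell(x) Wh_\ell(x)$ on $V$ and pairing with each $Wh_k(x)$ yields the linear system $\pair{e(x), Wh_k(x)}_E = \sum_\ell G_{k\ell}(x) c_\ell(x)$, which inverts to
\[
c_\ell(x) = \sum_k (G^{-1})_{\ell k}(x)\, \pair{e(x), Wh_k(x)}_E.
\]
By the smoothness hypothesis and smoothness of $G^{-1}$, each $c_\ell$ is smooth on $V$, and hence so is $e = \sum_\ell c_\ell Wh_\ell$.

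The only mildly delicate situation is $x_0 \in U \cap M^\inter$ with $d_g(x_0, \Gamma) = T$, where Lemma \ref{lem_basis} is not directly applicable. Such an $x_0$ must then be a local maximum of $d_g(\cdot, \Gamma)$, and in any neighbourhood of it one finds points $x_0' \in U$ with $d_g(x_0', \Gamma) < T$. I would apply the construction above at such an $x_0'$, obtaining smooth sections $Wh_\ell$ defined on a neighbourhood of $x_0$ as well; since the Gram matrix is continuous and equals the identity at $x_0'$, taking $x_0'$ sufficiently close to $x_0$ forces $x_0$ to lie in the open set where $G$ remains invertible. This is the only step requiring some care, and it is the main expected obstacle; the rest of the argument, including passing from the local expansions to smoothness on all of $U$, is routine.
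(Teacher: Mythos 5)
Your main argument is exactly the paper's (one-line) proof: choose $h_\ell$ as in Lemma \ref{lem_basis}, observe that the sections $Wh_\ell$ are smooth and form a frame near the point in question, and recover the coefficients of $e$ in that frame by inverting the smooth, locally invertible Gram matrix against the assumed smooth pairings. The edge case you single out, namely $x_0 \in U \cap M^\inter$ with $d_g(x_0,\Gamma) = T$, is not addressed in the paper's proof at all, and it is vacuous in both places the lemma is invoked (on $M_\Gamma^T$, and under $T > \max_{x\in M} d_g(x,\p M)$, one always has $d_g(x,\Gamma) < T$ strictly); note, though, that your patch for it is not rigorous as written, because the neighbourhood on which the Gram matrix built at $x_0'$ remains invertible depends on the choice of $x_0'$ and need not contain $x_0$ uniformly as $x_0' \to x_0$.
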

\begin{proof}
Let $x \in U$, and let us choose $h_\ell$, $\ell = 1, \dots, n$, as in Lemma \ref{lem_basis}.
Then the functions $W h_\ell$ form a smooth frame near $x$,
and the representation of $e$ in this frame is smooth. 
\end{proof}

We recall that $|X|$ denotes the Riemannian volume of a measurable set  $X \subset M$,
and that the set $M_\Gamma$ is defined by (\ref{semigeod_strip}).

\begin{Lemma}
\label{lem_conv_to_delta}
Let $\Gamma \subset \p M$ be open.
Let $x \in M_\Gamma$, and let $y \in \Gamma$ and $s \in [0,\sigma_\Gamma(y))$ satisfy $\gamma(s; y, \nu)$. Define $s_k = s + 1/k$,
$$
Y_k = \{\tilde y \in \Gamma;\ d_g(\tilde y, y) < 1/k \},
\quad X_k = M(Y_k, s_k) \setminus M(\Gamma, s).
$$
Suppose that a double sequence 
$\Phi = (f_{jk})_{j,k=1}^\infty$
of functions in the space
$C_0^\infty((T-s_k,T) \times Y_k; E)$ 
satisfies the following
\begin{itemize}
\item[(i)] For each $k=1,2,\dots$, the sequence $(W f_{jk})_{j=1}^\infty$ converges weakly in $L^2(M;E)$
to a function supported in $X_k$.
\item[(ii)] There is $C > 0$ such that 
$$
\norm{W f_{jk}}_{L^2(M;E)} \le C |X_k|^{-1/2} , \quad j,k=1,2,\dots.
$$
\item[(iii)] The limit 
$\lim_{k \to \infty} \lim_{j \to \infty} \pair{W f_{jk}, Wh}_{L^2(M;E)}$
exists for any function $h$ in the space $C_0^\infty((0,2T) \times \Gamma; E)$.
\end{itemize}
Then there is a vector $e(x;\Phi) \in E_x$ that depends on $x$ and $\Phi$
such that 
\begin{align}
\label{conv_to_delta}
\lim_{k \to \infty} \lim_{j \to \infty} \pair{W f_{jk}, \phi}_{L^2(M;E)}
= \pair{e(x;\Phi), \phi(x)}_{E}, \quad \phi \in C^\infty(M; E).
\end{align}
\end{Lemma}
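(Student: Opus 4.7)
The plan is to extract a weak $L^2$-limit $\psi_k$ of $(Wf_{jk})_j$ for each $k$, promote a finite collection of test sources into a smooth orthonormal frame of $E$ near $x$ via Lemma \ref{lem_basis} so that the linear form $\phi \mapsto \lim_k \pair{\psi_k, \phi}$ can be identified with an element of $E_x$, and then control the Taylor remainder obtained from testing against an arbitrary smooth $\phi$ by a uniform $L^1$ bound on $\psi_k$ derived from hypothesis (ii).

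First I would set up the weak limits and the candidate vector. By hypothesis (i), for each $k$ the weak $L^2$-limit $\psi_k$ of $(Wf_{jk})_{j=1}^\infty$ exists in $L^2(M;E)$ and is supported in $X_k$. Lower semicontinuity of the norm together with (ii) gives $\norm{\psi_k}_{L^2(M;E)} \le C|X_k|^{-1/2}$, and Cauchy--Schwarz then yields the crucial uniform bound
\[
\norm{\psi_k}_{L^1(M;E)} \le |X_k|^{1/2}\norm{\psi_k}_{L^2(M;E)} \le C.
\]
Apply Lemma \ref{lem_basis} at $x$ (which is legitimate since $s = d_g(x,\Gamma) < T$) to pick $h_1,\dots,h_n \in C_0^\infty((0,2T)\times\Gamma;E)$ with $Wh_1(x),\dots,Wh_n(x)$ an orthonormal basis of $E_x$. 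Such $Wh_\ell$ are smooth on $M^\inter \cup \Gamma$, so they form a smooth orthonormal frame of $E$ on some open neighborhood $U$ of $x$. Hypothesis (iii) combined with the weak convergence yields the numbers $a_\ell := \lim_k \pair{\psi_k, Wh_\ell}_{L^2(M;E)}$, and I define $e(x;\Phi) \in E_x$ as the unique vector representing the antilinear functional $u \mapsto \sum_\ell \overline{u^\ell}\, a_\ell$, where $u = \sum_\ell u^\ell Wh_\ell(x)$.

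The second step is to verify that $X_k$ Hausdorff-concentrates at $\{x\}$. Any accumulation point $z_\infty$ of a sequence $z_k \in X_k$ must satisfy $d_g(z_\infty, y) \le s$ (since $z_k \in M(Y_k, s_k)$ with $Y_k \to \{y\}$ and $s_k \to s$) and $d_g(z_\infty, \Gamma) \ge s$ (since $z_k \notin M(\Gamma, s)$), hence $d_g(z_\infty, y) = d_g(z_\infty, \Gamma) = s$. Because $s < \sigma_\Gamma(y)$, the definition (\ref{def_cut_distance}) of the cut distance forces $z_\infty = \gamma(s; y, \nu) = x$. In particular $\diam(X_k) \to 0$ and $X_k \subset U$ for all sufficiently large $k$.

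Finally, to establish (\ref{conv_to_delta}), let $\phi \in C^\infty(M;E)$ and pick $\chi \in C_0^\infty(U)$ with $\chi \equiv 1$ near $x$; for large $k$ we have $\chi \equiv 1$ on $\supp(\psi_k) \subset X_k$, so $\pair{\psi_k, \phi} = \pair{\psi_k, \chi \phi}$. On $U$ expand $\phi = \sum_\ell \phi^\ell Wh_\ell$ with $\phi^\ell \in C^\infty(U)$, and Taylor-expand $\phi^\ell(z) = \phi^\ell(x) + r_\ell(z)$ with $|r_\ell(z)| \le C\, d_g(z, x) \le C \diam(X_k)$ for $z \in X_k$. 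Substituting and using smoothness of the frame gives
\[
\pair{\psi_k, \phi}_{L^2(M;E)} = \sum_\ell \overline{\phi^\ell(x)}\, \pair{\psi_k, Wh_\ell}_{L^2(M;E)} + R_k,
\]
where $|R_k| \le C\diam(X_k)\,\norm{\psi_k}_{L^1}\max_\ell \norm{Wh_\ell}_{L^\infty} \to 0$. Sending $k \to \infty$ then produces (\ref{conv_to_delta}). The principal obstacle I anticipate is the tight interplay between the three small scales $\diam(X_k)$, $|X_k|$, and $\norm{\psi_k}_{L^2}$: hypothesis (ii) is sharply calibrated so that the $L^1$ mass of $\psi_k$ stays bounded precisely as the diameter shrinks, which is the only reason the linear Taylor remainder can beat the blow-up of the $L^2$ norm; this calibration, together with the cut-distance hypothesis $s < \sigma_\Gamma(y)$ which forces concentration at a single point, is what makes the pointwise limit well-defined.
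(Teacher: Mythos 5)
Your proposal is correct and follows essentially the same route as the paper: Lemma \ref{lem_basis} provides a local frame $Wh_\ell$ near $x$, the target vector is $e=\sum_\ell a_\ell Wh_\ell(x)$ with $a_\ell$ given by (iii), and the error from testing against a general $\phi$ is a first-order Taylor remainder controlled by $\diam(X_k)\cdot\|u_k\|_{L^2}\,|X_k|^{1/2}\le C\diam(X_k)$, exactly the calibration you point out. The only differences are cosmetic (you Taylor-expand the frame coefficients of $\phi$ and extract the uniform $L^1$ bound up front, while the paper writes $\phi(\tilde x)=c^\ell b_\ell(\tilde x)+(x^p-\tilde x^p)\psi_p(\tilde x)$ and estimates inline; your verification that $X_k$ shrinks to $\{x\}$ is somewhat more detailed than the paper's).
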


Note that we allow here the case $x \in \Gamma$, i.e. $s=0$.

\begin{proof}
By Lemma \ref{lem_basis} there are $h_\ell$
such that $Wh_\ell(x)$, $\ell=1,\dots,n$, 
form an orthonormal basis of $E_x$.
Let us write  $b_\ell= W h_\ell$ and denote
the weak limit of $(W f_{jk})_{j=1}^\infty$
by $u_k$.
We choose local coordinates $\tilde x$ in a neighborhood $U \subset M$ 
of $x$,
and suppose that $k$ is large enough so
that $X_k \subset U$
and that the sections $b_\ell(\tilde x)$ form a basis in $E_{\tilde x}$ for all $\tilde x \in X_k$.
Let $\phi \in C^\infty(M; E)$ and 
write $\phi(\tilde x) = c^\ell b_\ell(\tilde x) + (x^p - \tilde x^p)\psi_p(\tilde x)$,
where $c^\ell \in \C$ and $\psi_p \in C^\infty(U; E)$, $p=1,\dots, m$.
Then 
\begin{align}
\label{R_lem_conv_to_delta}
\pair{u_k, \phi}_{L^2(M;E)} = \overline{c^\ell} \pair{u_k, b_\ell}_{L^2(M;E)} + R_k,
\end{align}
where the remainder term satisfies
\begin{align*}
|R_k| &\le m \max_{p=1,\dots,m}\norm{\psi_p}_{C(U)} \diam(X_k) \int_{X_k} |u_k(\tilde x)|_E d\tilde x
\\&\le m \max_{p=1,\dots,m} \norm{\psi_p}_{C(U)} \diam(X_k)\, \norm{u_k}_{L^2(M;E)} |X_k|^{1/2}.
\end{align*}
%Here $m = \dim(M)$.
Note that $\diam(X_k) \to 0$ since $X_k \supset X_{k+1}$ and $X_k \to x$ as $k \to \infty$.
Thus (ii) implies that $R_k \to 0$.
By (iii) the limits 
$$
a^\ell = \lim_{k \to \infty} \pair{u_k, b_\ell}_{L^2(M;E)},
\quad \ell=1,\dots,n,
$$
exist. 
We set $e = a^\ell b_\ell(x)$. Then 
$$
\lim_{k \to \infty} \pair{u_k, \phi}_{L^2(M;E)}
= \overline{c^\ell} \lim_{k \to \infty} \pair{u_k, b_\ell}_{L^2(M;E)}
= \sum_{\ell = 1}^n  a^\ell \overline{c^\ell} = \pair{e,\phi(x)}_{E}.
$$
\end{proof}

\begin{Lemma}
\label{lem_existence_of_es}
Let $\Gamma \subset \p M$ be open,
let $x \in M_\Gamma$ and let $e \in E_x$.
Then there is a double sequence $\Phi=(f_{jk})_{j,k=1}^\infty$ 
that satisfies the conditions of  
Lemma \ref{lem_conv_to_delta}, and furthermore, $e(x; \Phi) = e$ where $e(x; \Phi)$ is as in (\ref{conv_to_delta}).
\end{Lemma}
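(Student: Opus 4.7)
The plan is to arrange $Wf_{jk}\to \psi_k$ strongly (hence weakly) in $L^2(M;E)$ as $j\to\infty$, where $\psi_k := |X_k|^{-1} 1_{X_k}\tilde e$ and $\tilde e$ is a smooth section of $E$ with $\tilde e(x)=e$, so that $(\psi_k)$ behaves as a vector-valued approximate identity concentrated at $x$ with weight $e$. To build $\tilde e$, I would first invoke Lemma \ref{lem_basis} to choose $h_1,\dots,h_n \in C_0^\infty((0,2T)\times\Gamma;E)$ with $Wh_1(x),\dots,Wh_n(x)$ an orthonormal basis of $E_x$, write $e = c^\ell Wh_\ell(x)$ for some $c^\ell\in\C$, and set $\tilde e := c^\ell Wh_\ell$; this section is smooth on $M$ because each $Wh_\ell$ is smooth by the regularity estimate (\ref{energy_estimate_higher}) applied after extending $h_\ell$ past $\p M$. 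Applying Lemma \ref{lem_density_h} with $L=1$, $\Gamma_1=Y_k$, $h_1=s_k$ (valid for $k$ large enough that $s_k\le T$, with $M(Y_k,s_k)\supset \supp\psi_k$) then produces $f_{jk}\in C_0^\infty((T-s_k,T)\times Y_k;E)$ with $\norm{Wf_{jk}-\psi_k}_{L^2(M;E)}<1/j$.

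Verifying (i)--(iii) of Lemma \ref{lem_conv_to_delta} is then routine. Condition (i) follows from strong-implies-weak convergence and the fact that $\psi_k$ is supported in $X_k$. For (ii), $\norm{\psi_k}_{L^2(M;E)}\le\norm{\tilde e}_{C(M;E)}|X_k|^{-1/2}$; after discarding finitely many $k$ for which $|X_k|\ge 1$ we have $1/j\le 1\le|X_k|^{-1/2}$, so the approximation error can be absorbed into a single constant $C$. For (iii), for any $h\in C_0^\infty((0,2T)\times\Gamma;E)$ the inner limit is
\[
\pair{\psi_k, Wh}_{L^2(M;E)} = \frac{1}{|X_k|}\int_{X_k}\pair{\tilde e, Wh}_E\, dx,
\]
which, since $\pair{\tilde e, Wh}_E$ is continuous on $M$ and $X_k$ collapses to $\{x\}$, tends as $k\to\infty$ to $\pair{\tilde e(x), Wh(x)}_E = \pair{e, Wh(x)}_E$. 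Specializing the conclusion (\ref{conv_to_delta}) of Lemma \ref{lem_conv_to_delta} to the smooth sections $\phi = Wh_\ell$ then gives $\pair{e(x;\Phi), Wh_\ell(x)}_E = \pair{e, Wh_\ell(x)}_E$ for every $\ell$, and since $\{Wh_\ell(x)\}$ spans $E_x$ this forces $e(x;\Phi) = e$.

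The main obstacle is the geometric claim that $|X_k|>0$ and that $\diam(X_k)\to 0$ as $k\to\infty$, without which $\psi_k$ is ill-defined and the averaging argument in (iii) collapses. This is exactly where the hypothesis $s<\sigma_\Gamma(y)$ enters: the boundary normal map $(s',y')\mapsto\gamma(s';y',\nu)$ is a diffeomorphism on a neighborhood of $x=\gamma(s;y,\nu)$, and in these coordinates $X_k$ contains the open cylinder $\{(s',y'):y'\in Y_k,\ s<s'\le s_k\}$ (for $k$ large enough that $s_k<\sigma_\Gamma(y')$ uniformly on $Y_k$) and is contained in a neighborhood of the same order. This yields simultaneously $|X_k|>0$ and $\diam(X_k)=O(1/k)$, justifying both the definition of $\psi_k$ and the convergence of the average in (iii), and completing the argument.
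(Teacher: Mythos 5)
Your argument is correct and takes essentially the same route as the paper: both approximate $u_k=|X_k|^{-1}1_{X_k}\tilde e$, for a smooth section $\tilde e$ with $\tilde e(x)=e$, by $Wf_{jk}$ using the approximate controllability lemma, then verify (i)--(iii) and identify $e(x;\Phi)=e$ by the same averaging computation over the shrinking sets $X_k$. The extra details you supply (the explicit construction of $\tilde e$ from Lemma \ref{lem_basis}, the absorption of the $1/j$ approximation error into the constant in (ii), and the positivity of $|X_k|$ via the semigeodesic coordinates) are correct refinements of steps the paper leaves implicit.
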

\begin{proof}
Let $\tilde e \in C^\infty(M;E)$ satisfy $\tilde e(x) = e$.
By Lemma \ref{lem_density} there is a double sequence 
$\Phi=(f_{jk})_{j,k=1}^\infty$ 
of functions in
$C_0^\infty((T-s_k,T) \times Y_k; E)$ such that 
$(W f_{jk})_{j=1}^\infty$ converges to the function $u_k = |X_k|^{-1} 1_{X_k} \tilde e$.
We recall that $1_{X_k}$ 
is the indicator function of the set $X_k$
and $|X_k|$ is its volume. 
Moreover, $u_k$ satisfies $\norm{u_k}_{L^2(M;E)} \le |X_k|^{-1/2}\norm{\tilde e}_{L^\infty(M;E)}$ 
and, 
for a function $\phi \in C^\infty(M;E)$,
$$
\pair{u_k, \phi}_{L^2(M;E)} =
\frac{1}{|X_k|} \int_{X_k} \pair{\tilde e(\tilde x), \phi(\tilde x)}_E d\tilde x
\to \pair{e, \phi(x)}_{E}, %\quad k \to \infty,
$$
where $\tilde x$ are local coordinates on $X_k$.
\end{proof}

\begin{Theorem}
\label{th_local_connection}
Let $T > 0$, let $\Gamma \subset \p M$ be open and suppose that the vector bundle 
$E|_\Gamma$
is trivial. Then the Riemannian manifold $(M_\Gamma^T, g)$, where $M_\Gamma^T$ is defined in (\ref{31.1}),
the Hermitian vector bundle $E|_\Gamma$ and the restricted Dirichlet-to-Neumann map $\Lambda_\Gamma^{2T}$
determine the Hermitian vector bundle $E|_{M_\Gamma^T}$, the connection $\nabla$ and the potential $V$ on $E|_{M_\Gamma^T}$.
\end{Theorem}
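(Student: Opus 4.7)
The plan is to carry out the reconstruction in three stages: first identify the fibers of $E$ together with the Hermitian inner product, then build a smooth local frame out of wave solutions, and finally read off the connection and the potential from the local expression of $P+V$ in that frame.

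First I would use the delta-type sequences from Lemmas~\ref{lem_conv_to_delta} and~\ref{lem_existence_of_es} to identify, at each $x\in M_\Gamma^T$, the fiber $E_x$ as equivalence classes of admissible double sequences $\Phi$. Two sequences $\Phi^1,\Phi^2$ produce the same vector precisely when
\[
\lim_{k\to\infty}\lim_{j\to\infty}\pair{W f^1_{jk}-W f^2_{jk},\, Wh}_{L^2(M;E)}=0\quad\text{for every } h\in C_0^\infty((0,2T)\times\Gamma;E),
\]
which is testable from $\Lambda_\Gamma^{2T}$ by Corollary~\ref{cor_test_convergence}; that this equality characterizes $e(x;\Phi^1)=e(x;\Phi^2)$ follows from Lemma~\ref{lem_basis}. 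Since $(M_\Gamma^T,g)$ is already known by Theorem~\ref{th_local_metric}, the volumes $|X_k|$ of the sets appearing in Lemma~\ref{lem_conv_to_delta} are computable, and the Hermitian inner product on $E_x$ is recovered from the normalized limit
\[
\pair{e(x;\Phi^1),\,e(x;\Phi^2)}_E=\lim_{k\to\infty}\lim_{j\to\infty}|X_k|\,\pair{W f^1_{jk},\,W f^2_{jk}}_{L^2(M;E)}
\]
applied to the strongly convergent sequences produced by Lemma~\ref{lem_existence_of_es}. The boundary fibers $E|_\Gamma$ are given, and compatibility is automatic from $Wh(y)=h(T,y)$ for $y\in\Gamma$.

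For a fixed interior point $x_0\in M_\Gamma^T$, Lemma~\ref{lem_basis} furnishes sources $h_1,\dots,h_n$ such that $\{Wh_\ell(x_0)\}$ is a basis of $E_{x_0}$; by (\ref{energy_estimate_higher}) each $Wh_\ell$ is smooth on $M^\inter$, so $\{Wh_\ell\}$ is a smooth local frame on some neighborhood $U$ of $x_0$. Gram--Schmidt in the reconstructed pointwise Hermitian product yields a smooth orthonormal frame $\{\tilde e_\ell\}$ on $U$, defining a local trivialization $E|_U\cong U\times\C^n$ in which $\nabla=d+A$ with skew-Hermitian matrix-valued $1$-form $A$, and $P+V$ is given by (\ref{eq:P}). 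To identify $A$ and $V$ I would use the commutation
\[
(P+V)\,Wh=-W(\p_t^2 h),\qquad h\in C_0^\infty((0,2T)\times\Gamma;E),
\]
obtained by applying $\p_t^2$ to (\ref{eq:wave}) and using vanishing Cauchy data. The right-hand side is known pointwise via Step~1; for the left-hand side I express $Wh=c^\ell(y)\tilde e_\ell(y)$ with $c^\ell(y)=\pair{Wh(y),\tilde e_\ell(y)}_E$ (also computable from Step~1). Comparing with (\ref{eq:P}), the scalar principal part $d^*dc^\ell$ is known from $g$ and the function $c^\ell$ on $U$, so at $x_0$ the equation splits into a first-order contribution $-2g^{ij}(x_0)(A_j(x_0))^{\ell}_\alpha\,\p_ic^\alpha(x_0)$ and a zeroth-order contribution involving $c^\alpha(x_0)$. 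Choosing sources $h$ for which the $c^\ell$ realize sufficiently many prescribed $1$-jets at $x_0$ (vanishing value, arbitrary differential), I would solve a linear system for the matrices $A_j(x_0)$; with $A$ in hand the zeroth-order piece then determines $V(x_0)$. A different choice of initial basis $\{h_\ell\}$ yields gauge-equivalent $A$ and $V$, matching the natural gauge of the problem.

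The hardest step is the existence of sources $h$ whose coordinate functions $c^\ell$ realize prescribed $1$-jets at $x_0$: the $L^2$-density of $\{Wh\}$ provided by Lemma~\ref{lem_density} does not by itself control pointwise derivatives. One way around this is to argue that $P+V$ is a symmetric second-order differential operator with smooth coefficients whose action on the $L^2$-dense family $\{Wh\}$ is already known via the commutation above, so its matrix coefficients in the frame $\{\tilde e_\ell\}$ are determined by duality pairing against smooth compactly supported test sections built out of the reconstructed bundle; these coefficients can then be read off at $x_0$ and compared with (\ref{eq:P}) to extract $A$ and $V$.
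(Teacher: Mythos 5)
Your overall architecture is the paper's: localized double sequences to access fibers pointwise, a frame of wave solutions, the observation that $(P+V)Wh=-W(\p_t^2 h)$, and a duality/density argument to read off the coefficients of $P+V$ against test sections with prescribed $1$-jets. Your "fallback" in the last paragraph is in fact exactly how the paper extracts $A$ and $V$, so that part is sound. There are, however, two concrete gaps.

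First, the recovery of the pointwise Hermitian product via $\lim_k\lim_j |X_k|\,\pair{Wf^1_{jk},Wf^2_{jk}}_{L^2(M;E)}$ is only valid for the very particular sequences of Lemma \ref{lem_existence_of_es}, whose weak limits are $|X_k|^{-1}1_{X_k}\tilde e$, and membership in that class is not a condition you can verify from $\Lambda_\Gamma^{2T}$. For example, sequences whose limits are $2|X_k|^{-1}1_{X_k'}\tilde e$ with $X_k'\subset X_k$ of half the volume still satisfy conditions (i)--(iii) of Lemma \ref{lem_conv_to_delta} and define the same vector $e(x;\Phi)$, yet your formula returns twice the correct value of $|e|_E^2$. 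Since your Gram--Schmidt step and the coefficients $c^\ell(y)=\pair{Wh(y),\tilde e_\ell(y)}_E$ both rest on this pointwise product, the flaw propagates. The paper avoids the problem by never taking products of two singular sequences: it first builds smooth sections $e_\kappa$ out of the delta sequences (via the computable pairings $x\mapsto\pair{e(x;\Phi^x),Wh(x)}_E$ and Lemma \ref{lem_smoothness_test}) and then pairs $\chi e_\kappa$, as a smooth test section, against a single delta sequence, which Lemma \ref{lem_conv_to_delta} evaluates as $\pair{e_\kappa(x),e_\ell(x)}_E$; every condition imposed on the choices is then checkable from the data.

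Second, the theorem asks for the Hermitian bundle, connection and potential on all of $E|_{M_\Gamma^T}$, anchored to the \emph{given} bundle $E|_\Gamma$ — not for a collection of local gauge-equivalence classes near each interior point. Your construction produces a trivialization only on a neighborhood $U$ of each $x_0$, and your closing remark that a different choice of $\{h_\ell\}$ gives gauge-equivalent data does not assemble these into one object, nor does it pin the trivialization down on $\Gamma$ (which is essential for the gluing in Section 4 and for the conclusion $\Phi|_{E_1|_{\S_1}}=\phi$ in Theorem \ref{thm:maint}). The paper handles this by noting that $E|_{M_\Gamma^T}$ is trivial (contractibility along the normal geodesics), constructing a single global orthonormal frame $(e_\ell)$ on $M_\Gamma^T$ that coincides with a chosen frame of $E|_\Gamma$ on $\Gamma$, and reconstructing $A$ and $V$ in that one frame. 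Your argument needs this globalization step to prove the statement as written.
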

\begin{proof}
We choose for each $x \in M_\Gamma^T$ a double sequence $\Phi^x=(f^x_{jk})_{j,k=1}^\infty$
satisfying conditions (i)--(iii) of Lemma \ref{lem_conv_to_delta}. Observe that,
by combining Corollary \ref{cor_test_convergence} and Lemma \ref{lem_orthogonal},
we can determine if condition (i) of Lemma \ref{lem_conv_to_delta} is valid, while
 conditions (ii) and (iii) can be verified by using 
Lemma \ref{lem_Blago} alone.
%Thus we can choose a sequence $\Phi = (f_{jk})_{j,k=1}^\infty$ 
%satisfying the assumptions of Lemma \ref{lem_conv_to_delta}.
We use Lemma \ref{lem_Blago} once again to compute the 
inner products
$\pair{e(x; \Phi^x), Wh(x)}_E$ for $h \in C_0^\infty((0,2T) \times \Gamma; E)$.
Next we will impose some further conditions on the choice of the double sequences $\Phi^x$.

First, we choose the double sequences $\Phi^x, \, x \in M_\Gamma^T$
so that the functions 
\begin{align}
\label{def_fun_e}
x \mapsto \pair{e(x; \Phi^x), Wh(x)}_{E}, \quad h \in C_0^\infty((0,2T) \times \Gamma; E),
\end{align}
are smooth in $M_\Gamma^T$.
Then Lemma \ref{lem_smoothness_test}
implies that $e(x) = e(x; \Phi^x)$ is a smooth section of the vector bundle $E|_{M_\Gamma^T}$.

Second, we pick an orthonormal frame $\B = (b_\ell)_{\ell=1}^n$
of $E|_{\Gamma}$ and choose double sequences 
$\Phi_\ell^x =(f^x_{jk, \ell})_{j,k=1}^\infty, \, \ell=1, \dots,n,$ so that
the corresponding smooth sections $e_\ell(x) = e(x;\Phi_\ell^x)$ satisfy,
$$
\pair{e_\ell(x), Wh(x)}_{E} = \pair{b_\ell(x), h(T,x)}_{E}, \quad x \in \Gamma,\ 
h \in C_0^\infty((0,2T) \times \Gamma; E).
$$
This condition implies that $e_\ell = b_\ell$ on $\Gamma$.

Our next goal is to choose $\Phi^x_\ell$ so that the
corresponding sections $e_\ell$  form an orthonormal frame also on the set
$M_0 = M_\Gamma^T \cap M^\inter$.
To this end, we observe  that the  vector bundle $E|_{M_\Gamma^T}$ is trivial. This follows from
\cite[Th. 4.2.4]{Hirsch1994}, since the identity map on $M_\Gamma^T$ is smoothly homotopic with
the map $(s,y) \mapsto (0,y)$
in  coordinates (\ref{semigeodesic_coord}).

Let $x \in M_0$, and choose 
a cut off function $\chi \in C_0^\infty(M_0)$
such that $\chi(x) = 1$.
As the functions (\ref{def_fun_e}) and 
the geometry $(M_\Gamma^T, g)$ are known, we can compute
the limits
\begin{align}
\label{towards_ek_el}
%\int_{M_0} \chi(x) 
%\pair{e_\kappa(x), W f_{jk, \ell}^x(x)}_{E} dx = 
\lim_{k \to \infty} \lim_{j \to \infty}\pair{\chi e_\kappa, W f_{jk, \ell}^x}_{L^2(M;E)}
= \pair{e_\kappa(x), e_\ell(x)}_E,
\quad \kappa, \ell = 1,\dots,n,
\end{align}
where the equality follows from Lemma \ref{lem_conv_to_delta}. Hence we can choose the double sequences $\Phi^x_\ell$ so that $\E = (e_\ell)_{\ell=1}^n$ forms an orthonormal frame on $M_0$.
Note that Lemma \ref{lem_existence_of_es} implies that 
for any frame on $M_0$ there are double sequences $\Phi^x_\ell$, $\ell = 1,\dots,n$, $x \in M_0$,
such that the corresponding functions $e_\ell$ coincide with the frame.

Now 
$(x,a) \mapsto a^\ell e_\ell(x)$,
where $a = (a^\ell)_{\ell=1}^n \in \C^n$ and $x \in M_\Gamma^T$,
is a trivialization of $E|_{M_\Gamma^T}$,
and the Hermitian inner product is given by 
$$
\pair{a^\ell e_\ell(x), c^\kappa e_\kappa(x)}_E = \sum_{\ell = 1}^n
a^\ell \overline{c^\ell}, \quad a,c \in \C^n,\ x \in M_\Gamma^T,
$$
on this trivialization.

Let us write $u^h = u$ for the solution of (\ref{eq:wave}) with $f=h$.
The functions (\ref{def_fun_e}) determine the representation of 
\begin{align}
\label{Wh_repr_M_Gamma}
Wh(x) = u^h(t,x), \quad t=T,\ x \in M_0,\ h \in C_0^\infty((0,2T) \times \Gamma; E),
\end{align}
in the frame $\E$. 
To avoid cumbersome notation, we will not make explicit distinction  
between the functions (\ref{Wh_repr_M_Gamma})
and their representation until Section \ref{sec_gluing}.

Observe that the wave equation (\ref{eq:wave}) is translation invariant in time
in the sense that $u^h(t-s,\cdot) = u^{\tilde h}(t,\cdot)$
where $\tilde h(t,\cdot) = h(t-s,\cdot)$ and $s \ge 0$.
Thus the functions (\ref{Wh_repr_M_Gamma}) are determined also for $t \in (0,T)$.
We differentiate twice in time and obtain the functions
\begin{align*}
(P+V) u^h(t,x), \quad t \in (0,T),\ x \in M_0,\ h \in C_0^\infty((0,2T) \times \Gamma; E).
\end{align*}
%We choose open and nonempty set $\Gamma' \subset \Gamma$ and $r > 0$
%such that $M(\Gamma', r) \subset M_\Gamma$.
Let $\phi \in C_0^\infty(M_0; E)$. We can compute the inner products
$$
\pair{(P+V) u^h(T), \phi}_{L^2(M;E)}
= \pair{Wh, (P+V) \phi}_{L^2(M;E)},
$$
for $h \in C_0^\infty((0, 2T) \times \Gamma; E)$.
As the functions (\ref{Wh_repr_M_Gamma}) are known and dense in $L^2(M_0;E)$, we can determine $(P+V)\phi$
on $M_0$. 

Let $x \in M_0$, $\ell = 1, \dots, n$ and $k=1,\dots, m$.
%where $m = \dim(M)$.
We choose $\phi=\phi^k_\ell$ such that $\phi(x) = 0$
and $\p_j \phi(x) = \delta_{j}^k e_\ell$ for $j=1,\dots,m$. % in the the semigeodesic coordinates (\ref{semigeodesic_coord}).
As the metric tensor is known near $x$, we can compute $d^* d \phi$ at $x$. 
Thus we can recover the first order term in $(P+V) \phi$ at $x$.
By (\ref{eq:P}), this is
$$
-2(A, d\phi)(x) = -2 g^{ik}(x) A_i e_\ell(x),
$$ 
and therefore $A$ can be determined. 
Finally, $A$ and $g$ determine $P$, and we can determine $V$ by 
$V = P + V - P$.
\end{proof}

\subsection{Reconstruction of $\nabla$ when $(M,g)$ is known and $E$ is trivial}

We will show next that Corollary \ref{cor:test}
follows from the above local reconstruction step, 
that is, from the proof of Theorem \ref{th_local_connection}.

\def\E{\mathcal E}
\begin{Corollary}
Suppose that $(M,g)$ is known, $E$ is the trivial bundle $M \times \C^n$,
and that $T > \max_{x \in M} d_g(x, \p M)$.
Let $d_A$ be a Hermitian connection on $E$.
Then the Dirichlet-to-Neumann map $\Lambda_{\p M; A}^{2T}$
determines the orbit
$$
\mathcal O(A) = \{U^{-1} A U + U^{-1} d U;\ U : M \to U(n),\ U|_{\p M} = Id \}.
$$
\end{Corollary}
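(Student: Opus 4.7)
The plan is to apply Theorem \ref{th_local_connection} with $\Gamma = \p M$ to obtain the gauge class of $d_A$ on the open dense set $M_{\p M}$, and then to propagate the resulting gauge transformation across the cut locus by integrating an ODE along normal geodesics. It suffices to show that any Hermitian connection $d_B$ on $M \times \C^n$ with $\Lambda_{\p M;A}^{2T} = \Lambda_{\p M;B}^{2T}$ satisfies $B \in \mathcal O(A)$, since the converse implication is automatic.

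First I would observe that the hypothesis $T > \max_{x \in M} d_g(x,\p M)$ forces $\sigma_{\p M}^T(y) = \sigma_{\p M}(y)$ for every $y \in \p M$, whence $M_{\p M}^T = M_{\p M}$. The complement of this set in $M$ is the cut locus of $\p M$, a closed set of measure zero. Applying Theorem \ref{th_local_connection} to both connections, its proof constructs for each of $d_A$ and $d_B$ a smooth orthonormal frame $\E^A, \E^B$ on $M_{\p M}$ extending the canonical frame on $\p M$, in which the respective connection 1-form is determined by the restricted Dirichlet-to-Neumann map alone. Equality of the two DN maps therefore forces these two 1-forms to coincide. I would then define $U : M_{\p M} \to U(n)$ by letting $U(x)$ be the unitary matrix sending $\E^A(x)$ to $\E^B(x)$ in the canonical trivialization; by construction $U$ is smooth, $U|_{\p M} = \Id$, and $B = U^{-1} dU + U^{-1} A U$ throughout $M_{\p M}$.

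To extend $U$ globally, I would rewrite this gauge identity as $d_A U = UB$, or equivalently $dU = AU - UB$. Along any smooth curve $\gamma$ starting at $\p M$ this becomes the linear ODE $\dot U + A(\dot\gamma)U - U B(\dot\gamma) = 0$ with initial condition $U(0) = \Id$; since $A$ and $B$ are smooth on all of $M$, the ODE has a unique smooth solution along the whole curve. Setting $\tilde U(\gamma(s;y,\nu))$ equal to this ODE solution along each normal geodesic emanating from $\p M$ produces a candidate extension $\tilde U : M \to U(n)$ that coincides with $U$ on $M_{\p M}$ by uniqueness.

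The main obstacle I anticipate is showing that $\tilde U$ is well-defined and smooth across the cut locus. At a cut point $x_0$ reached by several minimizing normal geodesics $\gamma(\cdot;y_i,\nu)$, the candidate value along the $i$-th geodesic is the limit, as $s \uparrow d_g(x_0,\p M)$, of $U(\gamma(s;y_i,\nu))$ evaluated at interior points of $M_{\p M}$; since $U$ is a single-valued smooth function on the open dense set $M_{\p M}$ and each ODE solution depends continuously on its initial data, a density argument that exploits the measure-zero character of the cut locus forces the various candidate values to agree. Smoothness of $\tilde U$ on all of $M$ then follows from the smoothness of $A$, $B$ and of the ODE flow. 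Once $\tilde U : M \to U(n)$ is constructed, the identity $B = \tilde U^{-1} d\tilde U + \tilde U^{-1} A \tilde U$ extends from $M_{\p M}$ to $M$ by continuity, so that $B \in \mathcal O(A)$, completing the proof.
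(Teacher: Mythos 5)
Your first two steps coincide with the paper's: with $\Gamma = \p M$ and $T > \max_{x}d_g(x,\p M)$ one has $M_{\p M}^T = M_{\p M} = M\setminus N$ with $N$ the cut locus, and the local reconstruction yields a gauge $U$ on $M_{\p M}$ with $U|_{\p M}=\Id$ and $B = U^{-1}dU + U^{-1}AU$ there. The gap is in the extension across $N$. Integrating $\dot U + A(\dot\gamma)U - UB(\dot\gamma)=0$ along each minimizing normal geodesic $\gamma_i$ from $y_i$ to a cut point $x_0$ does produce a candidate value, namely $\lim_{s\uparrow\sigma(y_i)}U(\gamma_i(s)) = P^A_{\gamma_i}(P^B_{\gamma_i})^{-1}$ in terms of the $A$- and $B$-parallel transports, but the asserted ``density argument'' does not show these candidates agree. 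Two points $\gamma_1(s)$, $\gamma_2(s)$ converging to $x_0$ from different branches can only be joined inside $M_{\p M}$ by paths returning to $\p M$, so their intrinsic distance in $M_{\p M}$ stays bounded below; hence the bound $|dU|\le |A|+|B|$ on $M_{\p M}$ gives no control on $|U(\gamma_1(s))-U(\gamma_2(s))|$, and indeed a function with bounded differential on the complement of a hypersurface piece of $N$ can jump across it. Agreement of the two candidates is equivalent to $(P^A_{\gamma_2})^{-1}P^A_{\gamma_1} = (P^B_{\gamma_2})^{-1}P^B_{\gamma_1}$, i.e.\ to equality of the $A$- and $B$-parallel transports along the broken path $\gamma_2^{-1}\cdot\gamma_1$ through the cut point. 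You know the corresponding equality only for loops contained in $M_{\p M}$, and a loop through $x_0$ cannot be deformed into $M_{\p M}$ while sweeping small area, so no curvature or continuity argument closes this; the only proof of the claim I can see presupposes the existence of a globally smooth gauge, which is what is being proved. (Smoothness of $\tilde U$ is a secondary issue: the normal exponential map is not injective beyond the cut time, so smoothness in $(s,y)$ does not transfer to smoothness in $x$.)

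The paper avoids the problem by never producing a gauge that could degenerate at $N$: among the many data-consistent choices of the double sequences $\Phi^x_\ell$, it imposes the additional, $\Lambda^{2T}_{\p M}$-verifiable requirement that the computable functions $x\mapsto \pair{e_\ell(x), Wh(x)}_E$ be smooth on all of $M=\overline{M_{\p M}}$; such a choice exists because the constant frame is realizable by Lemma \ref{lem_existence_of_es}. Lemma \ref{lem_smoothness_test} then shows that the reconstructed frame $\E$ is smooth on the whole of $M$, so the transition function $U$ to the constant frame is smooth across the cut locus for free, and $\widetilde A = U^{-1}AU + U^{-1}dU$ is a globally defined representative of $\mathcal O(A)$. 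If you wish to keep your two-connection formulation, you should build in this selection step rather than attempt an after-the-fact ODE extension.
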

\begin{proof}
Let $b_1, \dots, b_n$ be the standard basis of $\C^n$
and let $\B$ be the corresponding constant frame of $E$.
%Given the operator 
%$\Lambda_{\p M; A}^{2T}$,
Let $\E$ be the orthonormal frame of 
$E|_{M_{\p M}}$ chosen in the proof of Theorem \ref{th_local_connection}. 
We recall that $\E$ can be enforced to satisfy $\E = \B$
on $\p M$.

We have $M_{\p M} = M \setminus N$ where the cut locus $N$ is of measure zero, see e.g. \cite{Chavel2006}.
In particular, $M_{\p M}$ is dense in $M$.
We know the representation 
of the functions $Wh$, $h \in C_0^\infty((0,2T) \times \p M; E)$,
in the frame $\E$, see (\ref{Wh_repr_M_Gamma}) above.
Let us impose the further condition on the 
choice of $\Phi^x_\ell$ in the proof of Theorem \ref{th_local_connection}
 that the representation of $Wh(x)$ in the frame $\E$ is smooth in $M = \overline{M_{\p M}}$
for all $h \in C_0^\infty((0,2T) \times \p M; E)$.
Then Lemma \ref{lem_smoothness_test} implies that $\E$
gives a smooth frame for the whole vector bundle $E$.

There is a smooth transition function $U : M \to U(n)$ 
between the two frames $\E$
and $\B$,
and $U = Id$ on $\p M$. 
Moreover, we can reconstruct the representation of $d_A$ in the frame $\E$.
Let us denote the representation by $d_{\widetilde A}$. Then 
$$
\widetilde A = U^{-1} A U + U^{-1} dU,
$$
and hence we can determine the orbit $\mathcal O(\widetilde A) = \mathcal O(A)$.
\end{proof}

Suppose now that $d_A$ and $d_B$ are two Hermitian connections on $E$,
and that the assumptions of Corollary \ref{cor:test} are satisfied. 
Then the above corollary implies that $\mathcal O(A) = \mathcal O(B)$,
and we have shown Corollary \ref{cor:test}.

\section{Global reconstruction}

In this section we show how to recover globally the coefficient of $P+V$, up to the gauge invariances,
by iterating the local reconstruction step 
and by continuing the data $\Lambda_\S^{2T}$
inside the region that we have already reconstructed.
We will begin by giving a brief outline of the iterative scheme.
The data $\Lambda_\S^{2T}$ can be viewed as a model of measurements with sources and receivers on $\Gamma$.
To initialize the iteration, we choose a small ball $B_0$ in the region
where the coefficients of $P+V$ are already known from the local reconstruction step in the previous section.
Then we use unique continuation to recover data modelling 
measurements with sources on $\Gamma$ and receivers on $B_0$,
and also with both sources and receivers on $B_0$.
Then we repeat the local reconstruction step for the data with sources and receivers on $B_0$,
and recover the coefficients of $P+V$ on a larger ball $B$
containing $B_0$.
Using unique continuation again, we recover the data with sources and receivers on a small ball $B_1$ in $B$, and also the data with sources on $\Gamma$ and the receivers on $B_1$.
Iterating this alternating procedure, we can cover $M$ with small patches where the 
coefficients of $P+V$ are known. 
The data with sources on $\Gamma$ and the receivers on $B_0$, $B_1$, \dots, is then used to glue the patches together. 

\subsection{Continuation of the data}
\label{sec_redatuming}
\def\BB{\mathbb B}

For $T > 0$ and open sets $B \subset M$ and $\Gamma \subset \p M$, we define the map
$$
L_{\Gamma, B}^{T} f = u|_{(0, T) \times B}, \quad f \in C_0^\infty((0,T) \times \Gamma; E),
$$ 
where $u$ is the solution of (\ref{eq:wave}).
Moreover, for open $B \subset M^\inter$, we define the map
$$
L_B^{T} F = u|_{(0,T) \times B}, \quad F \in C_0^\infty((0,T) \times B; E),
$$ 
where $u$ is the solution of 
\begin{align}
\label{eq_wave_inter}
&(\partial_{t}^{2}+P+V)u(t,x) = F, \quad (0,\infty)\times M,\\
&u|_{(0,\infty)\times\partial M}=0,
\notag\\
&u|_{t=0}=\partial_{t}u|_{t=0}=0.
\notag
\end{align}
We write $B(x,\e) = \{y \in M;\ d_g(y,x) < \epsilon\}$
for $x \in M$ and $\e > 0$.

\begin{figure}[t]
\def\svgwidth{6cm}
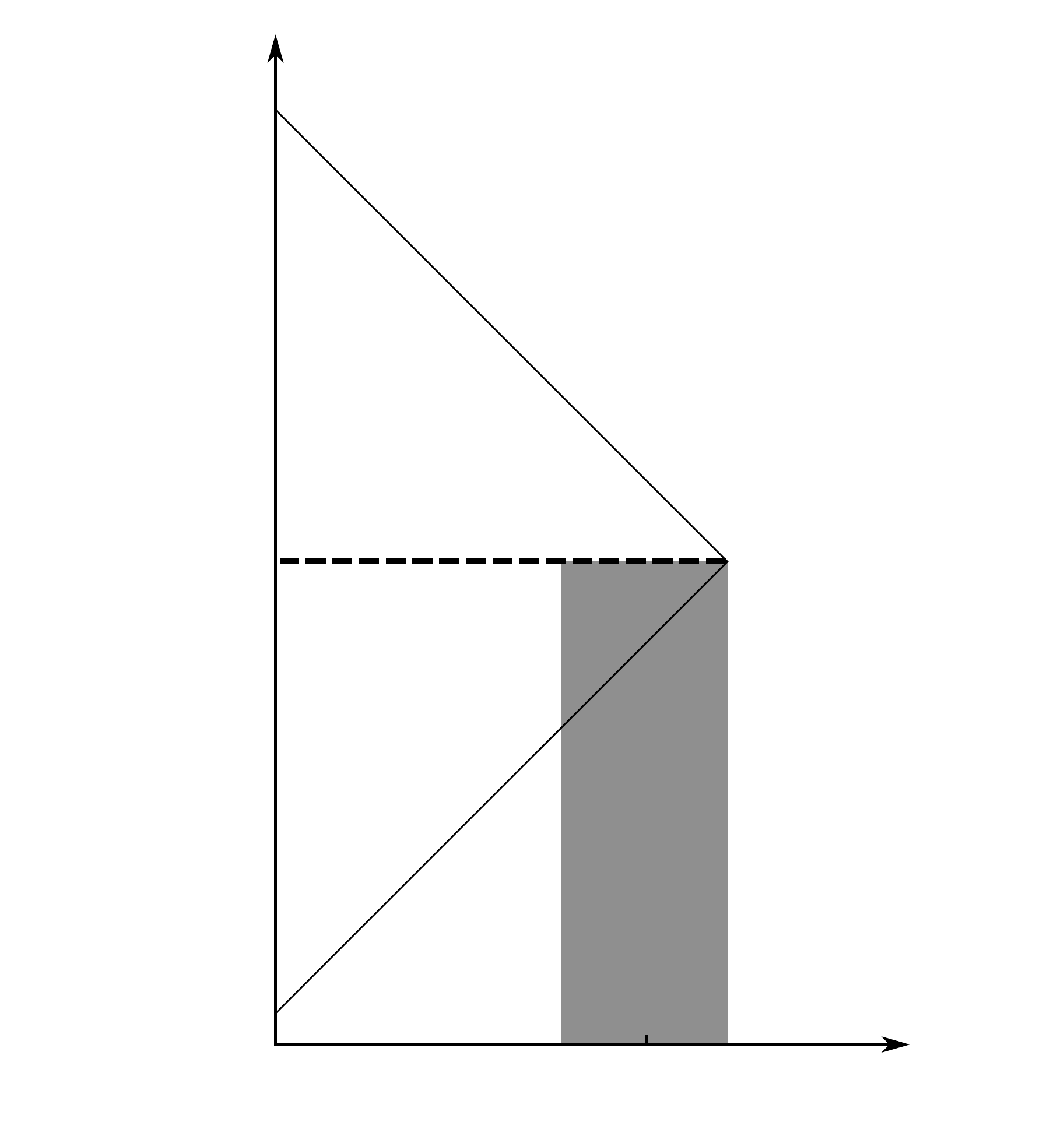
\caption{
A schematic of the unique continuation argument
in the proof of Lemma \ref{lem_local_move_boundary}.
The origin represents the set $\Gamma$,
and the gray area is the cylinder $(0,2T-t_0) \times B$.
%where $t_0 = s+\e$, $s = d_g(x,\Gamma)$ and $B = B(x,\e)$.
In order to recover $u$ on $\{2T-t_0\} \times B$, data
$(u, \nabla_\nu u)$ is needed on the  
cylinder $I \times \Gamma$ where $I = (2T-2t_0,2T)$.
We may translate the interval $I$ to cover the whole gray cylinder. 
}
\label{fig_uniq_cont}
\end{figure}

\begin{Lemma}
\label{lem_local_move_boundary}
Let $T > 0$,  $\Gamma \subset \p M$ be open
and let 
$x \in M_\Gamma^{T}$.
Define $s = d_g(x,\Gamma)$, let $\e \in (0,T-s)$
and define
$$
B = B(x,\epsilon), \quad t_0 = s + \e.
$$
Then $\Lambda_\Gamma^{2T}$ and the structure $(g,E,\nabla,V)$ on $M_\Gamma^{T}$
determine the map $L_{\Gamma, B \cap M_\Gamma^{T}}^{2T-t_0}$.
Furthermore, if $B \subset M_\Gamma^{T} \cap M^\inter$ then they
determine also the map $L_{B}^{2(T-t_0)}$.
\end{Lemma}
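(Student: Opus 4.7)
The plan is to regard the lemma as a uniqueness assertion and invoke the sharp unique continuation of Theorem \ref{th_uniq_cont}, directly for the first part and after a duality reduction for the second. Concretely I fix two structures $(M_i, g_i, E_i, \nabla_i, V_i)$, $i=1,2$, sharing the same $\Lambda_\Gamma^{2T}$ and, after the identification of the two copies of $M_\Gamma^T$ given by the agreement hypothesis, I will show that $u_1^f = u_2^f$ on $(0, 2T - t_0) \times (B \cap M_\Gamma^T)$ and $u_1^F = u_2^F$ on $(0, 2(T - t_0)) \times B$. The geometric fact used throughout is $B \subset M(\Gamma, t_0)^\inter$: for $y \in B(x, \e)$, the triangle inequality gives $d_g(y, \Gamma) \le d_g(y, x) + s < \e + s = t_0$.

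For part one, fix $f \in C_0^\infty((0, 2T - t_0) \times \Gamma; E)$ and form $v = u_1^f - u_2^f$ on the identified $M_\Gamma^T$. On $(0,2T) \times \Gamma$ both Dirichlet traces equal $f$ and both Neumann traces equal $\Lambda_\Gamma^{2T} f$, so the Cauchy data of $v$ vanishes there. Because the coefficients of $\p_t^2 + P + V$ agree on $M_\Gamma^T$, the function $v$ solves the homogeneous wave equation with zero initial conditions, and I extend it by zero to $t \le 0$ (which is still a weak solution thanks to the matching zero initial data). Applying Theorem \ref{th_uniq_cont} on the translated interval $(t^* - t_0, t^* + t_0) \subset (-t_0, 2T)$ for each target $(t^*, y) \in (0, 2T - t_0) \times (B \cap M_\Gamma^T \cap M^\inter)$ then yields $v(t^*, y) = 0$, since $y \in M(\Gamma, t_0)^\inter$; the case $y \in \Gamma$ is trivial because $u_i^f|_\Gamma = f$ is known outright.

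For part two, the extra ingredient is a duality step that determines $\nabla_\nu u^F|_\Gamma$. An integration by parts in the spirit of Lemma \ref{lem_Blago}, using the time-reversed solution $\tilde u_G$ of the wave equation with interior source $G \in C_0^\infty((0, 2T - t_0) \times B; E)$, vanishing Dirichlet data and vanishing final data at $t = 2T-t_0$, identifies the formal adjoint as $(L_{\Gamma, B \cap M_\Gamma^T}^{2T - t_0})^* G = \nabla_\nu \tilde u_G|_\Gamma$. Given $F \in C_0^\infty((0, 2(T - t_0)) \times B; E)$, setting $G(t, y) = F(2T - t_0 - t, y)$ (whose support lies in $(t_0, 2T - t_0)$) so that $u^F(t, \cdot) = \tilde u_G(2T - t_0 - t, \cdot)$, I recover $\nabla_\nu u^F|_\Gamma$ on the full interval $(0, 2T - t_0)$ from part one. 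The difference $v = u_1^F - u_2^F$ then has vanishing source, zero initial conditions and vanishing Cauchy data on $(0, 2T - t_0) \times \Gamma$, and the same Tataru argument yields $v(t^*, y) = 0$ for $(t^*, y) \in (0, 2(T - t_0)) \times B$; the requirement $(t^* - t_0, t^* + t_0) \subset (-\infty, 2T - t_0)$ is precisely what produces the sharp time $2(T - t_0)$ in the statement.

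The main technical obstacle is that $v$ is a priori defined only on $M_\Gamma^T$, while Theorem \ref{th_uniq_cont} is phrased for solutions on the full manifold $M$. I resolve this by appealing to the local character of the underlying Carleman estimate \cite{Eller2002}: since $B$ lies relatively compactly inside $M(\Gamma, t_0)^\inter$, any backward cone from a target $(t^*, y)$ used in the argument can be confined to a neighborhood contained in $M_\Gamma^T$, and the local form of Tataru's theorem applies without extending $v$ across $\p M_\Gamma^T \setminus \Gamma$. A secondary delicacy is ensuring that the duality identification in part two is valid on the full interval $(0, 2T - t_0)$; this follows because the adjoint $(L_{\Gamma, B \cap M_\Gamma^T}^{2T-t_0})^*$ is determined by the data from part one and time reversal preserves the Neumann trace, which is what converts the asymmetric interval for $F$ into symmetric Cauchy data for $v$.
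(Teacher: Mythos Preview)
Your approach is essentially the same as the paper's: both parts use translated applications of Theorem~\ref{th_uniq_cont} after extending by zero to negative times, and for the second part both you and the paper pass from $L_{\Gamma,B}^{2T-t_0}$ to the map $F\mapsto\nabla_\nu u^F|_\Gamma$ by transposition followed by time reversal, then apply unique continuation once more. The time accounting you give for the intervals $(t^*-t_0,t^*+t_0)$ matches the paper's Figure~\ref{fig_uniq_cont}.

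The one place where you diverge from the paper is in your resolution of the technical obstacle that $v$ lives only on $M_\Gamma^T$. Your argument that ``any backward cone from a target $(t^*,y)$ can be confined to a neighborhood contained in $M_\Gamma^T$'' is not correct as stated: this is the geometry of finite speed of propagation, not of unique continuation. The domain needed for Tataru--Eller--Toundykov unique continuation to reach $(t^*,y)$ is not a backward light cone but the full domain of influence $M(\Gamma,t_0)$, which need not sit inside $M_\Gamma^T$. The paper's fix is cleaner and avoids reopening the Carleman estimate: it simply applies Theorem~\ref{th_uniq_cont} with $M$ replaced by the manifold-with-boundary $M_\Gamma^T$ itself, after noting that the intrinsic distance $\tilde d_g(\cdot,\Gamma)$ on $M_\Gamma^T$ coincides with $d_g(\cdot,\Gamma)$ (by the very definition of the semigeodesic strip), so that $B\cap M_\Gamma^T$ lies in the intrinsic domain of influence of radius $t_0$. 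With that one-line adjustment your argument is complete and agrees with the paper's.
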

\begin{proof}
Let $f \in C_0^\infty((0,2T) \times \Gamma; E)$.
We will next use unique continuation 
to determine
$L_{\Gamma, B\cap M_\Gamma^{T}}^{2T-t_0} f$
given $P+V$ on $M_\Gamma^{T}$ and $\Lambda_{\Gamma}^{2T} f$.
Let us first extend the solution $u$ of (\ref{eq:wave}) by $0$ to $(-\infty, 0) \times M$.
We denote the distance function of 
$(M_\Gamma^{T}, g)$ by $\tilde d_g$ 
and observe that
$$
\tilde d_g(x, \Gamma) = d_g(x,\Gamma),
\quad x \in M_\Gamma^{T},
$$
by the definition of $M_\Gamma$, see (\ref{semigeod_strip}).
Let $\tilde u$ be a solution 
of 
\begin{align}
\label{tildeu_uniqcont_eq}
(\partial_{t}^{2}+P+V) \tilde u = 0, \quad (-\infty,2T)\times M_\Gamma^T,
\end{align}
satisfying 
the boundary conditions
\begin{align}
\label{tildeu_uniqcont_bc}
\tilde u = f \quad \text{and} \quad \nabla_\nu \tilde u = \Lambda_{\Gamma}^{2T} f
\quad
\text{on}\ (-\infty,2T) \times \Gamma.
\end{align}
Given $P+V$ on $M_\Gamma^{T}$ and $\Lambda_{\Gamma}^{2T} f$,
we can determine the set of functions 
$$
\mathbb U_f = \{\tilde u \in C^\infty((-\infty,2T)\times M_\Gamma^T);\ \text{(\ref{tildeu_uniqcont_eq}) and (\ref{tildeu_uniqcont_bc}) hold}\}.
$$
Let $\tilde u \in \mathbb U_f$, and apply Theorem \ref{th_uniq_cont}
on the function $w = \tilde u - u$
with $M$ replaced by $M_\Gamma^{T}$ and with suitable translations in the time variable, see Figure \ref{fig_uniq_cont}.
This implies that $\tilde u = u$ on
$(0,2T-t_0) \times (B\cap M_\Gamma^{T})$,
and we have shown the first claim. 

Let us now assume that $B \subset  M_\Gamma^{T} \cap M^\inter$.
We will reconstruct the map $L_B^{2(T-t_0)}$ in two steps that we outline before giving a detailed proof.
%We will first use unique continuation to obtain a map that 
%is an extension of $L_{\Gamma, B}^{2T_0,s}$.
Note that $L_{\Gamma, B}^{2T-t_0}$ can be interpreted as data
with sources on $\Gamma$ and receivers on $B$. 
We will first transpose $L_{\Gamma, B}^{2T-t_0}$
and obtain data with sources on $B$ and receivers on $\Gamma$.
Then we will use unique continuation to obtain data with both sources and receivers on $B$, that is, the map $L_B^{2(T - t_0)}$.

By taking the adjoint of $L_{\Gamma, B}^{2T-t_0}$ and conjugating it with the operator reversing the time on the interval 
$(0,2T-t_0)$, we get the map
\begin{align}
\label{data_B_to_S}
F \mapsto \nabla_\nu u : C_0^\infty((0, 2T -t_0) \times B;E)
\to C^\infty((0,2T -t_0) \times \Gamma;E),
\end{align}
where $u$ is the solution of (\ref{eq_wave_inter}). We extend $u$ by $0$ to $(-\infty, 0) \times M$, and let $\tilde u$ be a solution 
of 
$$
(\partial_{t}^{2}+P+V) \tilde u = F, \quad (0,\infty)\times M_\Gamma^T,
$$
satisfying %the boundary conditions  
$\tilde u = 0$
and $\nabla_\nu \tilde u =\nabla_\nu u$
on $(-\infty, 2T-t_0) \times \Gamma$.
Then $w=\tilde u-u$ satisfies conditions of Theorem \ref{th_uniq_cont} with $M$ again replaced by $M_\Gamma^T$,
and therefore $\tilde u=u$ 
on $(0,2T-t_0-t_0) \times B$.
This implies the second claim. 
\end{proof}

We denote by $SM$ the unit sphere bundle of $M$.
Similarly to $\sigma_\Gamma$ and $\sigma_\Gamma^T$, see (\ref{def_cut_distance}) and (\ref{31.1}),  
we define for $x \in M^\inter,\, \xi \in S_x M$ and $T > 0$,

\begin{align*} 
%\label{04.1}
\sigma_x(\xi) &= \sup \{t \in (0,\tau_x(\xi)];\ d_g(\gamma(t;x,\xi), x) = t\}, 
\\ \nonumber
\tau_x(\xi) &= \sup \{t \in (0,\infty);\ \gamma(t;x,\xi) \in M^\inter\},
\end{align*}
and $\sigma_x^T(\xi) = \min(\sigma_x(\xi), T)$.
Moreover, we define 
$$
M_x^T=\{\gamma(t;x,\xi);\, \xi \in S_x M, t \in [0, \sigma_x^T(\xi)) \}.
$$
Note that the injectivity radius $\hbox{inj}_x$ at a point $x \in M^\inter$
satisfies 
$$
\hbox{inj}_x = \min_{\xi \in S_x M} \sigma_x(\xi).
$$

\begin{Lemma}
\label{lem_local_reconstruction}
Let $T > 0$, $x \in M^\inter$, 
$\epsilon \in (0, \hbox{inj}_x)$, and set
$
B = B(x, \epsilon)$.
Then $L_B^{2T}$ and the structure $(g,E,\nabla,V)$ on $B$
determine the structure $(g,E,\nabla,V)$ on $M_x^{T+\e}$.
\end{Lemma}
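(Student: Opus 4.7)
The plan is to replay the boundary-based reconstruction of Theorems \ref{th_local_metric} and \ref{th_local_connection} with the interior ball $B$ acting as a virtual boundary, exploiting the fact that the structure on $B$ is already known. The first step is an interior analogue of the Blagovestchenskii identity (Lemma \ref{lem_Blago}). For $F,H\in C_0^\infty((0,2T)\times B;E)$, let $u^F,u^H$ denote the solutions of (\ref{eq_wave_inter}); then $w(t,s)=\pair{u^F(t),u^H(s)}_{L^2(M;E)}$ satisfies the $1+1$ wave equation
\begin{align*}
(\partial_t^2-\partial_s^2)w(t,s)=\pair{F(t),L_B^{2T}H(s)}_{L^2(B;E)}-\pair{L_B^{2T}F(t),H(s)}_{L^2(B;E)}
\end{align*}
with zero Cauchy data, using symmetry of $P+V$ and the Dirichlet boundary condition on $\p M$. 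Thus $L_B^{2T}$ determines $\pair{u^F(T),u^H(T)}_{L^2(M;E)}$, and hence the analogue of Corollary \ref{cor_test_convergence} — we can test strong and weak convergence of $u^{F_j}(T)$ in $L^2(M;E)$ knowing only $L_B^{2T}$.

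Next I would establish the interior analogues of Lemmas \ref{lem_density} and \ref{lem_density_h}, namely that $\{u^F(T):F\in C_0^\infty((T-r,T)\times B;E)\}$ is dense in $L^2(M(B,r);E)$. The duality argument is identical, provided one invokes unique continuation from the interior open set $B$ rather than from a piece of $\p M$; this follows from Theorem \ref{th_uniq_cont} applied on an enlargement of $M$, or directly from the local Tataru-type unique continuation used to derive Theorem \ref{th_uniq_cont} in \cite{Eller2002,Eller2012}, since $B$ is a nonempty open set. Given this density together with the computable inner products, the reconstruction of the metric on $M_B^T$ (defined in the obvious way from $B$ using the exponential map on its outward-pointing unit normal sphere bundle and the analogue of $\sigma_B^T$) proceeds verbatim as in Theorem \ref{th_local_metric}, via domain-of-influence comparisons and the purely geometric Sections~4.2--4.4 of \cite{Lassas2012}.

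The reconstruction of $E$, $\nabla$ and $V$ on $M_B^T$ then follows the proof of Theorem \ref{th_local_connection} line by line, with the double sequences $\Phi^y=(F^y_{jk})$ now supported in $B$. Using the known Hermitian structure and frame on $B$, the initial frame $\B$ is fixed on $B$ instead of on $\Gamma$; the extension to an orthonormal frame $\E$ on $M_B^T$ uses the same smoothness test (Lemma \ref{lem_smoothness_test}) and the same orthonormality test based on limits of $\pair{\chi e_\kappa,u^{F^x_{jk,\ell}}}_{L^2(M;E)}$. Differentiation of $u^h$ in time and pairing against $\phi\in C_0^\infty(M_B^T\cap M^\inter;E)$ recovers $(P+V)\phi$, and selecting $\phi=\phi_\ell^k$ as in the proof of Theorem \ref{th_local_connection} yields $A$, hence $V$, in the frame $\E$ on all of $M_B^T$.

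It remains to check the geometric inclusion $M_x^{T+\e}\subset M_B^T$: given $z=\gamma(t;x,\xi)$ with $t<\sigma_x^{T+\e}(\xi)$, the minimizing geodesic from $x$ to $z$ crosses $\p B$ transversally at parameter $\e$ because $\e<\hbox{inj}_x$; the tail from this crossing point to $z$ has length $t-\e<T$ and realises the distance $d_g(z,B)$, while the non-cut condition on $\xi$ from $x$ transfers to a non-cut condition from the crossing point. The main obstacle I anticipate is exactly this last geometric bookkeeping together with the careful verification that unique continuation and the density argument survive the transition from a boundary piece $\Gamma\subset\p M$ to an interior open set $B\subset M^\inter$; once that is in place, every other step is a faithful transcription of the boundary case.
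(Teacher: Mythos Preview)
Your approach is correct, but it takes a genuinely different route from the paper. The paper does \emph{not} redevelop the Boundary Control machinery for interior sources; instead it reduces to the boundary case already treated in Theorems~\ref{th_local_metric} and~\ref{th_local_connection}. Concretely, it sets $\tilde M = M\setminus B$, so that $\partial B$ becomes part of the boundary of the new manifold $\tilde M$, and shows that $L_B^{2T}$ together with the known structure on $B$ determines the restricted Dirichlet-to-Neumann map $\Lambda_{\partial B}^{2T}$ of $\tilde M$: any $f\in C_0^\infty((0,2T)\times\partial B;E)$ can be matched by extending the solution of the $\tilde M$-problem smoothly across $B$, identifying all $F\in\mathcal C$ with $L_B^{2T}F|_{\partial B}=f$, and then reading off $\nabla_\nu L_B^{2T}F|_{\partial B}$. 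Once $\Lambda_{\partial B}^{2T}$ is in hand, Theorems~\ref{th_local_metric} and~\ref{th_local_connection} apply verbatim with $M$ replaced by $\tilde M$ and $\Gamma=\partial B$, and the geometric identity $\sigma_x(\xi)=\sigma_{\partial B}(\gamma(\e;x,\xi))+\e$ gives $M_x^{T+\e}=B\cup\tilde M_{\partial B}^T$.

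Your route---an interior Blagovestchenskii identity plus interior density via unique continuation from the open set $B$---is sound, and the computation of $(\partial_t^2-\partial_s^2)w$ you wrote down is correct. The cost is that every lemma (\ref{lem_density}, \ref{lem_density_h}, \ref{lem_domi_test}, \ref{lem_basis}, \ref{lem_conv_to_delta}) and the geometric reconstruction of \cite{Lassas2012} must be re-checked for interior volume sources rather than boundary traces; in particular the ``purely geometric'' step of \cite{Lassas2012} is written for semigeodesic coordinates off a boundary hypersurface and needs a genuine, if routine, adaptation to domains of influence $M(\Sigma,s)$ with $\Sigma\subset B$ open. The paper's reduction avoids all of this by paying a one-time price (extracting $\Lambda_{\partial B}^{2T}$ from $L_B^{2T}$) and then invoking the existing theorems unchanged. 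Your approach has the virtue of being more self-contained and of not introducing the auxiliary manifold $\tilde M$, which slightly simplifies the cut-distance bookkeeping at the end.
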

\begin{proof}
We define $\tilde M = M \setminus B$
and consider the wave equation
\begin{align}
\label{weq_Mtilde}
&(\partial_{t}^{2}+P+V)\tilde u=0, \qquad\qquad\qquad\quad (0,\infty)\times \tilde M,
\\\nonumber
&\tilde u|_{(0,\infty)\times\partial B}= f,
\quad
\tilde u|_{(0,\infty)\times\partial M}=0,
\\\nonumber
&\tilde u|_{t=0}=\partial_{t}\tilde u|_{t=0}=0.\notag
\end{align}
We will show that $L_B^{2T}$ determines the
restricted Dirichlet-to-Neumann map $\Lambda_{\p B}^{2T}$
of $\tilde M$, that is, the map 
\begin{align*}
%\label{redatuming_Cauchy_data}
\Lambda_{\p B}^{2T} f = \nabla_\nu \tilde u|_{(0,2T) \times \p B}, \quad f \in C^\infty_0((0,2T) \times \p B;E),
\end{align*}
where $\tilde u$ is the solution of (\ref{weq_Mtilde}).
Let $f \in C^\infty_0((0,2T) \times \p B;E)$ and extend
the solution of (\ref{weq_Mtilde}) smoothly into $(0, \infty) \times B$ keeping the notation $\tilde u$ for the extension.
Then $\tilde u$ satisfies (\ref{eq_wave_inter}) with
$\tilde F=(\p_t^2+P+V)\tilde u$,
and $\tilde F$ belongs to
\begin{align*}
\mathcal C
= \{F \in C^\infty((0,\infty) \times M;E);\ \supp(F) \subset (0, \infty] \times \overline B\}.
\end{align*}

Observe that that $L_B^{2T}$ has a unique extension as an operator on $L^2((0,2T) \times B;E)$.
By using this extension, we can determine the set 
$$
\mathbb F_f = \{F \in \mathcal C;\ 
L_B^{2T}F|_{(0,2T) \times \p B} = f
\}.
$$
Since the solution of (\ref{weq_Mtilde})
is unique, it hods for $F \in \mathbb F_f$ that 
$$
\nabla_\nu L_B^{2T}F|_{(0,2T) \times \p B} = \nabla_\nu \tilde u|_{(0,2T) \times \p B}.
$$
We have shown that the map $L_B^{2T}$ determines the map
$\Lambda_{\p B}^{2T}$.

We denote by $\sigma_{\p B}$
the cut distance on the manifold $\tilde M$
defined analogously to (\ref{def_cut_distance})
and define $\sigma_{\p B}^T(y) = \max(\sigma_{\p B}(y), T)$, $y \in \p B$.
Note that the vector bundle $E|_{\p B}$ is trivial,
in fact, $E$ is trivial over $M_x^T$ due to its contractibility via the radial geodesics emanating from $x$.
We apply Theorems \ref{th_local_metric} and \ref{th_local_connection}
with $M = \tilde M$ and $\Gamma = \p B$.
This gives us the structure $(g, E, \nabla,V)$ on 
$$\tilde M_{\p B}^T = \{\gamma(s;y, \nu);\ y \in \p B,\ s \in [0, \sigma_{\p B}^T(y))\}.$$
Note that $\sigma_x(\xi)= \sigma_{\p B}(y)+\e$,
where $y=\gamma(\e; x, \xi)$, 
and therefore
 $M_x^{T+\e}=B \cup\tilde M_{\p B}^{T}$. 
\end{proof}

\begin{Lemma}
\label{lem_local_move}
Let $T_0, \epsilon_0 > 0$, $x_0 \in M^\inter$, 
and define $B_0 =B(x_0, \e_0)$ and $M_0 = M_{x_0}^{T_0}$.
Let $x \in M_0 \setminus B_0$ and define $s = d_g(x, x_0)$.
Let $T > 0$ and let $\e \in (0, \hbox{inj}_{x})$
satisfy 
$$
\e < d_g(x, \p M_0), \quad \e < T - s + \e_0.
$$
Define $B_1 = B(x,\epsilon)$ and $t_1 = s + \e - \e_0$.
Then $L_{B_0}^{2T}$ and the structure $(g,E,\nabla,V)$ on $M_0$ determine the map $L_{B_1}^{2(T-t_1)}$.
Furthermore, for open $\Gamma \subset \p M$,
$L_{\Gamma, B_0}^{2T}$ and the structure $(g,E,\nabla,V)$ on $M_0$
determine the map $L_{\Gamma, B_1}^{2T- t_1}$.
\end{Lemma}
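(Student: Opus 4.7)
The plan is to mirror the three-step structure of the proof of Lemma \ref{lem_local_move_boundary}, with the interior open set $B_0 \subset M_0$ now playing the role that the boundary piece $\Gamma$ played there. The key observation is that, since the structure $(g, E, \nabla, V)$ is known on all of $M_0$, knowledge of a solution on the open set $B_0$ is just as useful for interior unique continuation as Cauchy data on a piece of $\p M$: Theorem \ref{th_uniq_cont} is applied with $M$ replaced by $M_0 \setminus \overline{B_0}$ and $\Gamma$ replaced by the interior hypersurface $\p B_0$, exactly as in the proof of Lemma \ref{lem_local_reconstruction}. The geometric constant $t_1 = s + \e - \e_0$ bounds $d_g(y, \p B_0)$ for every $y \in B_1$, and it is this ``detour time'' that each use of unique continuation will consume from the available time interval.

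For the second (easier) assertion I would fix $f \in C_0^\infty((0, 2T - t_1) \times \Gamma; E)$ and let $u$ solve (\ref{eq:wave}). From $L_{\Gamma, B_0}^{2T} f$ the restriction $u|_{(0, 2T) \times B_0}$ is known, and together with the known coefficients on $M_0$ this singles out the set of smooth $\tilde u$ solving $(\p_t^2 + P + V) \tilde u = 0$ on $(-\infty, 2T) \times M_0$ with $\tilde u$ vanishing for $t < 0$ and agreeing with $u$ on $(-\infty, 2T) \times B_0$. Then $w = \tilde u - u$ has vanishing Cauchy data on $(-\infty, 2T) \times \p B_0$ and satisfies the wave equation on $M_0$. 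Applying Theorem \ref{th_uniq_cont} with suitable time translations (as in Lemma \ref{lem_local_move_boundary}) and using $d_g(y, \p B_0) \le t_1$ for $y \in B_1$ gives $w = 0$ on $(0, 2T - t_1) \times B_1$, so $u|_{(0, 2T - t_1) \times B_1}$, i.e.\ $L_{\Gamma, B_1}^{2T - t_1} f$, is determined.

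For the first assertion I would carry out three steps, in direct analogy with Lemma \ref{lem_local_move_boundary}. Step 1 is the unique continuation argument just described, applied to interior sources $F \in C_0^\infty((0, 2T - t_1) \times B_0; E)$ in (\ref{eq_wave_inter}), which yields the map $L_{B_0, B_1}^{2T - t_1} : F \mapsto u|_{(0, 2T - t_1) \times B_1}$. Step 2 is transposition: taking the formal adjoint of $L_{B_0, B_1}^{2T - t_1}$ and conjugating by time reversal on $(0, 2T - t_1)$ gives, via the same Green's identity computation as in Lemma \ref{lem_local_move_boundary} (with the $\p M$ boundary terms vanishing because both $u$ and the dual test function have zero Dirichlet data there), the transposed map
\[
L_{B_1, B_0}^{2T - t_1} : F \mapsto u|_{(0, 2T - t_1) \times B_0}, \quad F \in C_0^\infty((0, 2T - t_1) \times B_1; E).
\]
Step 3: since $t_1 > 0$ one has $2(T - t_1) < 2T - t_1$, so one may restrict Step 2 to $F \in C_0^\infty((0, 2(T - t_1)) \times B_1; E)$; a second application of interior unique continuation in $M_0$, now using the known $u|_{(0, 2T - t_1) \times B_0}$ to propagate $u$ back to $B_1$, consumes another $t_1$ of time and produces $u|_{(0, 2(T - t_1)) \times B_1}$, which is precisely $L_{B_1}^{2(T - t_1)} F$.

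The main technical point that requires care is the bookkeeping of time intervals: each invocation of Theorem \ref{th_uniq_cont} consumes a band of width $t_1$, and the standing assumption $\e < T - s + \e_0$ is exactly what ensures $t_1 < T$, so that $2(T - t_1) > 0$ and both reconstructed maps are defined on nondegenerate intervals. A secondary subtlety is that Theorem \ref{th_uniq_cont} is stated for $\Gamma \subset \p M$; the interior version needed here, with $\p B_0 \subset M^\inter$, is accessed exactly as in Lemma \ref{lem_local_reconstruction} by regarding $M_0 \setminus \overline{B_0}$ as a manifold whose boundary contains $\p B_0$.
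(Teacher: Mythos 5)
Your proof is correct, but it is organized differently from the paper's. The paper's proof of this lemma is a two-line reduction: it first observes (citing the argument in the proof of Lemma \ref{lem_local_reconstruction}) that $L_{B_0}^{2T}$ determines the Dirichlet-to-Neumann map $\Lambda_{\p B_0}^{2T}$ of the manifold $M \setminus B_0$ with the new boundary component $\p B_0$, notes that $d_g(x,\p B_0) = s - \e_0$ so that the $t_0$ of Lemma \ref{lem_local_move_boundary} equals $t_1$, and then invokes Lemma \ref{lem_local_move_boundary} verbatim with $\Gamma = \p B_0$; the claim about $L_{\Gamma, B_1}^{2T-t_1}$ is handled, as in your second paragraph, by a separate unique continuation. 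You instead never construct $\Lambda_{\p B_0}^{2T}$: you re-run the internal three-step mechanism of Lemma \ref{lem_local_move_boundary} (unique continuation, transposition via Green's identity and time reversal, unique continuation again) directly with the open set $B_0$ as the source/receiver region. Both routes are sound and consume the same total time budget $2t_1$ with $t_1 = s+\e-\e_0$; yours avoids the slightly fiddly extraction of the DN map on $\p B_0$ from $L_{B_0}^{2T}$ (the argument with the set $\mathbb F_f$ of extended sources), at the price of re-deriving the transposition identity and the second continuation rather than citing them. One small point of care in your Step 1 and Step 3: since the source $F$ is supported in $B_0$ (resp.\ may meet $B_0$ when $B_1$ and $B_0$ overlap), the comparison functions $\tilde u$ must be taken to solve the \emph{inhomogeneous} equation with the known right-hand side $F$, or equivalently one works on $M_0\setminus\overline{B_0}$ where the equation is homogeneous and the Cauchy data on $\p B_0$ is read off from $u|_{B_0}$; your text for the $\Gamma$-claim uses the homogeneous version, and the intended modification for interior sources should be stated explicitly, exactly as the paper does in the second half of the proof of Lemma \ref{lem_local_move_boundary}.
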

\begin{proof}
By the proof of Lemma \ref{lem_local_reconstruction}, $L_{B_0}^{2T}$
determines $\Lambda^{2T}_{\p B_0}$. 
It holds that $d_g(x, \p B_0)=s-\e_0$, and Lemma \ref{lem_local_move_boundary}
shows that $\Lambda^{2T}_{\p B}$ and the structure $(g,E,\nabla,V)$ on $M_0$ determine $L_{B_1}^{2(T-t_1)}$.
Finally, $L_{\Gamma, B_0}^{2T}$ determines $L_{\Gamma, B_1}^{2T- t_1}$
by a unique continuation argument similar to that in the proof of Lemma \ref{lem_local_move_boundary}.
\end{proof}

\subsection{Gluing local reconstructions in the interior}
\label{sec_gluing}

In this section we show the following theorem:
\begin{Theorem}
Let $\S \subset \p M$ be open and suppose that
\begin{align}
\label{assumption_T}
T > \max_{x \in M} d_g(x, \S).
\end{align}
Then the Hermitian vector bundle $E|_\S$ 
and the restricted Dirichlet-to-Neumann operator $\Lambda_\S^{2T}$
determine the smooth manifold $M^\inter$ and the structure $(g,E,\nabla,V)$ on $M^\inter$.
\label{thm:maint_interior}
\end{Theorem}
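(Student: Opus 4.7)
The strategy is to iterate the local reconstruction of Section 3 inwards, propagating the data from $\S$ to balls inside $M^\inter$ by means of the continuation lemmas of Section 4.1. The base step applies Theorems \ref{th_local_metric} and \ref{th_local_connection} with $\Gamma = \S$, producing an isomorphic copy of $(g, E, \nabla, V)$ on the semigeodesic strip $M_\S^T$. In general, this does not exhaust $M^\inter$, since points beyond the $\S$-cut locus are not covered, so we must extend the reconstruction further.

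The inductive step proceeds as follows. Suppose we have already recovered $(g, E, \nabla, V)$ on an open set $\Omega \subset M^\inter$, and that we have, in the intrinsic coordinates of $\Omega$, the maps $L_{B}^{2T'}$ and $L_{\S, B}^{2T'}$ for some ball $B \subset \Omega$ and some $T' > 0$. Choose a point $x \in \Omega$ close to $\p \Omega$, and pick a small ball $B_1 = B(x, \e)$ with $\e < \hbox{inj}_x$ and $\e$ smaller than the relevant margins in Lemma \ref{lem_local_move}. That lemma transports both measurement maps to $B_1$, yielding $L_{B_1}^{2T''}$ and $L_{\S, B_1}^{2T''}$ with $T'' = T' - t_1 > 0$. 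Then Lemma \ref{lem_local_reconstruction} reconstructs the structure on $M_{x}^{T''+\e}$, producing a new patch strictly enlarging the covered region. Iterating this finitely many times (using the compactness of $M$), we cover all of $M^\inter$ by patches.

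The covering is feasible because of the sharp-time hypothesis $T > \max_{x \in M} d_g(x, \S)$. Indeed, for each $x \in M^\inter$ there is a minimizing curve from $\S$ to $x$ of length strictly less than $T$, and one can propagate along this curve by a finite chain of balls whose successive time-losses $t_i$ accumulate to at most $d_g(x,\S)$. Thus the remaining time budget at every step of the chain is bounded below by the strictly positive slack $T - d_g(x,\S)$, which keeps the local reconstructions in Lemma \ref{lem_local_reconstruction} non-degenerate.

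The main obstacle is gluing. Each patch is reconstructed only up to the gauge equivalence of Theorem \ref{thm:maint}, so on overlapping patches the metrics, bundles and frames are a priori unrelated. The key observation is that the maps $L_{\S, B_k}^{T_k}$, which we propagated alongside the $L_{B_k}^{T_k}$, give the representation of every section $Wh$, $h \in C_0^\infty((0, 2T) \times \S; E)$, in the frame of the $k$-th patch. Since the $Wh$ are abstract sections of the actual bundle $E$ on $M$, their two representations on the intersection of two patches must differ by a smooth $U(n)$-valued transition function, which in turn determines a diffeomorphism between the underlying pieces of manifold (via how $Wh(x)$ varies with $x$, compare Lemma \ref{lem_basis}) together with a compatible bundle isomorphism intertwining the Hermitian structures and, by construction, the connection and potential. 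Lemma \ref{lem_smoothness_test} ensures that the sections and transition functions produced in this way are smooth. Assembling the patches via these transition functions yields a smooth manifold $\tilde M$ equipped with a Hermitian bundle $\tilde E$, connection $\tilde \nabla$ and potential $\tilde V$, together with a diffeomorphism from $\tilde M$ onto $M^\inter$ covered by a bundle isomorphism that pulls $(g, E, \nabla, V)$ back to $(\tilde g, \tilde E, \tilde \nabla, \tilde V)$, proving the theorem.
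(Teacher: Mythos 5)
Your overall architecture is the same as the paper's: local reconstruction near $\S$, propagation of the data into the interior by unique continuation (Lemmas \ref{lem_local_move_boundary}, \ref{lem_local_reconstruction}, \ref{lem_local_move}), a cover of $M^\inter$ by patches reached along paths of length strictly less than $T$, and a gluing driven by the known patch-representations of the waves $Wh$. One small slip first: Theorem \ref{th_local_connection} requires $E|_\Gamma$ to be trivial, so you cannot take $\Gamma = \S$ in the base step; you must cover $\S$ by small trivializing coordinate neighbourhoods $\Gamma$ and run the entire iteration separately from each of them (which is also why the gluing must be performed across patches coming from different boundary pieces, not only across successive interior balls along one chain).

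The genuine gap is in the gluing paragraph, which asserts the crux rather than proving it. You propose to identify $x_1$ in one patch with $x_2$ in another when the families $\{Wh(x_1)\}_h$ and $\{Wh(x_2)\}_h$ are intertwined by a fixed unitary. For this to define the \emph{correct} equivalence relation you must exclude that two distinct points of $M$ have their $Wh$-values related by some unitary for every $h$; this needs a two-point strengthening of Lemma \ref{lem_basis}, namely that $h \mapsto (Wh(x_1), Wh(x_2))$ has range spanning $E_{x_1}\oplus E_{x_2}$, which in turn requires running the unique continuation argument with $\phi = e_1\delta_{x_1}+e_2\delta_{x_2}$. You give no such argument. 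The paper avoids pointwise values altogether: it tests whether bump sections $h_1,h_2$ supported in small balls of the two patches satisfy $\langle h_1-h_2, Wf\rangle_{L^2(M;E)}=0$ for all $f$, which by the approximate controllability of Lemma \ref{lem_density} forces $h_1=h_2$ in $L^2(M;E)$ and hence separates points supported in disjoint balls; these inner products are computable from the data by Lemma \ref{lem_gluing_seq} together with Corollary \ref{cor_test_convergence}. Beyond the criterion itself, you also skip the verification that the quotient of the disjoint union of patches is a Hausdorff smooth manifold carrying a smooth Hermitian bundle with smoothly glued connection and potential (the content of (G1)--(G8)); Hausdorffness in particular rests on separating distinct points by disjoint patch-balls, which is part of the covering construction and does not follow from merely "assembling the patches via transition functions".
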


Up to this point we have avoided writing all the isomorphisms explicitly, but in this section the distinction between different representations is crucial.
Let us choose an open cover $\mathcal G_\S$ of $\S$
consisting of small enough sets $\Gamma \subset \S$  so that 
each $\Gamma$ is a coordinate neighborhood in $\p M$ and that
the vector bundle $E|_\Gamma$ is trivial.
Then we may choose an open set $Y_\Gamma \subset \R^{m-1}$ and a unitary trivialization 
\begin{align}
\label{triv_on_Gamma}
\xymatrix{
        E \ar[r]^{\phi_\Gamma} \ar[d] & Y_\Gamma \times \C^n \ar[d] \\
        \Gamma \ar[r]_{\psi_\Gamma}       & Y_\Gamma }
\end{align}  
By a unitary trivialization we mean that the diagram (\ref{triv_on_Gamma}) commutes, $\phi_\Gamma$ is a smooth bijection that is linear in fibers, and that the Hermitian structure is preserved, that is, $\phi_\Gamma^* \pair{\cdot, \cdot}_{\C^n} = \pair{\cdot, \cdot}_{E}$.

Starting from the representation of $\Lambda_\Gamma^{2T}$
on the trivialization (\ref{triv_on_Gamma}), the local reconstruction method in Section 3 
determines the cut distance $\sigma_\Gamma^T : \Gamma \to (0,T)$,
a metric tensor $g_\Gamma$ on $X_\Gamma^T$, and a connection $\nabla_\Gamma$ and potential $V_\Gamma$ on $X_\Gamma^T \times \C^n$, 
such that there is a unitary trivialization
\begin{align}
\label{triv_on_M_Gamma}
\xymatrix{
        E \ar[r]^{\tilde \Phi_\Gamma} \ar[d] & X_\Gamma^T \times \C^n \ar[d] \\
        M_{\Gamma}^T \ar[r]_{\tilde \Psi_\Gamma}       & X_\Gamma^T }
\end{align}  
satisfying $g = \tilde \Psi_\Gamma^* g_\Gamma$, $\nabla = \tilde \Phi_\Gamma^* \nabla_\Gamma$ and $V = \tilde \Phi_\Gamma^* V_\Gamma$.
Here 
$$
X_\Gamma^T = \{(s,y) \in \R^m;\ s \in [0, \sigma_\Gamma^T \circ \psi_\Gamma^{-1}(y)),\ y \in Y_\Gamma\}
$$
is the representation of $M_\Gamma^T$ in boundary normal coordinates,
and the restriction of $\tilde \Phi_\Gamma$ on the vector bundle $E|_\Gamma$ 
coincides with $\phi_\Gamma$.
We recall that $\sigma_\Gamma^T$ is defined by 
(\ref{31.1}).

We will next iterate the procedure in Section \ref{sec_redatuming}.
The initial step is the following:
\begin{itemize}
\item[1.] Given $\Lambda_\Gamma^{2T}$ and a representation of the structure $(g, E, \nabla, V)$ on $M_{\Gamma}^T$,
that is, $g_\Gamma$, $X_\Gamma^T \times \C^n$, $\nabla_\Gamma$ and $V_\Gamma$, 
we choose $(s_0,y_0) \in X_\Gamma^T$ 
and $\epsilon_0 > 0$
such that 
\begin{align}
\label{B_first}
B_0 = B(z_0, \epsilon_0) \subset M_\Gamma^{T} \cap M^\inter,
\end{align}
where  $z_0 = \tilde \Psi_\Gamma^{-1}(s_0,y_0) \in M_\Gamma^T$.
\end{itemize}
We invoke Lemma \ref{lem_local_move_boundary} to reconstruct the representations of 
$L^{2T-t_0}_{\Gamma, B_0}$ and $L_{B_0}^{2(T-t_0)}$ on the trivialization (\ref{triv_on_M_Gamma}). 
Here 
\begin{align}
\label{iteration_t0}
t_0=s_0+\e_0,
\end{align}
and we emphasize that we do not know the point $z_0 \in M$, only its representation $(s_0,y_0)$ in the boundary normal coordinates.

We iterate Lemmas \ref{lem_local_reconstruction} and \ref{lem_local_move} as follows:
\begin{itemize}
\item[2.] Given a representation of $L_{B_j}^{2(T-t_j)}$, where $B_{j}=B(z_{j}, \epsilon_{j})$,
we determine a representation of the structure 
$(g,E,\nabla,V)$
on the set $M_{j}=M_{z_{j}}^{T - t_{j}+\e_j}$.
%We use normal coordinates centered at $z_{j-1}$ on $M_{j-1}$.
\item[3.] We choose $s_{j+1} > 0$, $\xi_{j+1} \in S_{z_j} M$
and $\e_{j+1} > 0$ such that 
$$
B_{j+1} = B(z_{j+1}, \e_{j+1}) \subset M_j,
$$
where $z_{j+1} = \gamma(s_{j+1}; z_j, \xi_{j+1})$.
Again, we do not know $z_{j+1}$,
only its representation $(s_{j+1}, \xi_{j+1})$ in normal coordinates at $z_j$.
Given representations of $L_{B_{j}}^{2(T - t_{j})}$
and $L_{\Gamma,B_{j}}^{2T-t_{j}}$,
we determine representations of $L_{B_{j+1}}^{2(T - t_{j+1})}$
and $L_{\Gamma,B_{j}}^{2T-t_{j+1}}$,
where
\begin{align}
\label{iteration_tj}
t_{j+1} = t_{j} + s_{j+1} + \epsilon_{j+1} -\e_{j}.
\end{align}
\end{itemize}
We terminate the iteration after repeating the steps 2 and 3 a finite number of times denoted by $N = 0, 1, 2,\dots$. 
Note that we must satisfy the condition $t_j < T$ in each step of the iteration.

If $N=0$ then we do not need to satisfy the constraint (\ref{B_first}).
That is, we can use Lemma \ref{lem_local_move_boundary}
to reconstruct a representation of 
$L^{2T-t_0}_{\Gamma, B_0 \cap M_\Gamma^T}$
where $B_0 = B(z_0, \e_0)$, $z_0 \in M_\Gamma^T$
and $\e_0 \in (0,T-s_0)$.
In particular, for $y_0 \in \Gamma$ and for small enough $\e_0 > 0$ we can reconstruct a representation of 
$L^{2T-\e_0}_{\Gamma, C_0}$
where 
\begin{align} \label{11.5}
&C_0=\{\gamma(s;y,\nu);\ s \in (0, \e_0),\, 
y \in B_{\p}(y_0, \e_0) \},
\end{align}
and $B_{\p}(y_0, \e_0) = 
\{y \in \p M;\ d_g(y, y_0) < \e_0 \}$.

There are is a lot freedom in our iteration process. Namely, we can choose $N$, the points $z_j$ and the radii 
$\epsilon_j$ freely within the constraints of the iteration. 
%Also the choices of normal coordinates around the points $z_j$ are arbitrary. 
Let $A_\Gamma$ denote the set of all choices that are allowed within the constraints of iteration
 when starting from $\Gamma \in \mathcal G_\S$. 
We define also the disjoint union 
$
A = \bigsqcup_{\Gamma \in \mathcal G_\S} A_\Gamma$.

We denote by $B_\alpha = B_{N(\alpha)}$
the set chosen in the last invocation of step 3 
in the iteration process $\a\in A_\Gamma$,
and use analogous notation for other chosen quantities. 
The iteration gives us a metric tensor $g_\alpha$, a connection $\nabla_\alpha$ and a potential $V_\alpha$ such that there is 
a unitary trivialization
\begin{align}
\label{triv_on_B_alpha}
\xymatrix{
        E \ar[r]^{\tilde \Phi_\alpha} \ar[d] & X_\alpha \times \C^n \ar[d] \\
        B_\alpha \ar[r]_{\tilde \Psi_\alpha}       & X_\alpha }
\end{align}  
satisfying $g = \tilde \Psi_\alpha^* g_\alpha$, 
$\nabla = \tilde \Phi_\alpha^* \nabla_\alpha$ 
and $V = \tilde \Phi_\alpha^* V_\alpha$.
Here $X_\alpha$ is the open ball 
of radius $\e_{N(\a)}$
in $\R^m$ with center at the origin,
and $\tilde \Psi_\alpha$ gives normal coordinates at $z_{N(\a)}$.
The iteration gives also the representation $L_\alpha$ of $L_{\Gamma, B_\alpha}^{2T-t_{N(\a)}}$
on the trivialization (\ref{triv_on_B_alpha}).

If the iteration is terminated immediately after the initial step (that is, $N(\alpha)=0$) we allow $B_\alpha$
to be also of the form (\ref{11.5}).

Let us show that the balls $B_\alpha$, $\alpha \in A_\Gamma$, cover $M(\Gamma, T)^\inter$
and that they separate points:
\begin{itemize}
\item[(G1)] For all distinct $z, z' \in M(\Gamma, T)^\inter$ there are $\alpha, \beta \in A_\Gamma$ such that $z \in B_\alpha$, $z' \in B_\beta$ and $B_\alpha \cap B_\beta = \emptyset$. 
\end{itemize}
\begin{proof}
Let $z \in M(\Gamma, T)^\inter$.
Then there is a shortest path $\gamma$ from $\bar \Gamma$ to $z$
having length strictly less than $T$.
The path $\gamma$ can be perturbed to get a broken geodesic $\tilde \gamma$ from $y \in \Gamma$ to $z$ having 
length strictly less than $T$.
Moreover, $\tilde \gamma$ can be chosen so that it intersects $\p M$
only at its starting point $y$.
Then the points $z_j$, $j=1,\dots,N$, can be chosen along $\tilde \gamma$.
Moreover, when $z_0$ is close to $\Gamma$ and the radius $\epsilon_N$ is chosen small enough, we have $t_N < T$. 
Indeed, by (\ref{iteration_t0}) and (\ref{iteration_tj}),
$$
t_N = \e_N + s_0 + \sum_{j=1}^{N} s_j,
%\quad \text{where} \quad \delta = d_g(z_0, \Gamma) + \sum_{j=0}^{N} \epsilon_j.
$$
where $s_0=d_g(z_0, \Gamma)$ and $s_j=d_g(z_j, z_{j-1})$.
In particular, the balls $B_\alpha$, $\alpha \in A$,
form an open cover of $M(\Gamma, T)^\inter$.

Let $z' \in M(\Gamma, T)^\inter$ and suppose that $z' \ne z$.
We may choose the radius $\epsilon_N$ small enough so that $\epsilon_N < d_g(z,z')/2$,
and perform an analogous construction for $z'$.
This gives us disjoint balls as claimed.
\end{proof}

Note that the assumption (\ref{assumption_T})
does not imply that $M(\Gamma, T) = M$
since $\Gamma$ might be smaller than $\S$.
However, it implies that the sets $M(\Gamma, T)^\inter$, $\Gamma \in \mathcal G_\S$,
form an open cover of $M^\inter$, and therefore
the sets $B_\alpha$, $\alpha \in A$, form an open cover of $M^\inter$ by (G1).
We will show next how to glue together the local representations  
of $(g,E,\nabla,V)$ on the sets $B_\alpha$, $\alpha \in A$.
%This is based on the observation that using the maps $L_\alpha$, $\alpha \in A_\Gamma$, 
%we can compute the inner products
%$$
%\pair{Wf, w}_{L^2(M;E)}, \quad f \in C_0^\infty((0,2T) \times \Gamma;E),\ w \in C_0^\infty(B_\alpha; E).
%$$

\begin{Lemma}
\label{lem_gluing_seq}
Let $T > 0$, 
$\Gamma \subset \p M$ be open, and suppose that $B \subset M^\inter$ is open and satisfies
$B \subset M(\Gamma, T)$.
Let $h \in C_0^\infty(B;E)$ and $s \in (0,T)$.
Then the maps $\Lambda_\Gamma^{2T}$ and $L_{\Gamma,B}^{2T-s}$ together with the structure $(g,E)$ on $B$  
determine the non-empty set 
\begin{align}
\label{set_conv_w}
\{
(f_j)_{j=1}^\infty \subset C_0^\infty((0,2T) \times \Gamma; E);\ \text{$\lim_{j \to \infty} W f_j = h$
in $L^2(M;E)$}\}.
\end{align}
\end{Lemma}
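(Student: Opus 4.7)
The plan is to verify that the set (\ref{set_conv_w}) is non-empty and that one can algorithmically decide whether a given candidate sequence $(f_j)$ belongs to it. For non-emptiness, I would extend $h$ by zero to obtain a section in $L^2(M(\Gamma, T); E)$, which is legitimate since $\supp h \subset B \subset M(\Gamma, T)$. Lemma \ref{lem_density} applied with $r = T$ then supplies a sequence $(f_j) \subset C_0^\infty((0, T) \times \Gamma; E) \subset C_0^\infty((0, 2T) \times \Gamma; E)$ with $W f_j \to h$ in $L^2(M; E)$.

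For determinability, I would expand
$$\|W f_j - h\|_{L^2(M; E)}^2 = \|W f_j\|_{L^2(M; E)}^2 - 2\,\mathrm{Re}\,\pair{W f_j, h}_{L^2(M; E)} + \|h\|_{L^2(B; E)}^2.$$
The first term is determined by $\Lambda_\Gamma^{2T}$ via Corollary \ref{cor_test_convergence}, and the third term is computable from $h$ together with the given Hermitian structure on $B$. Since $\supp h \subset B$, the middle inner product reduces to $\int_B \pair{W f_j, h}_E \, dx$, so it is accessible as soon as we have access to the values of $W f_j$ on $B$.

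The main obstacle, and really the only one, is recovering $W f_j|_B$ from $L_{\Gamma, B}^{2T-s}$, because $f_j$ is allowed to be supported up to $t = 2T$, beyond the temporal window $(0, 2T-s)$ of $L_{\Gamma, B}^{2T-s}$. I would resolve this by a cutoff–plus–causality argument: the hypothesis $s < T$ gives $T < 2T - s$, so I would pick $\chi \in C_0^\infty((0, 2T - s))$ with $\chi \equiv 1$ on $[0, T]$ and set $\tilde f_j = \chi f_j \in C_0^\infty((0, 2T-s) \times \Gamma; E)$. Since $f_j$ and $\tilde f_j$ coincide on $(0, T) \times \Gamma$, the difference $u^{f_j} - u^{\tilde f_j}$ solves (\ref{eq_wave_initialbd}) on $[0, T] \times M$ with vanishing source, boundary, and initial data; uniqueness forces $u^{f_j}(T) = u^{\tilde f_j}(T)$. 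Consequently
$$W f_j|_B = L_{\Gamma, B}^{2T-s}(\tilde f_j)\big|_{t=T},$$
which is computable from the given data. With all three terms of the expansion in hand, convergence $W f_j \to h$ in $L^2(M; E)$ is testable, and the lemma follows.
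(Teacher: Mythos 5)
Your proposal is correct and follows essentially the same route as the paper: expand $\norm{Wf_j-h}_{L^2(M;E)}^2$, read off the first term from $\Lambda_\Gamma^{2T}$ via Corollary \ref{cor_test_convergence}, the cross term from $L_{\Gamma,B}^{2T-s}$ and $(g,E)$ on $B$, the last term from $(g,E)$ on $B$, and get non-emptiness from Lemma \ref{lem_density} with $r=T$. Your cutoff-plus-causality step, which extracts $Wf_j|_B$ from $L_{\Gamma,B}^{2T-s}$ even though $f_j$ may be supported up to $t=2T$, is a correct justification of a point the paper leaves implicit (just take $\chi=1$ on $[\delta,T]$ with $f_j$ vanishing for $t<\delta$, since a cutoff cannot literally equal $1$ at $t=0$ while lying in $C_0^\infty((0,2T-s))$).
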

\begin{proof}
We expand the squared norm
\begin{align*}%\label{}
&\norm{W f_j - h}_{L^2(M;E)}^2
\\&\qquad= \pair{W f_j, W f_j}_{L^2(M;E)} 
- 2 \mathop{Re} \pair{W f_j, h}_{L^2(M;E)} 
+ \pair{h, h}_{L^2(M;E)}, 
\end{align*}
and observe that $\Lambda_\Gamma^{2T}$
determines the first term on the right-hand side by Corollary \ref{cor_test_convergence},
$L_{\Gamma,B}^{2T-s}$ and $(g,E)$ on $B$ determine the second term,
and $(g,E)$ on $B$ determines the third term. 
To conclude we observe that Lemma \ref{lem_density} implies 
that the set (\ref{set_conv_w}) is non-empty.
\end{proof}

\begin{Lemma}
\label{lem_gluing}
Suppose that open $\S \subset \p M$ and $T > 0$
satisfy (\ref{assumption_T}).
Let $x_1, x_2 \in M^\inter$.
We have $x_1 = x_2$ if and only if for all sufficiently small $\epsilon > 0$ and any 
$h_1 \in C_0^\infty(B(x_1, \epsilon);E)$ 
there is $h_2 \in C_0^\infty(B(x_2, \epsilon);E)$
such that 
\begin{align}
\label{eq_gluing}
\langle h_1 - h_2, Wf \rangle_{L^2(M;E)} = 0, \quad f \in C_0^\infty((0,2T) \times \S;E).	
\end{align}
\end{Lemma}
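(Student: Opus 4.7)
The plan is to prove both directions directly, using the approximate controllability result of Lemma \ref{lem_density} to reduce the orthogonality condition (\ref{eq_gluing}) to an $L^2$ identity.

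For the forward implication, if $x_1 = x_2$ then $B(x_1,\epsilon) = B(x_2,\epsilon)$, so for any $h_1 \in C_0^\infty(B(x_1,\epsilon);E)$ we may simply take $h_2 = h_1$, making $h_1 - h_2 = 0$ and (\ref{eq_gluing}) trivially true.

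For the converse, I would argue by contrapositive: assume $x_1 \ne x_2$ and produce an $h_1$ for which no $h_2$ works. Choose $\epsilon < d_g(x_1,x_2)/2$, so that the balls $B(x_1,\epsilon)$ and $B(x_2,\epsilon)$ are disjoint, and pick any nonzero $h_1 \in C_0^\infty(B(x_1,\epsilon);E)$. For an arbitrary $h_2 \in C_0^\infty(B(x_2,\epsilon);E)$, the disjoint supports force $h_1 - h_2 \ne 0$ in $L^2(M;E)$. The key step is then to note that by the assumption $T > \max_{x \in M} d_g(x,\S)$, we have $M(\S,T) = M$, so Lemma \ref{lem_density} (applied with $\Gamma = \S$ and $r = T$) yields that
\[
\{Wf;\ f \in C_0^\infty((0,T) \times \S;E)\}
\]
is dense in $L^2(M;E)$. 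Since this set is contained in $\{Wf;\ f \in C_0^\infty((0,2T) \times \S;E)\}$, the latter is also dense in $L^2(M;E)$. Hence (\ref{eq_gluing}) would force $h_1 - h_2 = 0$, contradicting $h_1 \ne 0$ and $\supp(h_1) \cap \supp(h_2) = \emptyset$. Thus no $h_2$ can satisfy (\ref{eq_gluing}), completing the contrapositive.

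The argument is essentially a direct consequence of the global approximate controllability guaranteed by the sharp time condition, so there is no serious obstacle: the only point requiring care is to confirm that $M(\S,T) = M$ under the hypothesis on $T$, and that enlarging the time interval from $(0,T)$ to $(0,2T)$ only increases the class of admissible sources, preserving the density in $L^2(M;E)$.
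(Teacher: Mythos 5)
Your proof is correct and follows essentially the same route as the paper: the forward direction is trivial, and the converse is handled by taking disjoint balls around $x_1 \ne x_2$, noting $h_1 - h_2 \ne 0$, and invoking the approximate controllability of Lemma \ref{lem_density} (with $M(\S,T)=M$ under (\ref{assumption_T})) to produce an $f$ with $\langle h_1-h_2, Wf\rangle \ne 0$. Your writeup merely makes explicit the density step that the paper leaves implicit.
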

\begin{proof}
Let us suppose that $x_1 \ne x_2$.
We choose small enough $\epsilon > 0$ so that the balls 
$B(x_j, \epsilon)$, $j=1,2$, are disjoint.
We choose non-zero $h_1\in C_0^\infty(B(x_1, \epsilon);E)$ and let 
$h_2 \in C_0^\infty(B(x_2, \epsilon);E)$ be arbitrary. 
Then $h_1 \ne h_2$ and 
Lemma \ref{lem_density} implies that there is $f \in C_0^\infty((0,2T) \times \S;E)$
satisfying 
$$\langle h_1 - h_2, Wf \rangle_{L^2(M;E)} \ne 0.$$
The other implication is trivial.
\end{proof}

\def\X{\mathcal X}
\def\U{\mathcal U}
Lemmas \ref{lem_gluing_seq} and \ref{lem_gluing} allow us to determine if
two points $x_i \in X_{\alpha_i}$, 
$\alpha_i \in A_{\Gamma_i}$, $\Gamma_i \in \mathcal G_\S$,
$i=1,2$, satisfy 
\begin{align}
\label{base_point_equivalence}
\tilde \Psi_{\alpha_1}^{-1}(x_1) = \tilde \Psi_{\alpha_2}^{-1}(x_2).
\end{align}
Indeed, let $\epsilon > 0$ be small, let 
$\tilde B_i$, $i=1,2$, be
the geodesic ball in $(X_{\alpha_i}, g_{\alpha_i})$ 
with center $x_i$ and radius $\epsilon$,
and let $\tilde h_i \in C_0^\infty(\tilde B_i;E)$.
Then using Lemma \ref{lem_gluing_seq}, 
we can find sequences 
$(f_j^i)_{j=1}^\infty \subset C_0^\infty((0,2T) \times \Gamma_i; E)$
such that $\lim_{j \to \infty} W f_j^i = h_i$ where $h_i = \Phi_{\alpha_i}^* \tilde h_i$.
Note that in order to apply Lemma \ref{lem_gluing_seq}
it is enough to know  $\Lambda_\Gamma^{2T}$ and 
the representations $L_{\alpha_i}$ and $g_{\alpha_i}$, $i=1,2$.
By Corollary \ref{cor_test_convergence}, we can compute 
\begin{align}
\label{eq_gluing_pullback}
\lim_{j \to \infty} \langle W f_j^1 - W f_j^2, Wf \rangle_{L^2(M;E)}
= \langle  h_1 -  h_2, Wf \rangle_{L^2(M;E)},
\end{align}
for all $f \in C_0^\infty((0,2T) \times \S;E)$.
Hence we can use (\ref{eq_gluing}) to determine if (\ref{base_point_equivalence}) holds. 

The equation (\ref{base_point_equivalence}) gives
an equivalence relation on the disjoint union $\tilde{\X} = \bigsqcup_{\alpha \in A} X_\alpha$ and we denote by $\mathcal X$ and $q : \tilde{\X} \to \X$
the corresponding quotient space and 
the canonical map. 
Moreover, we define the set $\U_\alpha = q(X_\alpha) \subset \X$ and the restriction $q_\alpha = q|_{X_\alpha}$, $\alpha \in A$.
We will show that $\X$ is a smooth manifold:
\begin{itemize}
\item[(G2)] The maps $q_\alpha : X_\alpha \to \U_\alpha$ are bijective,
and there is a unique Hausdorff topology and a complete atlas on $\X$
such that each $q_\alpha^{-1}$ is a coordinate system.
\end{itemize}
As we can determine if $x$ and $x'$ are equivalent given the data $\Lambda_\Gamma^{2T}$, we see that the smooth structure of $\X$ is determined. 
Let us show (G2) simultaneously with the following:
\begin{itemize}
\item[(G3)] Let us define a map $\Psi : M^\inter \to \X$
by 
$\Psi(z) = q \circ \tilde \Psi_\alpha(z)$ when $z \in B_\alpha$.
Then $\Psi$ is a well-defined diffeomorphism. 
%and $\Psi|_{B_\alpha} = q_\alpha \circ \tilde \Psi_\alpha$.
\end{itemize}
\begin{proof}[Proof of (G2) and (G3).]
Let $z \in M^\inter$. Then (G1) implies that there is $\alpha \in A$
such that $z \in B_\alpha$. 
If $z \in B_\beta$ also for $\beta \in A$, 
then $q(x) = q(x')$ where $x = \tilde \Psi_\alpha(z)$ and $x' = \tilde \Psi_\beta(z)$. Thus $\Psi$ is well-defined.

Note that the sets $\U_\alpha$ cover $\X$ since the sets $X_\alpha = \tilde \Psi_\alpha(B_\alpha)$ cover $\tilde \X$.
This implies that $\Psi$ is surjective.
Suppose that $\Psi(z) = \Psi(z')$ for some $z \in B_\alpha$ and $z' \in B_\beta$.
Then $q(x) = q(x')$ where $x = \tilde \Psi_\alpha(z)$ and $x' = \tilde \Psi_\beta(z)$. Thus $z = z'$ by the definition of $q$,
and we have shown that $\Psi$ is injective.

We define $\Psi_\alpha : B_\alpha \to \U_\alpha$ as the restiction $\Psi_\alpha  = \Psi|_{B_\alpha}$. It is clearly bijective.
Now $\Psi_\alpha = q_\alpha \circ \tilde \Psi_\alpha$ 
implies that $q_\alpha = \Psi_\alpha \circ \tilde \Psi_\alpha^{-1}$.
Hence the maps $q_\alpha$ are bijective. Moreover, 
if $\U = \U_\alpha \cap \U_\beta \ne \emptyset$ then we have on $q_\alpha^{-1}(\U)$ that 
$$q_\beta^{-1} \circ q_\alpha = \tilde \Psi_\beta \circ \Psi^{-1} \circ \Psi \circ \tilde \Psi_\alpha^{-1} = \tilde \Psi_\beta \circ \tilde \Psi_\alpha^{-1},$$
and we see that $q_\beta^{-1} \circ q_\alpha$ is smooth
on the open set $q_\alpha^{-1}(\U) = \tilde \Psi_\alpha(B_\alpha \cap B_\beta)$.
We have shown that the conditions (1) and (2) of \cite[Prop. 1.42]{ONeill1983}
hold. To finish the proof of (G2) we need only to verify the separation condition (3)
in \cite[Prop. 1.42]{ONeill1983}. 

Let $p, p' \in \X$ be distinct. Then we have $z \ne z'$
where $z = \Psi^{-1}(p)$ and $z' = \Psi^{-1}(p')$.
Let $\alpha,\beta \in A$ be as in (G1).
Then $\U_\alpha$ and $\U_\beta$ are disjoint sets
containing $p$ and $p'$ respectively,
since $\U_\alpha = \Psi(B_\alpha)$ and $\U_\beta = \Psi(B_\beta)$.
Now (G2) follows from \cite[Prop. 1.42]{ONeill1983}.

To show that $\Psi$ is smooth, it is enough to show that each
$q_\alpha^{-1} \circ \Psi \circ \tilde \Psi_\alpha^{-1}$
is smooth. But this is simply the identity map on $X_\alpha$.
%The analogous statement holds for $\Psi^{-1}$. 
\end{proof}

Let us show that the metric tensors $g_\alpha$
can be glued together:
\begin{itemize}
\item[(G4)] We have $(q_\alpha^{-1})^* g_\alpha = (\Psi^{-1})^* g$ 
on each $\U_\alpha$.
\end{itemize}
\begin{proof}
We recall that $g = \tilde \Psi_\alpha^* g_\alpha$ on $B_\alpha$.
Thus we have on $\U_\alpha$ that 
$$(\Psi^{-1})^* g = (\tilde \Psi_\alpha \circ \Psi^{-1})^* g_\alpha
= (\tilde \Psi_\alpha \circ \Psi_\alpha^{-1})^* g_\alpha
= (q_\alpha^{-1})^* g_\alpha.$$
\end{proof}

Let us now turn to gluing of the vector bundles $X_\alpha \times \C^n$. 
Denote by 
$\E^\alpha = (e_\ell^\alpha)_{\ell=1}^n$ the constant frame on $X_\alpha \times \C^n$ corresponding to the standard basis of $\C^n$.
%Note that the orthonormality is with respect to the standard inner product on $\C^n$,
%since the local reconstruction step gives trivializations (\ref{triv_on_B_alpha}) that are unitary in this sense. 
Suppose that $\U_\alpha$ and $\U_\beta$ intersect 
for some indices $\alpha, \beta \in A$, and write 
$$
X_{\alpha \beta} = q_\alpha^{-1}(\U_\alpha \cap \U_\beta),
\quad
X_{\beta \alpha} = q_\beta^{-1}(\U_\alpha \cap \U_\beta).
$$
We define functions $h_1 = \Phi_\alpha^* \tilde h_1$
and $h_2 = \Phi_\beta^* \tilde h_2$,
where 
$$
\tilde h_1 = 1_{X_{\alpha \beta}} e_\ell^\alpha \in L^2(X_\alpha; \C^n),
\quad 
\tilde h_2 = 1_{X_{\beta \alpha}} a_\ell^\kappa e_\kappa^\beta
\in L^2(X_\beta; \C^n).
$$
Here $\ell, \kappa = 1, \dots, n$ and 
$a_\ell^\kappa \in C^\infty(X_{\beta \alpha})$. 
Analogously to the considerations
preceeding (\ref{eq_gluing_pullback}),
we can choose two sequences of sources 
$(f^i_j)_{j=1}^\infty,\, i=1,2$, such
that $(W f^i_j)_{j=1}^\infty$ converges to $h_i$,
and determine if (\ref{eq_gluing}) holds.
Suppose now that we have chosen $a_\ell^\kappa \in C^\infty(X_{\beta \alpha})$
so that (\ref{eq_gluing}) holds.
We define $U_{\beta \alpha} = (a_\ell^\kappa)_{\kappa,\ell=1}^n$ on $X_{\beta  \alpha}$. 
Moreover, we define an equivalence relation %$(x,\xi) \sim (x',\xi')$ 
on $\tilde \X \times \C^n$ by
\begin{align}
\label{fiber_equivalence}
q(x) = q(x'), \quad \xi' = U_{\beta \alpha}(x') \xi,
\end{align}
where $x \in X_\alpha$, $x' \in X_{\beta}$ and $\xi, \xi' \in \C^n$.
We have:
\begin{itemize}
\item[(G5)] 
The equations (\ref{fiber_equivalence}) hold if and only if
$\tilde \Phi_\alpha^{-1}(x,\xi) = \tilde \Phi_\beta^{-1}(x',\xi')$.
\end{itemize}
\begin{proof}
Observe that $x \in X_\alpha$, $x' \in X_{\beta}$ and $q(x) = q(x')$ imply that 
$x' \in X_{\beta  \alpha}$.
Therefore, the second equation in (\ref{fiber_equivalence})
is well-defined whenever the first one holds. 

We write $B = B_\alpha \cap B_\beta$.
Let $Z \in \pi_E^{-1}(B)$ where $\pi_E : E|_{M^\inter} \to M^\inter$
is the bundle projection, and take $z = \pi_E(Z)$.
Moreover, denote by $Z_p = (Z_p^\ell)_{\ell=1}^n$
the representation of $Z$ in the frame $\tilde \Phi_p^* e_\ell^p$,
$p = \alpha, \beta$. Then, since $h_1$ and $h_2$ are smooth
in $B$ and satisfy (\ref{eq_gluing}),
Lemma \ref{lem_density} implies that
$$
Z = Z_\alpha^\ell \tilde \Phi_\alpha^* e_\ell^\alpha|_z
= Z_\alpha^\ell \tilde \Phi_\beta^* (a_\ell^\kappa e_\kappa^\beta)|_z
= Z_\alpha^\ell a_\ell^\kappa(\tilde \Psi_\beta(z)) \tilde \Phi_\beta^* e_\kappa^\beta|_z.
$$
Hence
$Z_\beta = U_{\beta \alpha}(\tilde \Psi_\beta(z)) Z_\alpha$.

Suppose that (\ref{fiber_equivalence}) holds, and define $Z = \tilde \Phi_\alpha^{-1}(x,\xi)$. Then $Z \in \pi_E^{-1}(B)$ and we have, using the above notation $z = \pi_E(Z)$ and  $Z_p = (Z_p^\ell)_{\ell=1}^n$, $p = \alpha, \beta$, that
$\tilde \Psi_\alpha(z) = x$ and $Z_\alpha = \xi$.
Moreover,
$\tilde \Phi_\beta(Z) = (\tilde \Psi_\beta(z), Z_\beta)$
where $\tilde \Psi_\beta(z) = x'$ as $q(x) = q(x')$,
and 
$$
Z_\beta = U_{\beta \alpha}(\tilde \Psi_\beta(z)) Z_\alpha = U_{\beta \alpha}(x') \xi = \xi'.
$$
On the other hand, if $Z = \tilde \Phi_\alpha^{-1}(x,\xi) = \tilde \Phi_\beta^{-1}(x',\xi')$, then $q(x) = q(x')$ and 
$$\xi' = Z_\beta = U_{\beta \alpha}(\tilde \Psi_\beta(z)) Z_\alpha =
U_{\beta \alpha}(x') \xi.$$
\end{proof}

We denote by $F$ the quotient space with respect to the equivalence 
(\ref{fiber_equivalence}) and by $Q : \tilde \X \times \C^n \to F$
the corresponding canonical map.
Moreover, we define 
\begin{align}
\label{11.2}
\pi_F : F \to \X:\,
\pi_F(Q(x,\xi)) = q(x), \quad (x,\xi) \in \tilde X \times \C^n,
\end{align}
and $Q_\alpha$ as 
the restriction of $Q$ on $X_\alpha \times \C^n$, $\alpha \in A$.
These maps define a smooth vector bundle structure:
\begin{itemize}
\item[(G6)] The map $\pi_F$ is a well-defined surjection and the maps $$Q_\alpha : X_\alpha \times \C^n \to \pi_F^{-1}(\U_\alpha)$$ are bijective.
There is a unique Hausdorff topology and a complete atlas on $F$
such that each $Q_\alpha^{-1}$ is a coordinate system.
The maps $\xi \mapsto Q_\alpha(x,\xi)$ are bijective from $\C^n$
to $\pi_F^{-1}(\{q(x)\})$ for $x \in X_\alpha$ and $\alpha \in A$,
and, if the fibers $\pi_F^{-1}(\{p\})$, $p \in \X$,
are equipped with the vector space structure that is pulled back from $\C^n$
via the inverses of these maps, then
$\pi_F : F \to \X$ is a smooth vector bundle that is trivial on each $\U_\alpha$.
\end{itemize}
Let us show (G6) simultaneously with the following:
\begin{itemize}
\item[(G7)] Let us define a map $\Phi : E|_{M^\inter} \to F$
by 
$\Phi(Z) = Q \circ \tilde \Phi_\alpha(Z)$ when $Z \in \pi_E^{-1}(B_\alpha)$.
Here $\pi_E$ is the bundle projection $E|_{M^\inter} \to M^\inter$.
Then $\Phi$ is a well-defined vector bundle isomorphism covering $\Psi$.
\end{itemize}
\begin{proof}[Proof of (G6) and (G7).]
Clearly $\pi_F$ is a well-defined surjection. A proof that $\Phi$
is a well-defined bijection is essentially identical with the above proof that
$\Psi$ is a well-defined bijection, and we omit it.

Let $\alpha \in A$, $x \in X_\alpha$, and consider the map $Q_{\alpha}^x(\xi) = Q_\alpha(x,\xi)$.
The definition of $\pi_F$ implies that $Q_{\alpha}^x : \C^n \to F^x$
where $F^x = \pi_F^{-1}(\{q(x)\})$.
Let us show that $Q_{\alpha}^x$ is surjective.
Let $\beta \in A$ and $x' \in X_\beta $ satisfy $q(x') = q(x)$ and let $\xi' \in \C^n$.
Then, if we choose $\xi = U_{\beta\alpha}(x')^{-1} \xi'$,
we have
$Q_\beta^{x'}(\xi') = Q_\alpha^x(\xi)$ due to (\ref{fiber_equivalence}).
Thus $Q_\alpha^x$ is surjective.
The surjectivity implies that 
$$
Q(X_\alpha \times \C^n) = \bigcup_{x \in X_\alpha} Q_{\alpha}^x(\C^n)
= \pi^{-1}_F(q(X_\alpha)) = \pi_F^{-1}(\U_\alpha).
$$

We write $E_\alpha = \pi_E^{-1}(B_\alpha)$, $F_\alpha = \pi_F^{-1}(\U_\alpha)$, and define $\Phi_\alpha = \Phi|_{E_\alpha}$.
The sets 
$$
\Phi(E_\alpha)
% = Q(\tilde \Phi_\alpha(E_\alpha)) 
= Q(X_\alpha \times \C^n) = F_\alpha, \quad \alpha \in A,
$$ 
cover $F$, and $\Phi_\alpha : E_\alpha \to F_\alpha$ is bijective.
The factorization $\Phi_\alpha = Q_\alpha \circ \tilde \Phi_\alpha$
implies that $Q_\alpha$ is bijective, and
$Q_\beta^{-1} \circ Q_\alpha = \tilde \Phi_\beta \circ \tilde \Phi_\alpha^{-1}$
is smooth on the open set $Q_\alpha^{-1}(F_\alpha \cap F_\beta) = \tilde \Phi_\alpha(E_\alpha \cap E_\beta)$.

Let $p, p' \in F$, and define $z = \pi_E \circ \Phi^{-1}(p)$
and  $z' = \pi_E \circ \Phi^{-1}(p')$.
If $z \ne z'$ then we may choose $\alpha,\beta \in A$ as in (G1).
Then $E_\alpha$ and $E_\beta$ are disjoint,
whence $F_\alpha$ and $F_\beta$ are disjoint sets
containing $p$ and $p'$ respectively.
On the other hand, if $z = z'$ then there is $\alpha \in A$
such that $p,p' \in F_\alpha$.
Now \cite[Prop. 1.42]{ONeill1983} implies that $F$ has a unique smooth manifold structure.

To show that $\pi_F$ is smooth, it is enough to show that each
$q_\alpha^{-1} \circ \pi_F \circ Q_\alpha$
is smooth. But this is simply the map $\pi_\alpha : X_\alpha \times \C^n \to X_\alpha$, $\pi_\alpha(x,\xi) = x$.
A proof that $\Phi$
is smooth is essentially identical with the above proof that
$\Psi$ is smooth, and we omit it.

We define a vector space structure on $F^x$ by 
pulling back the addition and scalar multiplication via $(Q_\alpha^x)^{-1} : F^x \to \C^n$. That is,
$$
Q_\alpha^x(\xi) + c Q_\alpha^x(\eta)
= Q_\alpha^x(\xi + c \eta), \quad \xi, \eta \in \C^n\, c \in \C.
$$
Let us show that this does not depend on the choice of $x' \in q^{-1}(\{x\})$.
Suppose that $F^x = F^{x'}$ for some $\beta \in A$ and $x' \in X_\beta$,
and let $\xi',\eta' \in \C^n$.
We choose 
$\xi = U_{\beta\alpha}(x')^{-1} \xi'$
and $\eta = U_{\beta\alpha}(x')^{-1} \eta'$.
Then it holds that $Q_\beta^{x'}(\xi') = Q_\alpha^x(\xi)$, $Q_\beta^{x'}(\eta') = Q_\alpha^x(\eta)$ and $Q_\beta^{x'}(\xi' + c \eta') = Q_\alpha^x(\xi + c\eta)$
for all $c \in C$.

Next let us construct local trivializations for $F$.
We define $$\rho : \tilde \X \times \C^n \to \X \times \C^n$$ by 
$\rho = q \otimes id$, that is, $\rho(x,\xi) = (q(x), \xi)$, and set $\rho_\alpha = \rho \circ Q_\alpha^{-1}$.
Then $\rho_\alpha : F_\alpha \to \U_\alpha \times \C^n$ is a smooth bijection since 
$(q_\alpha^{-1} \otimes id) \circ \rho_\alpha \circ Q_\alpha$ is the identity on 
$X_\alpha \times \C^n$. 
Moreover, $\pi_F \circ \rho_\alpha^{-1}$ is the identity on $\U_\alpha$,
and, for $x \in X_\a$, the map $\xi \mapsto \rho_\alpha^{-1}(q(x), \xi)$
is $Q_\alpha^x$.
Thus the maps $\rho_\alpha^{-1}$, $\alpha \in A$, 
give local trivializations for $F$,
and $\pi_F : F \to \X$ is a smooth vector bundle.

Let us show that $\Phi$ is a vector bundle homomorphism. 
We recall that $q_\alpha = \Psi \circ \tilde \Psi_\alpha^{-1}$, 
$q_\alpha^{-1} \circ \pi_F \circ Q_\alpha = \pi_\alpha$ and $Q_\a= \Phi_\a \circ \widetilde \Phi^{-1}_\a$,
where $\pi_\alpha$ is the projection on right in (\ref{triv_on_B_alpha}).
thus, we have 
\begin{align} \label{11.3}
q_\alpha^{-1} \circ \pi_F \circ \Phi \circ \tilde \Phi_\alpha^{-1}
= q_\alpha^{-1} \circ \pi_F \circ Q_\alpha = \pi_\alpha,
\end{align}
and, as the diagram (\ref{triv_on_B_alpha}) commutes, we have also
\begin{align} \label{11.4}
q_\alpha^{-1} \circ \Psi \circ \pi_E \circ \tilde \Phi_\alpha^{-1}
= q_\alpha^{-1} \circ \Psi \circ \tilde \Psi_\alpha^{-1} \circ \pi_\alpha
= \pi_\alpha.
\end{align}
Thus $\pi_F \circ \Phi = \Psi \circ \pi_E$.
Let $\alpha \in A$, $z \in B_\alpha$. Then $\Phi$ is linear from the fiber $\pi_E^{-1}(\{z\})$ to 
the fiber $\pi_F^{-1}(\{\Psi(z)\})$, since 
$(Q_\alpha^x)^{-1}$,  $\Theta(\xi)^x = \tilde \Phi_\alpha^{-1}(x, \xi)$ and $(Q_\alpha^x)^{-1} \circ \Phi \circ \Theta = id$
are linear where
 $x = \tilde \Psi_\alpha(z)$ and the last equation follows from (\ref{11.3}) and (\ref{11.4}).
Hence $\Phi$ is a vector bundle homomorphism. As it is bijective, it is a vector bundle isomorphism.
\end{proof}

The connections $\nabla_\alpha$, potentials $V_\a$
and the Hermitian structures 
can be glued together:
\begin{itemize}
\item[(G8)] 
On each $\pi_F^{-1}(\U_\alpha)$,
$(Q_\alpha^{-1})^* \nabla_\alpha = (\Phi^{-1})^* \nabla$, $(Q_\alpha^{-1})^* V_\alpha = (\Phi^{-1})^* V$
and $(Q_\alpha^{-1})^* \pair{\cdot, \cdot}_{\C^n} = (\Phi^{-1})^* \pair{\cdot, \cdot}_{E}$.
\end{itemize}
A proof is essentially identical with the proof of (G4) and we omit it.

To summarize, we have shown that the following diagram 
\begin{align*}
\xymatrix{
        E \ar[r]^{\Phi} \ar[d] & F \ar[d] \\
        M^\inter \ar[r]_{\Psi}       & \X }
\end{align*}  
gives an isomorphism of the structure $(g, E, \nabla,V)$ on $M^\inter$ when $\X$ is equipped with the metric tensor given by the gluing (G4) and $F$ is equipped with the connection, the potential
and the Hermitian structure given by the gluing (G8). 
This concludes the proof of Theorem \ref{thm:maint_interior}.

Let us show that $\Phi$ extends to the accessible part $\S$ of the boundary.
If $\alpha \in A_\Gamma$, $\Gamma \in \mathcal G_\S$, corresponds to an iteration that is terminated 
immediately after the initial step, 
then we can use 
$B_\alpha = C_0$, where $C_0$ is of the form (\ref{11.5})
%$B \cap M^\inter$, where $B$ is as in (\ref{B_first}),
and  
$\tilde \Phi_\alpha = \tilde \Phi_\Gamma|_{C_0}$.
Thus $Q_\alpha^{-1} \circ \Phi|_{B_\a} = \tilde \Phi_\Gamma|_{C_0}$ extends to 
$C_0 \cup B_\p(y_0, \e_0)$ and  
\begin{align}
\label{Phi_extension_to_Gamma}
Q_\alpha^{-1} \circ \Phi = \phi_\Gamma,
\quad \text{on $B_\p(y_0, \e_0)$}.
\end{align}

\subsection{Extension to the inaccessible part of boundary}

We will give a non-constructive proof that the structure
$(g,E,\nabla,V)$ is determined up to the boundary, and this will conclude the proof of Theorem 
\ref{thm:maint}.
To this end, let 
$(M_i, g_i, E_i, \nabla_i,V_i)$, $i = 1,2$,
be two structures as in Theorem 
\ref{thm:maint}.
Let $\S_i \subset \p M_i$ be open and nonempty, 
and suppose that there is an isomorphism between the induced Hermitian vector bundles
on $\S_i$, $i =1,2$,
$$
\xymatrix{
        E_1 \ar[r]^\phi \ar[d] & E_2 \ar[d] \\
        \S_1 \ar[r]_{\psi}       & \S_2 }
$$
Note that we do not assume {\it a priori}  that $\psi$ is an isometry.
%this follows from the proof of Corollary \ref{cor_test_convergence}.}

Let us choose an open cover $\mathcal G_{\S_1}$ of $\S_1$ as in the proof of Theorem \ref{thm:maint_interior}. Then for each $\Gamma_1 \in \mathcal G_{\S_1}$ there is a unitary trivialization 
\begin{align*}
%\label{triv_on_Gamma}
\xymatrix{
        E_1 \ar[r]^{\phi_{\Gamma_1}} \ar[d] & Y_{\Gamma_1} \times \C^n \ar[d] \\
        \Gamma_1 \ar[r]_{\psi_{\Gamma_1}}       & Y_{\Gamma_1} }
\end{align*}  
We define $\Gamma_2 = \psi(\Gamma_1)$ and $\phi_{\Gamma_2} = \phi_{\Gamma_1} \circ \phi^{-1}$.
Then $\phi_{\Gamma_2} : E_2|_{\Gamma_2} \to Y_{\Gamma_1} \times \C^n$ is a unitary trivialization,
and, if $\phi$ intertwines the maps $\Lambda_{\S_1}^{2T}$ and $\Lambda_{\S_2}^{2T}$, then their representations on the respective trivializations coincide.

Theorem \ref{thm:maint_interior} implies that there is 
a Hermitian vector bundle $F \to \X$, that is equipped with a Hermitian connection $\tilde \nabla$ and a potential $\tilde V$, and whose base manifold $\X$ is equipped with a Riemannian metric $\tilde g$, such that,
for both $i=1,2$, there is an Hermitian vector bundle isomorphism
$\Phi_i : E_i|_{M_i^\inter} \to F$,
covering an isometry $\Psi_i : M_i^\inter \to \X$,
such that $\nabla_i = \Phi_i^* \tilde \nabla$ 
and $V_i = \Phi_i^* \tilde V$.
Hence $\Phi_2^{-1} \circ \Phi_1$ gives an isomorphims between the 
structures $(g_i, E_i, \nabla_i,V_i)$ on $M_i^\inter$, $i=1,2$.

It follows from \cite{Palais1957} that $\Psi = \Psi_2^{-1} \circ \Psi_1$ extends smoothly to the boundary $\p M_1$
and $(M_i, g_i)$, $i=1,2$, are isometric via the extended $\Psi$.
By considering the pullback bundle $\Psi^* E_2$, we can assume without loss of generality that $M_1 = M_2$.
Thus the following proposition implies that also 
the bundle isomorphism $\Phi = \Phi_2^{-1} \circ \Phi_1$ extends smoothly to the boundary.

\begin{Proposition} 
Let $E_i \to M$, $i=1,2$, be two Hermitian vector bundles over a smooth manifold with boundary $\p M$, and let $\nabla_i$
be a Hermitian connection on $E_i$, $i=1,2$.
Suppose that the exists a Hermitian vector bundle isomorphism $\Phi$
between $E_{1}|_{M^\inter}$ and $E_{2}|_{M^\inter}$ such that it 
covers the identity and 
that $\Phi^*\nabla_{2}=\nabla_{1}$ on $M^\inter$.
 Then $\Phi$ extends smoothly to $\partial E_1$ and the bundles and connections are isomorphic on $M$ via the extended $\Phi$.
\end{Proposition}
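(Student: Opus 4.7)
The plan is to work locally near a boundary point and use parallel transport along inward normal geodesics to produce smooth frames that diagonalize the situation. The key point is that a connection-preserving isomorphism sends parallel sections to parallel sections, so the matrix representing $\Phi$ in suitably parallel frames becomes constant along the direction transverse to $\partial M$, which forces the smooth extension.

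Concretely, fix $p\in \partial M$, and work in boundary normal coordinates $(s,y)\in[0,\varepsilon)\times U$ with $U\subset\partial M$ a small coordinate patch over which both bundles $E_1,E_2$ admit smooth unitary frames $e_1^{(i)},\dots,e_n^{(i)}$ on $U$, $i=1,2$. Define frames $\tilde e_\ell^{(i)}(s,y)$ on $[0,\varepsilon)\times U$ by parallel transport of $e_\ell^{(i)}(y)$ with respect to $\nabla_i$ along the normal geodesic $s\mapsto(s,y)$. These frames are smooth (solutions of a smooth ODE), unitary at $s=0$ and remain unitary in $s$ since $\nabla_i$ is compatible with the Hermitian structure; moreover $\nabla_{i,\partial_s}\tilde e_\ell^{(i)}=0$ by construction.

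On $(0,\varepsilon)\times U$, where $\Phi$ is defined and smooth, write
\[
\Phi\bigl(\tilde e_\ell^{(1)}(s,y)\bigr)=U_\ell^{\,k}(s,y)\,\tilde e_k^{(2)}(s,y).
\]
Unitarity of the two frames and the fact that $\Phi$ preserves Hermitian structure force $U(s,y)\in U(n)$. Since $\nabla_{1,\partial_s}\tilde e_\ell^{(1)}=0$ and $\Phi^*\nabla_2=\nabla_1$, we have $\nabla_{2,\partial_s}\Phi(\tilde e_\ell^{(1)})=0$; combined with $\nabla_{2,\partial_s}\tilde e_k^{(2)}=0$ this gives $\partial_s U_\ell^{\,k}=0$ on $(0,\varepsilon)\times U$. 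Hence $U$ is independent of $s$, and the single smooth function $y\mapsto U(y):=U(s,y)$ (smooth in $y$ because $\Phi$ is smooth in the interior) is defined on all of $U$, taking values in $U(n)$.

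This allows us to extend $\Phi$ to the boundary on the chart by setting $\bar\Phi(e_\ell^{(1)}(y)):=U_\ell^{\,k}(y)e_k^{(2)}(y)$ for $y\in U$; in the coordinates $(s,y)$ the formula $\bar\Phi(\tilde e_\ell^{(1)})=U_\ell^{\,k}(y)\tilde e_k^{(2)}$ is manifestly smooth up to $s=0$ and agrees with $\Phi$ for $s>0$. Local extensions agree on overlaps because they coincide on $M^{\text{int}}$, which is dense, and the glued map $\bar\Phi:E_1\to E_2$ is a smooth Hermitian vector bundle isomorphism covering the identity; the identity $\bar\Phi^*\nabla_2=\nabla_1$ extends by continuity. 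I do not foresee a real obstacle: the construction is essentially a one-line ODE argument, and the only thing to verify is that the parallel-transported frames are smooth up to and including the boundary, which is standard for smooth connections on a manifold with smooth boundary.
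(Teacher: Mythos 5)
Your proof is correct and is essentially the paper's argument in different notation: the paper's gauge transformations $u_A,u_B$ solving the normal parallel-transport ODE are exactly your change to $\nabla_i$-parallel frames along the normal curves, and the matrix $v=u_A^{-1}uu_B$ with $dv(\partial_{x^1})=0$ is your $s$-independent matrix $U$. Both proofs then extend by constancy in the normal direction, so no further comment is needed.
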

\begin{proof} Fix a point $x\in \partial M$ and introduce coordinates 
$$(x^{1},\dots,x^m)\in W:=[0,\varepsilon)\times (-\varepsilon,\varepsilon)^{m-1}$$ around $x$ such that
the boundary of $M$ is given by $x^{1}=0$. Without loss of generality we may assume that the bundles $E_1$ and $E_2$ are trivial over these coordinates and that $\nabla_{1}=d+A$, $\nabla_{2}=d+B$. The bundle isomorphism $\Phi$ can be represented by a smooth $U(n)$-valued function $u(x^{1},\dots,x^{m})$ defined for $x^{1}>0$ and such
that
\[B=u^{-1}du+u^{-1}Au.\]
Consider the smooth map $u_{A}:W\to U(n)$ uniquely defined by solving the following parallel transport equation along the curves $x^{1}\mapsto (x^{1},\dots,x^{m})$:
\begin{align*}
&\frac{du_{A}}{dx^{1}}+A_{(x^{1},\dots,x^{m})}(
\partial_{x^{1}})u_{A}=0,\\
&u_{A}(0,x^{2},\dots,x^{m})=Id.\\
\end{align*}
Consider a similar map $u_{B}:W\to U(n)$ associated to $B$. These two maps are convenient because, if we set
\[
\tilde{A}=u_{A}^{-1}du_{A}+u_{A}^{-1}Au_{A},\quad \tilde{B}=u_{B}^{-1}du_{B}+u_{B}^{-1}Bu_{B},
\]
then $\tilde{A}(\partial_{x^{1}})=\tilde{B}(\partial_{x^{1}})=0$. For $x^{1}>0$ define
$v=u_{A}^{-1}u u_{B}.$
Then, a simple calculation shows that 
\[\tilde{B}=v^{-1}dv+v^{-1}\tilde{A}v, \quad x^{1}>0.\]
This implies $dv(\partial_{x^{1}})=0$ and the map $v$ is independent of $x^{1}$.
Hence $v$ smoothly extends to $x^{1}=0$ and, since $u=u_{A}vu_{B}^{-1}$, $u$ is also smooth up to the boundary $x_1=0$.
\end{proof}

In order to finish the proof of Theorem \ref{thm:maint}
we still need to show that $\Phi|_{\S_1} = \phi$. 
Using the coordinate systems $Q_\alpha^{-1}$ on $F$
corresponding to choices $\alpha$ as in (\ref{Phi_extension_to_Gamma}), 
we see that $\Phi_i = \phi_{\Gamma_i}$ on $\Gamma_i$.
Thus 
$$
\Phi = \Phi_2^{-1} \circ \Phi_1 = \phi_{\Gamma_2}^{-1} \circ \phi_{\Gamma_1} = \phi
$$
on each $\Gamma_i \in \mathcal G_{\S_1}$.
This concludes the proof of Theorem \ref{thm:maint}.

\section{Calder\'on problem for connections on a cylinder} 

The proof of Corollary \ref{thm:ell} is based on a simple relation between the Dirichlet-to-Neumann map $\Lambda(\lambda)$ of the operator $-\partial_{t}^{2}+P_0 - \lambda$ and 
that of the transversal operator $P_{0}$ defined analogously to $\Lambda(\lambda)$.
That is, if $\lambda\in \mathbb{C}\setminus[\lambda_{1},\infty)$ then we define 
$$
\Lambda_0(\lambda) h = (\nabla_{0})_{\nu} u|_{\p M_0}, \quad h \in C^{\infty}(\partial M_0;E_0),
$$
where $u$ is the solution of the equation
\[(P_0-\lambda)u=0\;\text{in}\;C,\;\;\;\;\;u|_{\partial C}=f.\]

We consider an $L^2$-space with a weight in the Euclidean direction,
$$
L_\delta^2(C;E) = \{f \in L_{loc}^2(C;E);\ (1+t^2)^{\delta/2} f \in L^2(C;E) \}, \quad \delta \in \R,
$$
and define the corresponding Sobolev spaces $H^s_\delta$
analogously to \cite[Section 5]{Ferreira2013}.
Now we can formulate a relation between $\Lambda(\lambda)$
and $\Lambda_0(\lambda)$.

\begin{Proposition} Let $\lambda\in \mathbb{C}\setminus[\lambda_{1},\infty)$ and $\delta \in \R$.
Then $\Lambda(\lambda)$  extends as a bounded linear map
$\Lambda(\lambda):H_{\delta}^{3/2}(\partial C;E)\to H^{1/2}_{\delta}(\partial C;E)$.
Moreover, if $k\in\mathbb{R}$, then 
\[\Lambda_{0}(\lambda-k^2)h=e^{-kit}\Lambda(\lambda)(e^{ikt}h).\]
Note that if $h \in H^{3/2}(\p M_0; E_0)$, then $e^{ikt} h \in H_\delta^{3/2}(\p C;E)$
for any $\delta < -1/2$.
\label{prop:easy}
\end{Proposition}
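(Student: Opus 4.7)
The plan is to split the statement into the identity (second claim) and the weighted bounded extension (first claim), and to reduce everything to the partial Fourier transform in the translational variable $t$. I would begin with the identity, as it motivates the Fourier analysis and requires almost no machinery beyond the definitions.

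For $h \in C^\infty(\partial M_0; E_0)$ and $k \in \R$, set $\mu = \lambda - k^2$. Then $\Im\mu = \Im\lambda$ and $\Re\mu \leq \Re\lambda$, so $\mu \in \mathbb{C} \setminus [\lambda_1, \infty)$ as well. The transversal elliptic Dirichlet problem $(P_0 - \mu) u_0 = 0$ in $M_0$, $u_0|_{\partial M_0} = h$ therefore has a unique smooth solution $u_0 \in C^\infty(M_0; E_0)$. Setting $u(t,x) = e^{ikt} u_0(x)$ and computing directly gives $(-\partial_t^2 + P_0 - \lambda) u = e^{ikt}(P_0 - \mu) u_0 = 0$ in $C$, with $u|_{\partial C} = e^{ikt} h$. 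Since $u_0$ is continuous on the compact $M_0$, $u$ is bounded, so by uniqueness it \emph{is} the distinguished bounded solution. Because the metric $g = dt^2 + g_0$ is a product, the interior unit normal to $\partial C = \R \times \partial M_0$ coincides with that of $\partial M_0$; and because $\nabla = \pi^* \nabla_0$, the covariant derivative commutes with multiplication by $e^{ikt}$. Hence $\nabla_\nu u|_{\partial C}(t,y) = e^{ikt}(\nabla_0)_\nu u_0(y) = e^{ikt}\Lambda_0(\mu) h(y)$, and multiplying by $e^{-ikt}$ gives the identity on smooth $h$.

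For the bounded extension I would take the partial Fourier transform in $t$. The equation $(-\partial_t^2 + P_0 - \lambda) u = 0$ becomes the $\tau$-parametrized family $(P_0 - (\lambda - \tau^2)) \hat u(\tau, \cdot) = 0$, and by the same observation as above $\lambda - \tau^2 \in \mathbb{C} \setminus [\lambda_1, \infty)$ for every $\tau \in \R$. Parameter-dependent (semiclassical) elliptic theory then gives a Poisson operator $\mathcal P_0(\tau)$ satisfying
\[
\|\mathcal P_0(\tau) h\|_{H^2_\tau(M_0; E_0)} \leq C \|h\|_{H^{3/2}_\tau(\partial M_0; E_0)},
\]
uniformly in $\tau$, where $H^s_\tau$ denotes the Sobolev norm with $\xi$ replaced by $(\xi, \tau)$. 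Restricting to the boundary yields the uniform estimate $\|\Lambda_0(\lambda - \tau^2)\|_{H^{3/2}_\tau \to H^{1/2}_\tau} \leq C$. Since on $\R \times \partial M_0$ the full Sobolev norm $H^s(\partial C)$ is equivalent under Plancherel to $\int \|\hat f(\tau, \cdot)\|^2_{H^s_\tau(\partial M_0)}\,d\tau$, this establishes that $\widehat{\Lambda(\lambda) f}(\tau) = \Lambda_0(\lambda - \tau^2) \hat f(\tau)$ defines a bounded operator $H^{3/2}(\partial C; E) \to H^{1/2}(\partial C; E)$.

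To install the weight $(1+t^2)^{\delta/2}$, I would differentiate the resolvent identity in $\tau$ and apply the same elliptic estimates to conclude that the operator-valued symbol $\tau \mapsto \Lambda_0(\lambda - \tau^2)$ is classical (smooth with controlled derivatives). Conjugation by $(1+t^2)^{\delta/2}$ corresponds on the Fourier side to commutation with $(1 - \partial_\tau^2)^{\delta/2}$, which preserves boundedness by a standard Leibniz-type argument for symbol classes. The closing remark, that $h \in H^{3/2}(\partial M_0; E_0)$ implies $e^{ikt} h \in H^{3/2}_\delta(\partial C; E)$ for $\delta < -1/2$, is a direct Fubini computation: the weighted $L^2$ integral $\int (1+t^2)^\delta\,dt$ is finite exactly when $2\delta < -1$, and all $y$-derivatives of $e^{ikt}h$ are manifestly in $L^2_\delta(\R; L^2(\partial M_0))$ by the same bound.

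The only real obstacle I anticipate is the uniform-in-$\tau$ estimate: the parameter $\lambda - \tau^2$ escapes to infinity, and correspondingly $\Lambda_0(\lambda - \tau^2)$ grows in operator norm like $|\tau|$, but this growth is exactly the factor absorbed when replacing the ordinary Sobolev norms by the $\tau$-weighted norms $H^s_\tau$ on $\partial M_0$. Verifying this behaviour uniformly for the Hermitian vector-bundle setting is straightforward since the principal part of $P_0 + \tau^2 - \lambda$ is $-\Delta \otimes \mathrm{Id}$ to leading order, so the scalar semiclassical estimates transfer directly.
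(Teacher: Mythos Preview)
Your proof of the identity is essentially the paper's own argument: construct the separated solution $u(t,x)=e^{ikt}v_h(x)$ with $v_h$ solving the transversal Dirichlet problem for $P_0-(\lambda-k^2)$, observe that it lies in the right function class, and read off $\nabla_\nu u|_{\partial C}=e^{ikt}(\nabla_0)_\nu v_h|_{\partial M_0}$. For the bounded extension the paper simply refers to the scalar case \cite[Proposition~5.1]{Ferreira2013} and omits the details, so your Fourier-in-$t$ sketch with $\tau$-uniform transversal elliptic estimates is not in the paper but is precisely the argument behind that citation.
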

\begin{proof} 
The proof that $\Lambda(\lambda)$ extends as claimed
is analogous to the scalar case \cite[Proposition 5.1]{Ferreira2013}
and we omit it.
Let $h\in H^{3/2}(\partial M_{0};E_{0})$ and let $v_h\in H^{2}(M_{0};E_{0})$ solve
\[(P_{0}-(\lambda-k^2))v_h=0\;\text{in}\;M_{0},\;\;\;\;\;v_h|_{\partial M_{0}}=h.\]
Since $\lambda\notin [\lambda_{1},\infty)$, the number $\lambda-k^2$ is not a Dirichlet eigenvalue
of $P_{0}$ and there is a unique solution $v_h$. Set
$f(t,x)=e^{ikt}h(x)$ and $u(t,x)=e^{ikt}v_{h}(x)$.
The function $u$
is in $H_{\delta}^{2}(C;E)$ for any $\delta < -1/2$, and solves
\[(-\partial_{t}^{2}+P_0-\lambda)u=0\;\text{in}\;C,\;\;\;\;\;u|_{\partial C}=f.\]
Note that $-\p_t^2 + P_0 = \nabla^* \nabla$, where $\nabla = \pi^* \nabla_0$
and  $\pi : C \to M_0$ is the canonical projection.
It follows that
\[\Lambda(\lambda)f=\nabla_{\nu}u|_{\partial C}=e^{ikt}(\nabla_{0})_{\nu}v_{h}|_{\partial M_{0}}=e^{ikt}\Lambda_0(\lambda-k^{2})h,\]
and the proposition is proved.
\end{proof}

\begin{proof}[Proof of Corollary \ref{thm:ell}]
Using that $C_{0}^{\infty}(\partial C;E)$ is dense
in $H_{\delta}^{3/2}(\partial C;E)$ for all $\delta$
together with Proposition \ref{prop:easy}, 
we can determine the map
\[\Lambda_0(\lambda-k^2):H^{3/2}(\partial M_{0};E_{0})\to H^{1/2}(\partial M_{0};E_{0})\]
for all $k\in \mathbb{R}$.
Since $\mu\mapsto \Lambda_0(\mu)$ is a meromorphic map whose poles are contanied in $\{\lambda_{1},\lambda_{2}, \dots\}$, see e.g. \cite[Lemma 4.5]{Katchalov2001}, we can recover $\Lambda_0(\mu)$ for all $\mu\in\mathbb{C}$. This is equivalent to knowing the Dirichlet-to-Neumann map $\Lambda_{\p M_0}^T$
for the wave operator $\partial_{t}^{2}+P_{0}$ for any $T>0$ \cite[Chapter 4]{Katchalov2001}. Thus Theorem \ref{thm:maint} implies
that we can recover the structure $(M_{0},g_{0},E_{0},\nabla_{0})$ as claimed.
\end{proof}

\medskip

\noindent{\bf Acknowledgements.} 
The authors thank Sergei Ivanov, Matti Lassas, Jason Lotay and Lars Louder
for helpful discussions. The authors also express their gratitude to the Institut Henri Poincar\'e and the 
organizers of the program on Inverse problems in 2015 for providing an excellent research environment while 
part of this work was in progress.  YK was  partially supported by the EPSRC grant EP/L01937X/1
and CNRS,  LO by the EPSRC grant EP/L026473/1 and 
Fondation Sciences Math\'ema\-tiques de Paris, and GPP by the EPSRC grant EP/M023842/1 and CNRS.

\bibliographystyle{abbrv}
\bibliography{main}
\end{document}